\theoremstyle{thmstyletwo}%
\newtheorem{theorem}{Theorem}
\newtheorem{lemma}{Lemma}%
\newtheorem{assumption}{Assumption}
\newtheorem{remark}{Remark}%
\newtheorem{definition}{Definition}
\numberwithin{equation}{section}
\pgfplotsset{compat=newest}
\pgfplotsset{compat=newest}
\definecolor{color0}{rgb}{0.12156862745098,0.466666666666667,0.705882352941177}
\definecolor{color1}{rgb}{1,0.498039215686275,0.0549019607843137}
\definecolor{color2}{rgb}{0.172549019607843,0.627450980392157,0.172549019607843}
\definecolor{color3}{rgb}{0.83921568627451,0.152941176470588,0.156862745098039}
\definecolor{color4}{rgb}{0.580392156862745,0.403921568627451,0.741176470588235}
\definecolor{color5}{rgb}{0,0,0}
\definecolor{mycolor1}{rgb}{0.00000,0.44700,0.74100}
\definecolor{mycolor2}{rgb}{0.85000,0.32500,0.09800}
\definecolor{mycolor3}{rgb}{0.92900,0.69400,0.12500}
\definecolor{mycolor4}{rgb}{0.46600,0.67400,0.18800}
\definecolor{mycolor5}{rgb}{0.49400,0.18400,0.55600}
\newcommand{\lineWidth}{1.2pt}
\newcommand{\imageWidth}{2.0in}
\newcommand{\imageHeight}{1.8in}
\DeclareMathOperator*{\argmax}{arg\,max}
\newcommand{\N}{\ensuremath\mathbb{N}}
\newcommand{\R}{\ensuremath\mathbb{R}}
\newcommand{\ri}{\ensuremath\mathrm{i}}
\newcommand{\T}{\ensuremath\mathsf{T}}
\DeclareMathOperator{\ee}{e}
\DeclareMathOperator{\diag}{diag}
\DeclareMathOperator{\range}{im}
\DeclareMathOperator{\real}{Re}
\DeclareMathOperator{\imag}{Im}
\newcommand{\calD}{\mathcal{D}}
\newcommand{\calE}{\mathcal{E}}
\newcommand{\calG}{\mathcal{G}}
\newcommand{\calN}{\mathcal{N}}
\newcommand{\calO}{\mathcal{O}}
\newcommand{\calS}{\mathcal{S}}
\newcommand{\calU}{\mathcal{U}}
\newcommand{\calV}{\mathcal{V}}
\newcommand{\calX}{\mathcal{X}}
\newcommand{\calY}{\mathcal{Y}}
\definecolor{mycolor1}{rgb}{0.00000,0.44700,0.74100}
\definecolor{mycolor2}{rgb}{0.85000,0.32500,0.09800}
\definecolor{mycolor3}{rgb}{0.92900,0.69400,0.12500}
\definecolor{mycolor4}{rgb}{0.46600,0.67400,0.18800}
\definecolor{mycolor5}{rgb}{0.49400,0.18400,0.55600}
\newcommand{\abbr}[1]{\textsf{#1}\xspace}
\newcommand{\SCM}{\abbr{SCM}}
\newcommand{\PDE}{\abbr{PDE}}
\newcommand{\RBM}{\abbr{RBM}}
\newcommand{\FEM}{\abbr{FEM}}
\newcommand{\SSCM}{\abbr{SSCM}}
\newcommand{\GM}{\abbr{GM}}
\newcommand{\CT}{\abbr{CT}}
\newcommand{\LP}{\abbr{LP}}
\newcommand{\EIGOPT}{\abbr{EigOpt}}
\newcommand{\EIGTOOL}{\abbr{EigTool}}
\newcommand{\sn}{\mathsf{n}}
\newcommand{\sr}{\mathsf{r}}	
\newcommand{\sd}{\mathsf{d}}
\newcommand {\UB}{{\rm{UB}}}
\newcommand {\LB}{{\rm{LB}}}
\newcommand {\tmu}{{\tilde{\mu}}}
\newcommand {\dimPar}{p}
\newcommand{\lapVar}{z}
\newcommand{\PSm}{M}
\crefname{proposition}{Proposition}{Propositions}
\Crefname{proposition}{Proposition}{Propositions}
\crefname{lemma}{Lemma}{Lemmas}
\Crefname{lemma}{Lemma}{Lemmas}
\crefname{assumption}{Assumption}{Assumptions}
\Crefname{assumption}{Assumption}{Assumptions}
\crefname{remark}{Remark}{Remarks}
\Crefname{remark}{Remark}{Remarks}
\begin{document}

\DOI{DOI HERE}
\copyrightyear{2021}
\vol{00}
\pubyear{2021}
\access{Advance Access Publication Date: Day Month Year}
\appnotes{Paper}
\copyrightstatement{Published by Oxford University Press on behalf of the Institute of Mathematics and its Applications. All rights reserved.}
\firstpage{1}


\title[Uniform Approximation of Eigenproblems on a Domain]{Uniform Approximation of Eigenproblems of a Large-Scale Parameter-Dependent Hermitian Matrix}

\author{Mattia Manucci${}^{\ast}$
\address{\orgdiv{Stuttgart Center for Simulation Science (SC SimTech)}, \orgname{University of Stuttgart}, \orgaddress{\street{Universit\"{a}tsstr.~32}, \postcode{70569}, \state{Stuttgart}, \country{Germany}}}}
\author{Emre Mengi
\address{\orgdiv{Department of Mathematics}, \orgname{Ko\c{c} University}, \orgaddress{\street{Rumeli Feneri Yolu}, \postcode{34450}, Sar{\i}yer, \state{Istanbul}, \country{Turkey}}}}
\author{Nicola Guglielmi
\address{\orgdiv{Division of mathematics}, \orgname{Gran Sasso Science Institute}, \orgaddress{\street{Viale Francesco Crispi~7}, \postcode{67100}, \state{L'Aquila}, \country{Italy}}}}

\authormark{Mattia Manucci, Emre Mengi and Nicola Guglielmi}

\corresp[$\ast$]{Corresponding author: \href{mattia.manucci@simtech.uni-stuttgart.de}{mattia.manucci@simtech.uni-stuttgart.de}}

\received{Date}{0}{Year}
\revised{Date}{0}{Year}
\accepted{Date}{0}{Year}


\abstract{We consider the uniform approximation 
of the smallest eigenvalue of a large parameter-dependent 
Hermitian matrix by that of a smaller counterpart obtained through projections.
 The projection subspaces are constructed iteratively 
by means of a greedy strategy; at each iteration the parameter where a surrogate error
is maximal is computed and the eigenvectors associated with the smallest eigenvalues 
at the maximizing parameter value are added to the subspace. Unlike the classical approaches, such as the successive constraint method, that maximize 
such surrogate errors over a discrete and finite set, we maximize the surrogate error over 
the continuum of all permissible parameter values globally. We formally prove 
that the projected eigenvalue function converges to the actual eigenvalue function
uniformly. In the second part, we focus on the uniform approximation of the smallest singular value of 
a large parameter-dependent matrix, in case it is non-Hermitian. The proposed frameworks on 
numerical examples, including those arising from discretizations of parametric PDEs, reduce the 
size of the large matrix-valued function drastically, while retaining a high
accuracy over all permissible parameter values.}
\keywords{smallest eigenvalue; Hermitian matrix; parameter-dependent eigenvalue problem;
Hermite interpolation; large scale; subspace projection; uniform convergence;
successive constraint method; linear program.}

\maketitle

\section{Introduction and motivations}
\label{sec:intro}
We aim for an accurate uniform approximation of the smallest eigenvalue function of a large analytic 
Hermitian matrix-valued function by that of a smaller counterpart. Formally, 
given $A \, : \, \underline{\calD} \rightarrow {\mathbb C}^{\sn \times \sn}$ 
analytic\footnote{$A \, : \, \underline{\calD} \rightarrow {\mathbb C}^{\sn \times \sn}$ is analytic amounts
to the real analyticity of $\text{Re}(A) = (A + \overline{A})/2$ and $\text{Im}(A) = {\rm i} (\overline{A} - A)/2$.} Hermitian on $\underline{\calD} \subseteq \R^{\dimPar}$
and given an error tolerance $\varepsilon$, we want to find a subspace $\calV$ of ${\mathbb C}^n$ that satisfies 
\[
	\max_{\mu \in \calD} | \lambda_{\min}(\mu) - \lambda^{\calV}_{\min}(\mu) | 	\;	\leq	\;	 \varepsilon	\:	,
\]
where $\lambda_{\min}(\mu)$, $\lambda^{\calV}_{\min}(\mu)$ denote the smallest eigenvalues 
of $A(\mu)$, $V^\ast A(\mu) V$, respectively, $V$ is a matrix whose columns form an orthonormal basis for $\calV$,
and $\calD \subset \underline{\calD}$ is compact. It is preferable that $\calV$ is as small dimensional as possible.

The main motivation for this approximation problem 
arises from the estimation of the coercivity constant for a parametrized partial differential equation (\PDE); 
see, e.g., \cite{RozHP08}. In fact, the coercivity constant appears in the a posteriori error estimates used in the 
reduced basis method (\RBM) to numerically solve a parameterized \PDE~\cite{HesRS16}. For a given discretization 
method, such as the finite element method (\FEM), finite difference, or finite volume, if the corresponding discrete 
operator $A(\mu)$ is positive definite and Hermitian, then the role of the coercivity constant associated with the 
differential operator of the \PDE~in the continuous setting is played by the smallest eigenvalue of $A(\mu)$. 
Since numerical approximations of {\PDE}s usually lead to large problems, applying a
standard eigensolver, such as the Lanczos method \cite{BaiDDRV00}, may be computationally very expensive
and may not be suitable for computing the smallest eigenvalue for several values of $\mu$.

If the nature of the \PDE~is such that after discretization it does not lead to Hermitian matrices - for instance, this is the case in the presence of a convective term - then the coercivity constant 
may be replaced by the inf-sup stability constant, which after discretization corresponds to the 
smallest singular value of a general non-Hermitian matrix $A(\mu)$. Moreover, for non-Hermitian problems, the smallest eigenvalue of the negative Hermitian part of $A(\mu)$, i.e., $-(A(\mu) + A(\mu)^{\ast})/2$, provides a priori insights on the asymptotic stability of any reduced-order model obtained via Galerkin projection of $A(\mu)$; see \cite{Emb19}. Consequently, even when the original system matrix is not Hermitian, it is still valuable to consider their Hermitian components.

Parametric eigenvalue problems also appear in the context of quantum spin systems \cite{ParF10}, 
where the lowest energy of the system is the quantity of interest and corresponds to the smallest eigenvalue of the 
system Hamiltonian. Such systems generally have very large dimensionality because the size of the state space grows exponentially with the length of the chain under consideration, and determining the corresponding smallest eigenvalue for every $\mu$ in a prescribed set $\calD$ is essential for constructing the phase diagram of the system.

The remark below further makes the assumptions on the infinite-dimensional PDE problem explicit so that
the (continuous) infinite-dimensional parametrized PDE and (discrete) parameter-dependent matrix perspectives are consistent.

\begin{remark}\label{rm:motivate_inf_dim} \rm $\;$
\\
Let $\Omega \subset \mathbb{R}^d$ be a bounded domain and consider, as an illustrative example, the parametric diffusion operator
\[
\mathcal{L}_\mu u := -\nabla \cdot \big(f(\mu) \nabla u\big),
\]
with homogeneous Dirichlet boundary conditions, where the diffusion coefficient $f(\mu)$ is uniformly bounded above and below by positive constants for all admissible $\mu$. Under these assumptions, $\mathcal{L}_\mu$ is a self-adjoint, positive (unbounded) operator on $L^2(\Omega)$ and its inverse
\[
\mathcal{L}_\mu^{-1}: L^2(\Omega) \to L^2(\Omega),
\]
is compact because the embedding $H^1_0(\Omega) \hookrightarrow L^2(\Omega)$ is compact. 
Consequently, the spectral problem for $\mathcal{L}_\mu$ is equivalent to a compact eigenproblem for $\mathcal{L}_\mu^{-1}$; 
the eigenvalues of $\mathcal{L}_\mu$ form a discrete sequence
\[
0 < \lambda_1(\mu) \le \lambda_2(\mu) \le \dots, \quad \lambda_j(\mu) \to \infty,
\]
while the eigenvalues of $\mathcal{L}_\mu^{-1}$ accumulate at $0$. In particular, the coercivity constant $\alpha(\mu)$ in the variational formulation coincides with $\lambda_1(\mu)$, and approximating $\lambda_1(\mu)$ is equivalent to approximating the largest eigenvalue of $\mathcal{L}_\mu^{-1}$, or equivalently the smallest eigenvalue of $-\mathcal{L}_\mu^{-1}$. \\ For our analysis it therefore suffices to assume that we are dealing with a family of compact, self-adjoint operators 
$\{-\mathcal{L}_\mu^{-1}\}$, or with their finite-dimensional Galerkin discretizations. 
Under these standard hypotheses, the passage from the continuous PDE to the discrete matrix setting is justified 
and our uniform convergence results apply. More generally, we are interested in the resolvent of an elliptic operator, and for it our approach appears justified
as explained on the particular elliptic operator ${\mathcal L}_\mu$ above.
\end{remark}

\subsection{State of the art}
An important assumption in \RBM, which we also consider in this work, is that 
$A(\mu)$ can be written in an affine form \cite[Sec.~3.3]{HesRS16} 
of the form
\begin{equation}\label{int1}
	A(\mu)	\;	=	\;	\theta_1(\mu) A_1	+	\dots		+	\theta_\kappa(\mu) A_\kappa	\:
\end{equation}
for a small $\kappa \ll \sn$,
where the matrices $A_1, \dots, A_\kappa \in {\mathbb C}^{\sn\times \sn}$ and real analytic scalar-valued 
functions $\theta_1, \dots , \theta_\kappa : {\mathbb R}^{p} \rightarrow {\mathbb R}$ are available for use. In the context of approximating the smallest eigenvalue of $A(\mu)$, the matrices
$A_1, \dots, A_\kappa$ are also assumed to be Hermitian. This assumption holds for a number of important applications, including some classes of linear parametric {\PDE}s, parameter-dependent quantum spin systems \cite{HerSWR22,BreHWRS23}, and more generally, when considering the Hermitian part of non-Hermitian $A(\mu)$. Within the context of \RBM, a few approaches have been developed to deal with the 
approximation of the smallest eigenvalue $\lambda_{\min}(\mu)$ of $A(\mu)$ by that of a smaller counterpart. Especially,
the successive constraint method (\SCM) \cite{HuyRSP07} is a well-known approach in the \RBM~community. 
It is based on the construction of an upper bound $\lambda_{\UB}(\mu)$ and a lower bound $\lambda_{\LB}(\mu)$ 
for $\lambda_{\min}(\mu)$ in a greedy fashion. Specifically, at every iteration, the parameter
\begin{equation}\label{eq:max_error2}
	\widetilde{\mu} \; = \;  \argmax_{\mu\in\Xi} \, \{ \lambda_{\UB}(\mu)-\lambda_{\LB}(\mu) \}
\end{equation}
is computed, where the maximization is over a discrete, finite set $\Xi\subset\calD$ chosen a priori. 
Then $\lambda_{\UB}(\mu)$, $\lambda_{\LB}(\mu)$ are modified in such a way that they interpolate 
$\lambda_{\text{min}}(\mu)$ at $\mu = \widetilde{\mu}$. A downside of \SCM~is that it often exhibits 
slow convergence, which can partly be attributed to the lack of the Hermite interpolation 
property in the lower bound $\lambda_{\LB}(\mu)$.
An approach making use of subspace projections of the form $V^\ast A(\mu) V$ is proposed in \cite{SirK16},
which we refer to as the subspace-\SCM~(\SSCM) method. This is also a greedy procedure and is based
on a maximization problem, as in \eqref{eq:max_error2}, over a discrete, finite set  $\Xi\subset\calD$.
The authors propose to use the smallest eigenvalue 
of the projected problem $V^\ast A(\mu) V$ as an upper bound and also derive a computationally efficient 
lower bound, for which the main ingredients rely on eigenvalue perturbation theory. They show that, with the 
modified bounds, the algorithm converges faster than the original \SCM method, especially since the new 
lower bound is proven to satisfy the Hermite interpolation property with the original smallest eigenvalue function.

\subsection{Main contributions}
The approach we propose and analyze here is also a greedy procedure that relies on the upper and lower bounds proposed in \cite{SirK16}, but it is based on the
computation of the parameter
\begin{equation}\label{eqn:par:con}
	\widehat{\mu}	\;	\in	\;
	\argmax_{\mu\in\calD} \, \{ \lambda_{\UB}(\mu)-\lambda_{\LB}(\mu) \}	\:	,
\end{equation}
thus,
we maximize the gap between the upper and lower bounds over the continuum of the
domain $\calD$ rather than over a discrete, finite subset $\Xi \subset \calD$. For this, we provide a rigorous proof of convergence of the approach here for any compact set $\calD\in\R^p$ 
and any positive integer $p$. Specifically, when $A(\mu)$ is an 
infinite-dimensional self-adjoint compact operator, we establish that $\lambda^{\calV}_{\min}(\mu)$
converges to $\lambda_{\min}(\mu)$ uniformly as the dimension of $\calV$ goes to infinity. 
(We refer to Remark \ref{rm:motivate_inf_dim} above on the significance of operating in
the infinite-dimensional self-adjoint compact operator setting.)
One of the advantages of this strategy is that, assuming that \eqref{eqn:par:con} is exactly computed, the subspace $\calV$ constructed with the approach here is
such that $\lambda_{\min}(\mu)$ is approximated by $\lambda^{\calV}_{\min}(\mu)$ with a uniform 
error certificate over the continuum of the domain $\calD$, unlike \SCM \cite{HuyRSP07} and its improvement \SSCM \cite{SirK16} where the guarantees on the approximation holds only for a discrete grid $\Xi\subseteq\calD$. Naturally, this approach comes with the difficulty of solving a non-smooth and non-convex optimization problem. If the number of parameters $p$ is small, e.g., $p = 1$ 
or $p = 2$, a global solution to the optimization problem in \eqref{eqn:par:con} may be possible. For instance, \EIGOPT, introduced in \cite{MenYK14}, exploits the
Lipschitz continuity of the eigenvalue functions and is suitable for such problems, as demonstrated by its application in \cite{KanMMM18} and in several numerical examples towards the end of this text. 
It should be emphasized that \EIGOPT we rely on here is not based on a grid search, rather based on the repeated
maximization of a piecewise-quadratic approximation of the objective, that gradually becomes more accurate,
and is guaranteed to converge to the global maximizer if a few parameters are set properly.
Still, we stress that global optimization over $\calD$, when feasible, 
is usually more expensive than optimization over a discrete, finite set. Furthermore, the solver addressing problem \eqref{eqn:par:con} 
such as \EIGOPT may not achieve convergence after a specified number of iterations, consequently yielding a suboptimal solution. Nonetheless, when the goal is to maintain a specified error level across the domain $\calD$, optimizing over the continuum set may be preferable to identify a better suited $\hat \mu$ for the construction of a subspace that controls the error all over $\calD$, even when the global optimization solver provides suboptimal solutions. A numerical example later in the text will demonstrate this point. Our numerical experiments further demonstrate that subspaces generated through global optimization, even when 
performing a limited number of iterations and with a moderate parameter count ($p>2$),  
can be further be refined by repeated optimization on finitely many points 
in order to attain accuracy across a dense discrete grid. Finally, let us remark that, even in the situation where the solver for \eqref{eqn:par:con} does not converge, and thus not necessarily yields a global solution in the last iteration of the greedy strategy, the surrogate error, which is efficiently computable, 
still provides an upper bound on the error of the approximation over all $\mu\in\calD$.

The second part of this work focuses on the 
uniform approximation of the smallest singular value on a compact set 
$\calD \subset \underline{\calD}$ of a large matrix-valued function
$A \, : \, \underline{\calD} \rightarrow {\mathbb C}^{\sn \times \sn}$ 
analytic but not Hermitian on $\underline{\calD} \subseteq \R^{\dimPar}$.
In this case, we still assume $A(\mu)$ has the expression of the form (\ref{int1}), 
and the availability of $A_1, \dots, A_\kappa \in {\mathbb C}^{\sn\times \sn}$ and 
$\theta_1, \dots , \theta_\kappa : {\mathbb R}^{p} \rightarrow {\mathbb R}$. But the matrices $A_1, \dots, A_\kappa$ are no longer Hermitian. The state of the art in addressing such a problem is to determine the approximation of the smallest eigenvalue of the Hermitian matrix $A(\mu)^{*}A(\mu)$, which is simply the square of the smallest singular value of $A(\mu)$. However, this has several drawbacks, which we discuss in the related section later in this text. To mitigate such drawbacks, we propose two-sided projections that can be applied directly to $A(\mu)$, and which, despite not guaranteeing the satisfaction of a specified uniform error tolerance, ensures the exact recovery of the smallest eigenvalue function at the interpolation points (in the Hermite sense) 
and overcomes the drawbacks of the state of art approach as we show in the numerical experiments.

In summary, our contributions are primarily in the following two directions.
\begin{itemize}
    \item We introduce innovative computational methodologies for estimating the smallest eigenvalue and singular value functions over the continuum of domain. These methodologies are based on subspace frameworks and utilize greedy-like approaches to construct projection spaces with the aid of a surrogate
error function, that is efficiently computable. The principal advancement over existing methods 
is the maximization of the surrogate error over the continuum of the domain rather than over
a finite subset of the domain. Additionally, for the singular value scenario, we present a novel strategy that 1) overcomes the limitations of the state-of-the-art methodology, and 2) revels to be extremely accurate despite not leading approximations with certified error tolerances.
    \item We demonstrate the convergence of our approximation framework in the infinite-dimensional context. This convergence proof trivially holds for the finite-dimensional setting, and involves an in-depth analysis of the regularity conditions associated with the surrogate error, which provides valuable insights into the primary 
 properties of the subspace framework when applied to approximate an eigenvalue-type function. Furthermore, we investigate the eigenvalue lower bound of \cite{SirK16}, and present several formal results shedding 
 light into the properties of this lower-bound function, which are essential for adjusting the parameters of our approximation algorithms. These results also appear to be vital for comprehending the potential 
behavior of the lower-bound function throughout the iterations of the proposed greedy strategy, 
as we illustrate later in the manuscript.
\end{itemize}

\subsection{Outline}
The rest of this paper is organized as follows. In \Cref{sec2}, we recall the definitions of 
the upper and lower bounds for the smallest eigenvalue of $A(\mu)$ as introduced in \cite{SirK16}.
Moreover, in this section, we prove several properties of the quantities involved in the computation 
of the lower bound, which have not been proven previously and will be crucial for deriving 
the global convergence results. In \Cref{sec3}, 
we present our subspace framework that operates on the continuum of the compact 
domain ${\mathcal D}$. The outcome of our framework is a subspace ${\mathcal V}$ 
such that the maximal error 
$\max_{\mu \in {\mathcal D}} \lambda^{\mathcal V}_{\min}(\mu) - \lambda_{\min}(\mu)$ is below a 
prescribed tolerance. In \Cref{sec4}, we prove the uniform convergence of our framework, which is based on 
the uniform Lipschitz continuity of the gap between the upper and lower bounds for $\lambda_{\min}(\mu)$. 
Formal arguments showing the Lipschitz continuity of the gap between the upper and lower bounds 
under some assumptions are given in \Cref{sec5}. In \Cref{sec7}, we focus on the approximation of 
the smallest singular value of $A(\mu)$ on all $\mu \in {\mathcal D}$ when $A(\mu)$ is an analytic 
non-Hermitian matrix-valued function. Finally, \Cref{sec8} is devoted to numerical experiments on synthetic and real examples.

\subsection{Notation}
In the finite-dimensional case, for $x\in {\mathbb C}^{\sn}$ and $A\in {\mathbb C}^{\sn\times\sn}$, we have
$\|x\|$ denoting the Euclidean norm, and $\|A\| := \max_{w \in {\mathbb C}^{\sn} \, , \, \| w \|= 1} \| A w \|$ 
the associated induced norm, i.e., the spectral norm of $A$.
For a matrix $B$, we represent with $\text{Col}(B)$ and $\text{Null}(B)$ the column space and null space of $B$, respectively.
Furthermore, $B(i_1:i_2,j_1:j_2)$ for positive integers $i_1, i_2, j_1, j_2$ such that $i_1 < i_2$ and $j_1 < j_2$ represent
the submatrix of the matrix $B$ consisting of its rows $i_1$ through $i_2$ and columns $j_1$ through $j_2$. The notation $\ell^2(\N)$ is reserved for the Hilbert 
space of square summable infinite complex sequences equipped with the inner product 
$\langle w, v \rangle = \sum_{i = 1}^\infty \overline{w}_i v_i$ and the norm 
$\| w \| =  \sqrt{\langle w, w \rangle} = \sqrt{\sum_{i = 1}^\infty  | w_i |^2}$. For a linear bounded operator $A$
on $\ell^2(\N)$, we have $\|A\|$ representing the induced operator norm of $A$.
The symbol $I$ denotes the identity matrix of appropriate size in the finite-dimensional setting or
the identity operator on $\ell^2(\N)$. In the finite-dimensional setting, sometimes we use $I_m$ to denote the identity matrix of size $m$, and ${\mathbf e}_j$ to denote the
$j$-th column of the identity matrix $I$. We use $\range({\mathcal F})$ to denote the image
of a map ${\mathcal F}$, while $\imag(\lapVar)$ denotes the imaginary part of $\lapVar\in {\mathbb C}$. The notation ${\mathcal S}(A)$ represents the set of eigenvalues
of a matrix $A \in {\mathbb C}^{\sn\times\sn}$, or the point spectrum of a linear operator $A$. Finally, for a given vector $v\in{\mathbb C}^{\sn}$, $\diag(v)$ represents the square diagonal matrix with the elements of the vector $v$ on the main diagonal.


\section{Practical Lower and Upper Bounds for \texorpdfstring{$\lambda_{\min}(\mu)$}{TEXT}} \label{sec2}

In \cite{SirK16}, upper and lower bounds for $\lambda_{\min}(\mu)$ are proposed. Since we employ the same bounds, we recall their definitions in the following paragraphs. Regarding the lower bound, we establish  formally several properties concerning this bound and its ingredients. These results — specifically \Cref{thm:rho0} and \Cref{lem:props_eta} — are not established in \cite{SirK16}. Besides providing 
key insights into the behavior of the ingredients in the lower bound’s definition, these results are also 
employed in this work to derive a novel and simpler proof of the Hermite interpolation 
property of the lower bound, which is first proven in \cite[Thm.~3.6]{SirK16}.
\subsection{Upper Bound}\label{sec:up_bound}

The upper bound is straightforward. For any subspace ${\mathcal V}$ of ${\mathbb C}^{\sn}$, we have:
\begin{equation}\label{eq:monotone}
\lambda_{\min}(\mu) = \min_{v \in {\mathbb C}^{\sn}, \| v \| = 1} v^\ast A(\mu) v 
			\leq 
	\min_{v \in {\mathcal V}, \| v \| = 1} v^\ast A(\mu) v = \lambda^{\mathcal V}_{\min}(\mu),
\end{equation}
where $\lambda^{\mathcal V}_{\min}(\mu)$ denotes the smallest eigenvalue of $A^V(\mu) = V^\ast A(\mu) V$. Here, $V$ is a matrix whose columns form an orthonormal basis for ${\mathcal V}$. Remarkably, if $\lambda_{\min}(\widehat{\mu})$ is simple at some $\widehat{\mu} \in {\mathcal D}$, and the corresponding eigenvector of $A(\widehat{\mu})$ lies in ${\mathcal V}$, then $\lambda_{\min}(\mu)$ and $\lambda^{\mathcal V}_{\min}(\mu)$ are differentiable at $\mu = \widehat{\mu}$ with:
\begin{equation}
\lambda_{\min}(\widehat{\mu}) = \lambda^{\mathcal V}_{\min}(\widehat{\mu}) \quad \text{and} \quad \nabla \lambda_{\min}(\widehat{\mu}) = \nabla \lambda^{\mathcal V}_{\min}(\widehat{\mu}),
\label{eq:interpolate}
\end{equation}
that is a Hermite interpolation property (see, e.g., \cite[Lem.~2.6]{KanMMM18} for the infinite
dimensional case; the finite-dimensional counterpart above in 
${\mathbb C}^\sn$ holds using the standard inner product in ${\mathbb C}^\sn$ in the arguments).
The left equality in (\ref{eq:interpolate}) holds even when 
$\lambda_{\min}(\widehat{\mu})$ is not a simple eigenvalue, as long as the 
eigenvector corresponding to $\lambda_{\min}(\widehat{\mu})$ is in ${\mathcal V}$.

\subsection{Lower Bound}\label{sec:low_bound}

The lower bound in \cite{SirK16} is more complicated.
It involves an iterative process that generates points $\mu_1, \dots, \mu_j \in {\mathcal D}$ 
after $j$ iterations, and also an associated subspace ${\mathcal V} = {\mathcal V}_j$ 
used in the upper bound in Section \ref{sec:up_bound}.
For a fixed integer $\ell \geq 1$, the projection subspace ${\mathcal V}$ 
is defined as:
\begin{equation} \label{eq:low_bound}
{\mathcal V} = {\mathcal V}_j = \text{span} \left\{ v^{(1)}_1, \dots, v^{(1)}_\ell, \dots, v^{(j)}_1, \dots, v^{(j)}_\ell \right\},
\end{equation}
where $v^{(i)}_k$ denotes a unit eigenvector of $A(\mu_i)$ corresponding to its $k$-th smallest eigenvalue, $\lambda^{(i)}_k$. Let $V_j$ be a matrix whose columns form an orthonormal basis for ${\mathcal V}_j$. The $k$th smallest eigenvalue of the projected matrix $A^{V_j}(\mu) = V_j^\ast A(\mu) V_j$ 
and a corresponding unit eigenvector are denoted by 
$\lambda^{{\mathcal V}_j}_k(\mu)$ and $w^{V_j}_k(\mu)$, respectively.

We define $U_j(\mu)$ as the matrix formed from the eigenvectors 
of $A^{V_j}(\mu)$ lifted to the full space:
\begin{equation} \label{eq:6}
U_j(\mu) = \left[ V_j w^{V_j}_1(\mu), \dots, V_j w^{V_j}_\sr(\mu) \right],
\end{equation}
where $\sr \leq \ell$, and ${\mathcal U}_j(\mu)$ as the column space of $U_j(\mu)$. 
The orthogonal complement of this subspace is denoted as ${\mathcal U}_j^\bot(\mu)$, 
with $U_j^\bot(\mu)$ being the matrix whose columns form an orthonormal basis 
for ${\mathcal U}_j^\bot(\mu)$.

The matrix $A(\mu)$ is unitarily similar to 
\[
	\left[ \begin{array}{cc} 
		U_j(\mu)^\ast A(\mu) U_j(\mu) & U_j(\mu)^\ast A(\mu) U_j^\bot(\mu) \\ 
		U_j^\bot(\mu)^\ast A(\mu) U_j(\mu) & U_j^\bot(\mu)^\ast A(\mu) U_j^\bot(\mu) 
	\end{array} \right] ,
\]
so the transformed matrix above has the smallest eigenvalue $\lambda_1(\mu) = \lambda_{\min}(\mu)$. 
Disregarding the off-diagonal blocks of the transformed matrix, 
the smallest eigenvalue of the remaining block diagonal matrix is:
\[
	\min 
	\left\{ \lambda^{{\mathcal U}_j(\mu)}_1(\mu), \lambda^{{\mathcal U}_j^\bot(\mu)}_1(\mu) \right\}.
\]
A lower bound for $\lambda_{\min}(\mu)$ in terms of the minimum above
(involving  two projected eigenvalue problems) can be deduced as elaborated on next.
As we shall see, the deduced lower bound improves as more eigenvectors are 
included in ${\mathcal V}_j$.


In particular, it follows from eigenvalue perturbation theory \cite[Thm.~2]{Li2005} that
\begin{equation}\label{eq:lbound_int}
\begin{split}
	& \left| 
	\lambda_1(\mu) - \min \left\{ \lambda^{{\mathcal V}_j}_1(\mu), \lambda^{{\mathcal U}_j^{\bot}(\mu)}_1(\mu) \right\} 
	\right|
	\; \leq  \;	
    \frac{2 \rho^{(j)}(\mu)^2}{\xi^{(j)}(\mu)
    + \sqrt{ \xi^{(j)}(\mu)^2 + 4 \rho^{(j)}(\mu)^2 }} \,,	\quad	\text{where}	\\[.5em]
	& \rho^{(j)}(\mu) := \| U_j^{\bot}(\mu)^\ast A(\mu) U_j(\mu) \|,  \;\;
	\xi^{(j)}(\mu) := 
	\big| \lambda^{{\mathcal U}_j}_1(\mu) - \lambda^{{\mathcal U}_j^{\bot}(\mu)}_1(\mu) \big|		=
	\big| \lambda^{{\mathcal V}_j}_1(\mu) - \lambda^{{\mathcal U}_j^{\bot}(\mu)}_1(\mu) \big|.
\end{split}
\end{equation}
The inequality in \eqref{eq:lbound_int} yields the following lower bound for the 
smallest eigenvalue $\lambda_1(\mu)$ of $A(\mu)$:
\begin{equation}\label{eq:low_bound1}
\begin{split}
	f^{(j)}\left( \lambda^{{\mathcal U}_j^{\bot}(\mu)}_1(\mu) \right) &\leq \lambda_1(\mu), \quad \text{with} \\
	f^{(j)}(\eta) &:= 
	\min \left\{ \lambda^{{\mathcal V}_j}_1(\mu), \eta \right\}
	- \frac{2\rho^{(j)}(\mu)^2}{\left| \lambda^{{\mathcal V}_j}_1(\mu) - \eta \right|  
	+ \sqrt{ \left( \lambda^{{\mathcal V}_j}_1(\mu) - \eta \right)^2 + 4 \rho^{(j)}(\mu)^2 }}.
\end{split}
\end{equation}

One observation that facilitates the use of (\ref{eq:low_bound1})
is that $\rho^{(j)}(\mu)^2$ can be obtained efficiently, i.e.,
\begin{equation}\label{eq:rhomu}
\begin{split}
	\rho^{(j)}(\mu)^2 &= \| (I - U_j(\mu) U_j(\mu)^\ast) A(\mu) U_j(\mu) \|^2 
	= \| A(\mu) U_j(\mu) - U_j(\mu) \Lambda^{{\mathcal U}_j}(\mu) \|^2 \\
	&= \lambda_{\max}\left( U_j(\mu)^\ast A(\mu)^\ast A(\mu) U_j(\mu) - \Lambda^{{\mathcal U}_j}(\mu)^2 \right),
\end{split}
\end{equation}
where
\begin{equation}\label{eq:defn_Lambda_Uj}
\Lambda^{{\mathcal U}_j}(\mu) := U_j(\mu)^\ast A(\mu) U_j(\mu)
= \diag\left( \lambda^{{\mathcal V}_j}_1(\mu), \ldots, \lambda^{{\mathcal V}_j}_{\sr}(\mu) \right).
\end{equation}
Moreover, $\rho^{(j)}(\mu)$ vanishes 
at $\mu_1, \dots, \mu_j$, as proven next.

\begin{lemma}\label{thm:rho0}
Consider $\rho^{(j)}(\mu)$ defined as in \eqref{eq:rhomu} and $\mu_i$ with $i\in\{1,\ldots,j\}$, the parameters used to construct the subspace \eqref{eq:low_bound}. It follows that $\rho^{(j)}(\mu_i) = 0\, $ for every $\, i \in \{ 1, \dots , j \}$.
\end{lemma}
\begin{proof}.
Let $i \in \{1, \dots, j\}$. We have 
$\lambda^{(i)}_k = \lambda^{{\mathcal V}_j}_k(\mu_i)$ 
for $k = 1, \dots, \ell$ (see \cite[Lem.~2.3]{KanMMM18} in
the infinite dimensional setting, extending to finite dimension by
using the standard inner product in ${\mathbb C}^n$).
By the Courant–Fischer theorem \cite[Thm.~4.2.11]{Horn1985}, if $w$ is an eigenvector of $V_j^\ast A(\mu_i) V_j$ corresponding to $\lambda^{{\mathcal V}_j}_k(\mu_i)$, then $V_j w$ is an eigenvector of $A(\mu_i)$ corresponding to $\lambda^{(i)}_k$.
It follows that
\vskip -3ex
\begin{align*}
	A(\mu_i) U_j(\mu_i) 
	&= A(\mu_i)
	\begin{bmatrix}
		V_j w^{V_j}_1(\mu_i) & \dots & V_j w^{V_j}_{\sr}(\mu_i)
	\end{bmatrix} \\
	&= 
	\begin{bmatrix}
		\lambda^{(i)}_1 V_j w^{V_j}_1(\mu_i) & \dots & \lambda^{(i)}_{\sr} V_j w^{V_j}_{\sr}(\mu_i)
	\end{bmatrix} 
        = U_j(\mu_i) \cdot \diag\left( \lambda^{(i)}_1, \ldots, \lambda^{(i)}_{\sr} \right),
\end{align*}
\vskip -1ex
\noindent
implying
$\rho^{(j)}(\mu_i) = \| U_j^{\bot}(\mu_i)^\ast A(\mu_i) U_j(\mu_i) \| = \| U_j^{\bot}(\mu_i)^\ast U_j(\mu_i) \cdot \diag(\lambda^{(i)}_1, \ldots, \lambda^{(i)}_{\sr}) \| = 0.$
\end{proof}

The lower bound \eqref{eq:low_bound1} is not practical, as $\lambda^{{\mathcal U}_j^{\bot}(\mu)}_1(\mu)$ involves computing the smallest eigenvalue of the large matrix $U_j^{\bot}(\mu)^\ast A(\mu) U_j^{\bot}(\mu)$, almost as expensive as computing $\lambda_1(\mu)$.
A remedy to this difficulty is observing that $f^{(j)}(\eta)$ defined in \eqref{eq:low_bound1} is monotonically increasing \cite[Lem.~3.1]{SirK16}, implying any $\eta^{(j)}(\mu) \leq \lambda^{{\mathcal U}_j^{\bot}(\mu)}_1(\mu)$ (cheaply computable) yields the lower bound:
$f^{(j)}\left( \eta^{(j)}(\mu) \right) \leq \lambda_1(\mu).$
The next subsection explains an efficient way to obtain a lower bound $\eta^{(j)}(\mu)$ 
satisfying $\eta^{(j)}(\mu) \leq \lambda^{{\mathcal U}_j^{\bot}(\mu)}_1(\mu)$.


\subsubsection{Determining \texorpdfstring{$\eta^{(j)}(\mu)$}{TEXT} such that \texorpdfstring{$\eta^{(j)}(\mu)\leq \lambda^{{\mathcal U}_j^{\bot}(\mu)}_1(\mu)$.}{TEXT}}
To efficiently approximate $\lambda^{{\mathcal U}_j^{\bot}(\mu)}_1(\mu)$ with a lower bound, 
we adopt the optimization-based technique from \cite[Sec.~3]{SirK16}
analogous to the \SCM \cite{HuyRSP07}. 
To be specific, for any $\widehat{\mu} \in \mathcal{D}$, we have
\[
	\lambda^{{\mathcal U}_j^{\bot}(\mu)}_1(\widehat{\mu}) 
	= \min_{z \in \mathbb{C}^{\sn - r} \setminus \{ 0 \}} 
	\sum_{m = 1}^\kappa \theta_m(\widehat{\mu})  
	\frac{z^* U_j^\bot(\mu)^* A_m U_j^\bot(\mu) z}{z^* z}.
\]
The minimization problem above can alternatively be rewritten as 
\begin{equation}\label{eq:bigproject_var}
	\lambda^{{\mathcal U}_j^{\bot}(\mu)}_1(\widehat{\mu}) 
	= \min_{y \in \mathcal{Y}_j(\mu)} \theta(\widehat{\mu})^T y,
\end{equation}
where $\theta(\widehat{\mu}) := [\theta_1(\widehat{\mu}), \dots, \theta_\kappa(\widehat{\mu})]^T$ and
$\mathcal{Y}_j(\mu) := \range(\mathcal{Q}_j(\mu))$
for the mapping $\mathcal{Q}_j(\mu) : \mathbb{C}^{\sn - r} \setminus \{ 0 \} \to \mathbb{R}^\kappa$,
\[
	\mathcal{Q}_j(\mu)(z) := \left[
		\frac{z^* U_j^\bot(\mu)^* A_1 U_j^\bot(\mu) z}{z^* z}, \dots,
		\frac{z^* U_j^\bot(\mu)^* A_\kappa U_j^\bot(\mu) z}{z^* z}
	\right].
\]
What makes the minimization problem difficult is the nonconvex nature 
of the feasible region $\mathcal{Y}_j(\mu)$. We next explain 
an outer polyhedral approximation of this nonconvex feasible region. Replacing $\mathcal{Y}_j(\mu)$
with this polyhedron leads to a linear program whose solution gives a lower bound for 
$\lambda^{{\mathcal U}_j^{\bot}(\mu)}_1(\widehat{\mu})$ as desired.

To relax $\mathcal{Y}_j(\mu)$ into a polyhedron,
 it is shown in \cite[Lem.~3.2]{SirK16} that for each $i \in \{ 1, \dots, j \}$, 
\begin{equation}\label{eq:beta_i}
	\begin{split}
		& \lambda^{{\mathcal U}_j^{\bot}(\mu)}_1(\mu_i)
		\;\geq\;
		\lambda^{(i)}_1 + \beta^{(i,j)}(\mu) \,, \quad \text{where} \\[0.7em]
		& \beta^{(i,j)}(\mu) \;:=\;
		\lambda_{\min} \left( 
			\Lambda^{(i)} - \lambda^{(i)}_1 I_\ell
			- [V^{(i)}]^* U_j(\mu) U_j(\mu)^* V^{(i)} (\Lambda^{(i)} - \lambda^{(i)}_{\ell+1} I_\ell)
		\right) \,,
	\end{split}
\end{equation}
with $\Lambda^{(i)} := \diag(\lambda^{(i)}_1, \dots, \lambda^{(i)}_\ell), \;
	V^{(i)} := [ 
				\begin{array}{ccc}
					v^{(i)}_1 & \cdots & v^{(i)}_\ell 
				\end{array}
			]$ holds.
Thus, for any $i \in \{ 1, \dots, j \}$, we have:
\[
	\lambda^{{\mathcal U}_j^{\bot}(\mu)}_1(\mu_i)
	= \min_{y \in \mathcal{Y}_j(\mu)} \theta(\mu_i)^T y
	\quad \Rightarrow \quad
	\theta(\mu_i)^T y \geq 
	\lambda^{{\mathcal U}_j^{\bot}(\mu)}_1(\mu_i) \geq
	\lambda^{(i)}_1 + \beta^{(i,j)}(\mu)
	\quad \forall y \in \mathcal{Y}_j(\mu),
\]
where the last inequality follows from \eqref{eq:beta_i}.
Furthermore, for any $y \in \mathcal{Y}_j(\mu)$, the entries $y_i$ are Rayleigh 
quotients and hence bounded by the spectrum of $A_i$, i.e.,
$	y_i \in [\lambda_{\min}(A_i), \lambda_{\max}(A_i)]$.
This implies that $\mathcal{Y}_j(\mu) \subseteq \mathcal{B} := [\lambda_{\min}(A_1), \lambda_{\max}(A_1)] \times \cdots \times [\lambda_{\min}(A_\kappa), \lambda_{\max}(A_\kappa)].
$
Hence, we define
\begin{equation}\label{eq:LP_polytope}
	\mathcal{Y}^{(j)}_{\mathrm{LB}}(\mu)
	:= \left\{ y \in \mathcal{B} \; \mid \; 
		\theta(\mu_i)^T y \geq \lambda^{(i)}_1 + \beta^{(i,j)}(\mu) \quad \text{for all } i = 1, \dots, j \right\}
\end{equation}
 as the polyhedral outer approximation satisfying 
 $\mathcal{Y}_j(\mu) \subseteq \mathcal{Y}^{(j)}_{\mathrm{LB}}(\mu)$.
Replacing  $\mathcal{Y}_j(\mu)$ in (\ref{eq:bigproject_var}) with $\mathcal{Y}^{(j)}_{\mathrm{LB}}(\mu)$
leads to a lower bound on $\lambda^{{\mathcal U}_j^{\bot}(\mu)}_1(\mu)$ in terms
of a linear program,  
which is stated formally below, and which is first established in \cite[Sec.~3.2.1]{SirK16}.
\begin{theorem}\label{thm:LB1}
For every $\mu \in \mathcal{D}$, the following inequality holds
(with $\mathcal{Y}^{(j)}_{\mathrm{LB}}(\mu)$ given by \eqref{eq:LP_polytope}):
\begin{equation}\label{eq:LP}
	\lambda^{{\mathcal U}_j^{\bot}(\mu)}_1(\mu)
	\;\geq\;
	\eta_\ast^{(j)}(\mu)
	\;:=\;
	\min \left\{ \theta(\mu)^T y \mid y \in \mathcal{Y}^{(j)}_{\mathrm{LB}}(\mu) \right\} .
\end{equation}
\end{theorem}
The linear program (i.e., the minimization problem) in~\eqref{eq:LP},
since its feasible region ${\mathcal Y}^{(j)}_{\rm{LB}}(\mu)$ is compact,
must attain its minimum. We denote the minimizer by $y^{(j)}(\mu) \in \mathbb{R}^{\kappa}$ 
throughout this text.
Using the bound $\eta^{(j)}_\ast(\mu) \leq \lambda^{{\mathcal U}_j^{\bot}(\mu)}_1(\mu)$,
monotonicity of $f^{(j)}$, and inequality in \eqref{eq:low_bound1}, we deduce:
\begin{equation}\label{eq:defn_LB}
	\lambda^{(j)}_{\rm{LB}}(\mu) \; \vcentcolon= \;
	f^{(j)}(\eta^{(j)}_\ast(\mu))
	\;\leq\;
	\lambda_1(\mu).
\end{equation}
Algorithm \ref{alg:lb} outlines the computation of $\lambda^{(j)}_{\rm{LB}}(\mu)$ based on the definition (\ref{eq:defn_LB}).

Important properties of $\eta^{(j)}_\ast(\mu)$ and $\beta^{(i,j)}(\mu)$ are listed in the following result.

\begin{lemma}\label{lem:props_eta}
The following assertions hold for $\eta^{(j)}_\ast(\mu)$ and $\beta^{(i,j)}(\mu)$ defined as in~\eqref{eq:LP} and~\eqref{eq:beta_i}, respectively.
\begin{enumerate}
	\item\label{item1}
	$\beta^{(i,j)}(\mu) \geq 0 \:$ for every $\mu \in {\mathcal D}$ and $i \in \{ 1, \dots , j \}$.
	
	\item\label{item2}
	$\eta^{(j)}_\ast(\mu_i) \geq \lambda^{(i)}_1 \,$ for every $i \in \{ 1, \dots , j \}$.
	
	\item\label{item3}
	If $\lambda^{(i)}_1$ is a simple eigenvalue of $A(\mu^{(i)})$, then
	$\beta^{(i,j)}(\mu_i) > 0 \: $ for every $i \in  \{ 1, \dots , j \}$.
	
	\item\label{item4}
	If $\lambda^{(i)}_1$ is a simple eigenvalue of $A(\mu^{(i)})$, then
	$\eta^{(j)}_\ast(\mu_i) > \lambda^{(i)}_1 \,$ for every $i \in \{ 1, \dots , j \}$.
	
	\item
	For every $i \in \{ 1, \dots , j \}$,
	if $\lambda^{(i)}_{\sr +1} > \lambda^{(i)}_{\sr}$, then 
	\begin{equation}\label{eq:beta_lb}
		\beta^{(i,j)}(\mu_i) = \lambda^{(i)}_{\sr+1} - \lambda^{(i)}_1 > 0.
	\end{equation}

	\item \label{item6}
	For every $i \in \{ 1, \dots , j \}$,
	if $\lambda^{(i)}_{\sr +1} > \lambda^{(i)}_{\sr}$, then
	\begin{equation}\label{eq:lbound_eta}
		\eta^{(j)}_\ast(\mu_i) \;\geq\; \lambda^{(i)}_{\sr +1}.
	\end{equation}
\end{enumerate}
\end{lemma}

\begin{proof}. We proceed point-by-point.
\begin{enumerate}
	\item 
	Observe that $\beta^{(i,j)}(\mu)$ is the smallest eigenvalue of
	\[
		M^{(i)}_j(\mu) \;\vcentcolon=\;
		(\Lambda^{(i)} - \lambda^{(i)}_1 I_\ell) +
		B^{(i)}_j(\mu) B^{(i)}_j(\mu)^\ast (\lambda^{(i)}_{\ell+1} I_\ell - \Lambda^{(i)}),
	\]
	where $B^{(i)}_j(\mu) = [V^{(i)}]^\ast U_j(\mu)$. Moreover, $M^{(i)}_j(\mu)$ is similar to the Hermitian matrix
	\begin{equation}\label{eq:def_tilM}
	\begin{split}
		\widetilde{M}^{(i)}_j(\mu) \;\vcentcolon=\; &
		(\lambda^{(i)}_{\ell+1} I_\ell - \Lambda^{(i)})^{1/2} 
			M^{(i)}_j(\mu)
		(\lambda^{(i)}_{\ell+1} I_\ell - \Lambda^{(i)})^{-1/2} \\[0.5em]
		= \; &
		(\Lambda^{(i)} - \lambda^{(i)}_1 I_\ell) + 
		(\lambda^{(i)}_{\ell+1} I_\ell - \Lambda^{(i)})^{1/2} 
		B^{(i)}_j(\mu) B^{(i)}_j(\mu)^\ast
		(\lambda^{(i)}_{\ell+1} I_\ell - \Lambda^{(i)})^{1/2},
	\end{split}
	\end{equation}
	which is Hermitian positive semidefinite. Hence, $\beta^{(i,j)}(\mu) \geq 0$.

	\item 
	This follows from
	\begin{equation}\label{eq:eta_vs_l1}
		\eta^{(j)}_\ast(\mu_i) = \theta(\mu_i)^T y^{(j)}(\mu_i) 
		\;\geq\; \lambda^{(i)}_1 + \beta^{(i,j)}(\mu_i)
		\;\geq\; \lambda^{(i)}_1,
	\end{equation}
	where the first inequality is due to the definition of $\mathcal{Y}^{(j)}_{\mathrm{LB}}(\mu_i)$
	and $y^{(j)}(\mu_i) \in \mathcal{Y}^{(j)}_{\mathrm{LB}}(\mu_i)$, while the second 
	inequality is due to part~\ref{item1}.

	\item We proceed as in part \ref{item1}. Now the first column of $U_j(\mu_i)$ is an
		eigenvector of $A(\mu_i)$ corresponding to $\lambda^{(i)}_1$. By simplicity
		assumption on $\lambda^{(i)}_1$, this first column is $c v^{(i)}_1$ for some
		$c \in {\mathbb C}$ such that $|c| = 1$, where the eigenvector $v^{(i)}_1$
		is the first column of $V^{(i)}$. By the orthonormality of the columns of $U_j(\mu_i)$
		and $V^{(i)}$, the first column and row of $B^{(i)}_j(\mu_i) = [ V^{(i)} ]^\ast U_j(\mu_i)$
		must be zero except the $(1,1)$ entry which is $c$. The same holds for 
		$B^{(i)}_j(\mu_i) B^{(i)}_j(\mu_i)^\ast$ with the $(1,1)$ entry equal to $| c |^2$, also for 
        \[C \;\vcentcolon=\; (\lambda^{(i)}_{\ell+1} I_\ell - \Lambda^{(i)})^{1/2} 
					B^{(i)}_j(\mu) B^{(i)}_j(\mu)^\ast (\lambda^{(i)}_{\ell+1} I_\ell - \Lambda^{(i)})^{1/2}
                    \]
		with $(1,1)$ entry equal to $|c|^2 (\lambda^{(i)}_{\ell+1} - \lambda^{(i)}_1)$. Now consider $z^\ast \widetilde{M}^{(i)}_j(\mu_i) z$ for any nonzero $z \in {\mathbb C}^{\ell}$,
		where $\widetilde{M}^{(i)}_j(\mu_i)$ is defined as in \eqref{eq:def_tilM}.
		If the first entry of $z$, say $z_1$, is not zero, then letting $\widetilde{z} \in {\mathbb C}^{\ell-1}$ 
		the vector formed of the remaining entries of $z$ excluding its first entry, we have
		\[
			\hskip 5ex
			z^\ast  C z = | z_1 |^2 |c|^2 (\lambda^{(i)}_{\ell+1} - \lambda^{(i)}_1)
					\;	+	\;
							\widetilde{z}^\ast C(2:\ell,2:\ell) \widetilde{z}
					\;\; \geq \;\;	| z_1 |^2 |c|^2 (\lambda^{(i)}_{\ell+1} - \lambda^{(i)}_1) \;\; > \;\; 0,
		\]
		so $z^\ast \widetilde{M}^{(i)}_j(\mu_i) z > 0$. If $z_1=0$, at least one
		of the remaining entries of $z$ is not zero, so
		\[
			z^{\, \ast}
			(\Lambda^{(i)} - \lambda^{(i)}_1 I_\ell)
			z	\;	>	\;	0 \,	,
		\]
		and again $z^\ast \widetilde{M}^{(i)}_j(\mu_i) z > 0$.
		This implies that the smallest eigenvalues of $\widetilde{M}^{(i)}_j(\mu_i)$
		and $M^{(i)}_j(\mu_i)$ are positive, so $\beta^{(i,j)}(\mu_i) > 0$.			
		\smallskip					
		\item 
		This follows from a line of reasoning similar to part \ref{item2}. Specifically,
		(\ref{eq:eta_vs_l1}) holds, but now the last inequality in (\ref{eq:eta_vs_l1}) 
		is satisfied strictly as $\beta^{(i,j)}(\mu_i) > 0$ from part \ref{item3}.		
		\smallskip
		\item Due to the assumption $\: \lambda^{(i)}_{\sr +1} > \lambda^{(i)}_{\sr} \:$, the columns of $U_j(\mu_i)$ 
		and the first $\sr$ columns of $V^{(i)}$ form orthonormal bases for the same invariant 
		subspace of $A(\mu_i)$, namely
		$\text{Null}(A(\mu_i) - \lambda^{(i)}_1 I) \oplus \dots \oplus \text{Null}(A(\mu_i) - \lambda^{(i)}_{\sr} I)$.
		Hence, there is an $\sr \times \sr$ unitary matrix $Q$ such that
		$U_j(\mu_i)	\;	=	\;	 \, V^{(i)}(1:\sn ,  1:\sr) \, Q \:$.
		Now let us first suppose $\ell > \sr$.
		By the orthonormality of the columns of $V^{(i)}$, we have
		\[
			B^{(i)}_j(\mu_i)
				\;	:=	\;
				[ V^{(i)} ]^\ast U_j(\mu_i)
				\;	=	\;
				[ V^{(i)} ]^\ast  V^{(i)}(1:\sn ,  1:\sr) \, Q
				\;	=	\;
			\left[
				\begin{array}{c}
					Q	\\
					0
				\end{array}
			\right]	\:	,
		\]
		which in turn implies
		\begin{align*}
			\begin{aligned}
			C \; 	\vcentcolon= & \; (\lambda^{(i)}_{\ell+1} I_\ell - \Lambda^{(i)})^{1/2} 
				B^{(i)}_j(\mu_i) B^{(i)}_j(\mu_i)^\ast (\lambda^{(i)}_{\ell+1} I_\ell - \Lambda^{(i)})^{1/2}
			\;	= 	
	\begin{bmatrix}
					\lambda^{(i)}_{\ell+1} I_{\sr} - \Lambda^{(i)}(1:\sr,1:\sr)	 	&	\;\; 0		\\
						0	&	\;\; 0
				\end{bmatrix}	\:	
			\end{aligned}
		\end{align*}
		so that, recalling (\ref{eq:def_tilM}),
		\begin{equation*}
		\begin{split}
			\widetilde{M}^{(i)}_j(\mu_i)	\;	&	=	\;
			  (\Lambda^{(i)} - \lambda^{(i)}_1 I_\ell)	+	C\;	=\;
				\begin{bmatrix}
					(\lambda^{(i)}_{\ell+1} - \lambda^{(i)}_1)  I_\sr 	 	&	 0		\\
						0	&	 \Lambda^{(i)}(\sr+1:\ell, \sr+1:\ell) - \lambda^{(i)}_1 I_{\ell - \sr}
				\end{bmatrix}	\:	.
		\end{split}
		\end{equation*}	
		Hence, $\beta^{(i,j)}(\mu_i)$, that is the smallest eigenvalue of $M^{(i)}_j(\mu_i)$,
		is also the smallest eigenvalue of $\widetilde{M}^{(i)}_j(\mu)$, which is $\lambda^{(i)}_{\sr +1} - \lambda^{(i)}_1$. If $\ell = \sr$, following the steps of the derivation above, we have $B^{(i)}_j(\mu_i) = Q$,
		$C = \lambda^{(i)}_{\sr+1} I_{\sr} - \Lambda^{(i)}$, 
		$\widetilde{M}^{(i)}_j(\mu) = (\lambda^{(i)}_{\sr+1} - \lambda^{(i)}_1)  I_\sr$, 
		so again $\beta^{(i,j)}(\mu_i) = \lambda^{(i)}_{\sr+1} - \lambda^{(i)}_1$. 
		\smallskip
		\item
		This follows from arguments similar to those used in part \ref{item2}. In particular,
		\eqref{eq:eta_vs_l1} holds, but, using \eqref{eq:beta_lb}, the last inequality in \eqref{eq:eta_vs_l1} can be replaced by $\lambda^{(i)}_1 + \beta^{(i,j)}(\mu_i)  
						\: = \: 
			\lambda^{(i)}_{\sr+1}$.
	\end{enumerate}
        This concludes the proof.
    \end{proof}

Exactly as in the upper bound case, the lower bound $\lambda^{(j)}_{\rm LB}(\mu)$ 
defined in~\eqref{eq:defn_LB} interpolates $\lambda_{\min}(\mu)$ at the points
$\mu_1, \dots , \mu_j \in {\calD}$ in the Hermite sense, which we formally present in the
next theorem. Note that, for part \ref{item:lbound_dinter} of the theorem, a different proof is also stated in \cite[Thm.~3.6]{SirK16}.

\begin{theorem}\label{thm:Hermite_int_LB}
For $i = 1,\dots,j$, we have:
\begin{enumerate}
    \item \label{item:lbound_inter}
    $\lambda^{(j)}_{\rm LB}(\mu_i) = \lambda_{\min}(\mu_i)$.
    \item \label{item:lbound_dinter}
    If $\lambda_{\min}(\mu_i)$ is simple, then
    $\lambda^{{\mathcal V}_j}_1(\mu) , \lambda^{(j)}_{\rm LB}(\mu)$ are differentiable at $\mu_i$ and
    $\nabla \lambda^{(j)}_{\rm LB}(\mu_i) = 
        \nabla \lambda^{{\mathcal V}_j}_1(\mu_i) = 
        \nabla \lambda_{\min}(\mu_i)$.
\end{enumerate}
\end{theorem}
\begin{proof}$\;\;$ \\
\begin{enumerate}
\item
From \eqref{eq:interpolate}, we know $\lambda^{{\mathcal V}_j}_1(\mu_i) = \lambda^{(i)}_1$ as 
$\mathcal{V}_j$ contains an eigenvector of $A(\mu_i)$ associated with $\lambda^{(i)}_1$, and
we have $\eta^{(j)}_\ast(\mu_i) \geq \lambda^{(i)}_1 = \lambda^{{\mathcal V}_j}_1(\mu_i)$,
where the inequality follows from part \ref{item2} of \Cref{lem:props_eta}. Also, \Cref{thm:rho0} gives $\rho^{(j)}(\mu_i) = 0$. Recalling the definition of $f^{(j)}(\eta)$ from (\ref{eq:low_bound1}),
these imply the interpolation property of the lower bound at $\mu_i$:
\[
\lambda^{(j)}_{\rm{LB}}(\mu_i) = f^{(j)}(\eta^{(j)}_\ast(\mu_i)) = \min\left\{ \lambda^{{\mathcal V}_j}_1(\mu_i), \eta^{(j)}_\ast(\mu_i) \right\} = \lambda_{\min}(\mu_i).
\]

\item
Assume now that $\lambda_{\min}(\mu_i)$ is a simple eigenvalue of $A(\mu_i)$. Then, by part \ref{item4} of \Cref{lem:props_eta}, we have $\eta^{(j)}_\ast(\mu_i) > \lambda^{{\mathcal V}_j}_1(\mu_i)$. Since both $\eta^{(j)}_\ast(\mu)$ and $\lambda^{{\mathcal V}_j}_1(\mu)$ vary continuously with respect to $\mu$, this strict inequality holds in a neighborhood of $\mu_i$.
Hence, for $\mu$ near $\mu_i$, the lower bound is expressed as
\begin{equation}\label{eq:LB_near_mu}
\lambda^{(j)}_{\rm{LB}}(\mu) = \lambda^{{\mathcal V}_j}_1(\mu) -
\frac{2\rho^{(j)}(\mu)^2}{\left| \lambda^{{\mathcal V}_j}_1(\mu) - \eta^{(j)}_\ast(\mu) \right| + \sqrt{ \left( \lambda^{{\mathcal V}_j}_1(\mu) - \eta^{(j)}_\ast(\mu) \right)^2 + 4\rho^{(j)}(\mu)^2 }}.
\end{equation}

\noindent
Simplicity of $\lambda_{\min}(\mu_i)$ implies simplicity 
of $\lambda^{{\mathcal V}_j}_1(\mu_i)$.
As $\lambda^{{\mathcal V}_j}_1(\mu_i)$ is a simple eigenvalue of $A^{V_j}(\mu_i)$, 
the function $\lambda^{{\mathcal V}_j}_1(\mu)$ is differentiable at $\mu = \mu_i$. 
From \Cref{thm:rho0}, $\rho^{(j)}(\mu_i) = 0$, so differentiating \eqref{eq:LB_near_mu} 
at $\mu = \mu_i$ yields $\nabla \lambda^{(j)}_{\rm{LB}}(\mu_i) = \nabla \lambda^{{\mathcal V}_j}_1(\mu_i)$.
Finally, \eqref{eq:interpolate} gives 
$\nabla \lambda^{{\mathcal V}_j}_1(\mu_i) = \nabla \lambda_{\min}(\mu_i)$.
\end{enumerate}
This completes the proof.
\end{proof}

\begin{algorithm}[H]
\caption{Computation of a Lower Bound for $\lambda_{\min}(\mu)$}
\label{alg:lb}
\begin{algorithmic}[1]
\Require $\mu, \mu_1, \dots , \mu_j \in \mathcal{D}$, $\ell \in \mathbb{N}$; eigenvalues $\lambda_k(\mu_i)$ and eigenvectors $v_k(\mu_i)$ for $k = 1, \dots, \ell$, $i = 1, \dots, j$; $\lambda_{\ell+1}(\mu_i)$ for $i = 1, \dots, j$; bounds $\lambda_{\min}(A_i)$, $\lambda_{\max}(A_i)$; orthonormal basis $V_j$ for $\mathcal{V}_j$ as in \eqref{eq:low_bound}.
\Ensure $\lambda^{(j)}_{\rm{LB}}(\mu)$ as defined in \eqref{eq:defn_LB}.
\State Compute the smallest $\sr$ eigenpairs $(\lambda^{{\mathcal V}_j}_k(\mu), w^{V_j}_k(\mu))$ of $V_j^* A(\mu) V_j$ for $k = 1, \dots, \sr$.
\State Set $U_j(\mu) \gets [ V_j w^{V_j}_1(\mu) \; \cdots \; V_j w^{V_j}_\sr(\mu) ]$.
\vskip .3ex
\State Compute $A(\mu) U_j(\mu)$ and then $\rho^{(j)}(\mu)^2$ via \eqref{eq:rhomu}.
\vskip .6ex
\State Compute $\beta^{(i,j)}(\mu)$ from \eqref{eq:beta_i} for $i = 1, \dots, j$.
\vskip .5ex
\State Solve the linear program \eqref{eq:LP} to obtain $\eta^{(j)}_\ast(\mu)$. \label{alg:lb:lp}
\vskip .5ex
\State Compute $\lambda^{(j)}_{\rm{LB}}(\mu) \gets f^{(j)}(\eta^{(j)}_\ast(\mu))$, where $f^{(j)}$ is as in \eqref{eq:low_bound1}.
\end{algorithmic}
\end{algorithm}

\begin{remark}\label{rmk:int}
Let $\lambda_{\rm SCM}^{(j)}(\mu)$ be the lower bound computed via 
the \textsc{SCM} method \cite{HuyRSP07}, which is indeed $\eta^{(j)}_\ast(\mu)$ 
from \eqref{eq:LP}, but with $\beta^{(i,j)}(\mu) = 0$ for every $i \in \{ 1, \dots, j \}$. 
In \cite[eq.~(3.9)]{SirK16}, it is claimed -- without proof -- that
\begin{equation}\label{eqn:false:rel}
\lambda_{\rm SCM}^{(j)}(\mu) \le \lambda_{\LB}^{(j)}(\mu).
\end{equation}
It is also claimed in \cite{SirK16} that the interpolation property $\lambda^{(j)}_{\rm LB}(\mu_i) = \lambda_{\min}(\mu_i)$
holds based on this inequality, as $\lambda^{(j)}_{\rm SCM}(\mu_i) = \lambda_{\min}(\mu_i)$.
However, our numerical results suggest that this inequality does \emph{not} always hold; see \Cref{sec:NE:exm1}.
Motivated by this, it appears that a sharper lower bound is given by
\begin{equation}\label{eqn:new:LB}
	\widetilde{\lambda}_{\LB}^{(j)}(\mu) := 
		\max \left\{ \lambda_{\LB}^{(j)}(\mu), \lambda_{\rm SCM}^{(j)}(\mu) \right\}.
\end{equation}
While this new bound is tighter, it requires solving two linear programs per evaluation of $\mu$, 
increasing computational cost. As shown in \Cref{sec:NE:exm2}, these linear programs form a substantial 
portion of the total runtime of the framework we propose in the next section.
Moreover, this improved accuracy does \emph{not} necessarily reduce the total iteration 
count required for convergence. In fact, overall computation time typically \emph{increases} 
when using \eqref{eqn:new:LB}.
Thus, we use in our framework the original lower bound $\lambda_{\LB}^{(j)}(\mu)$ 
(rather than $\widetilde{\lambda}_{\LB}^{(j)}(\mu)$ above), which has the additional 
advantage of satisfying Hermite interpolation properties (\Cref{thm:Hermite_int_LB}), 
unlike $\lambda_{\rm SCM}^{(j)}(\mu)$. See \Cref{figINT} in \Cref{sec:NE:exm1} for an 
illustrative example.
\end{remark}


\section{A Subspace Framework}\label{sec3}
The bounds described in the previous section are used in \cite{SirK16}
to form a subspace ${\mathcal V}$ such that
$\lambda_{\min}^{\mathcal V}(\mu)$ is an approximation for $\lambda_{\min}(\mu)$ for all $\mu \in {\mathcal D}$. 
The practice used in \cite{SirK16} is a greedy procedure to gradually reduce the maximal error (i.e., the maximal gap between the upper and lower bounds of the previous section) in
a discrete subset of ${\mathcal D}$, similar to the strategy adopted in \SCM \cite{HuyRSP07}.
Formally, at the $j$-th iteration, given a set of points $\mu_1, \dots , \mu_j \in {\mathcal D}$ and a subspace
${\mathcal V}_j$ as in (\ref{eq:low_bound}) constructed based on these points,  
the estimate for the maximal error
\begin{equation} \label{eq:errXi}
\max_{\mu \in \Xi} \;
\frac{ \left( \lambda^{{\mathcal V}_j}_{\min}(\mu)	-	\lambda^{(j)}_{\rm{LB}}(\mu) \right) }
{ | \lambda^{{\mathcal V}_j}_{\min}(\mu) |}
\end{equation}
on a finite subset $\Xi$ of ${\mathcal D}$ is computed. The points  $\mu_1, \dots , \mu_j$ are enriched with $\mu_{j+1}\in \Xi$,
which is a maximizer of the maximization problem in \eqref{eq:errXi}, and the subspace ${\mathcal V}_j$ is 
expanded into ${\mathcal V}_{j+1}$
with the inclusion of the eigenvectors of $A(\mu_{j+1})$ corresponding to its $\ell$-smallest eigenvalues. 
Then the ($j+1$)-st iteration is carried out similarly. This procedure is repeated until the estimate for the 
maximal error (i.e., (\ref{eq:errXi}) for some $j$) is less than a prescribed tolerance. 

Here, we propose to maximize the error estimate over the whole domain ${\mathcal D}$
rather than over a finite set $\Xi \subset {\mathcal D}$. That is, at iteration $j$, we solve
\begin{equation}\label{def:H}  
		\max_{\mu \in {\mathcal D}} \;H^{(j)}(\mu),\quad\;\; \text{where}\;\;\;
		H^{(j)}(\mu)	\:	\vcentcolon=			\:
				\lambda^{{\mathcal V}_j}_{\min}(\mu)	-	\lambda^{(j)}_{\rm{LB}}(\mu)	\:,
\end{equation}
employing the software \EIGOPT \cite{MenYK14}. 
Afterward, the subspace ${\mathcal V}_j$ and the points $\mu_1, \dots , \mu_j$ are
updated as described in the previous paragraph, but using the maximizer $\mu$ for the optimization
problem in (\ref{def:H}). We thus propose the framework outlined in \Cref{alg:sf}. Note that the only large size problem that needs to be solved
at every iteration of the proposed subspace framework is the large-scale computation
of the eigenvalues and eigenvectors of $A(\mu_{j+1})$ in line \ref{alg:large_eigs}.
On the other hand, the maximization problem in line \ref{alg:sub_prob} requires
the computation of the smallest eigenvalue of the small-scale matrix $A^{V_{j}}(\mu)$,
and the solution of the linear program needed for  $\lambda^{(j)}_{\rm{LB}}(\mu)$
(see \Cref{alg:lb}) for several values of $\mu$.
\begin{algorithm}
	\begin{algorithmic}[1]
		\Require{The real analytic scalar functions $\theta_i(\mu) : {\mathbb R}^{p} \rightarrow {\mathbb R}$,
			Hermitian matrices $A_i \in {\mathbb C}^{\sn \times \sn}$ for $i = 1, \dots , \kappa$
			s.t. $A(\mu) = \theta_1(\mu) A_1 + \dots + \theta_\kappa(\mu) A_\kappa$; compact
			domain ${\mathcal D} \subset {\mathbb R}^{p}$; 
			$\ell\in\N$; termination tolerance $\varepsilon$.}
		\Ensure{A reduced matrix-valued function $A^V(\mu)$ and the subspace 
			${\mathcal V} = \text{Col} (V)$ such that 
			$\max_{\mu \in {\mathcal D}} \lambda^{\mathcal V}_{\min}(\mu) - \lambda_{\min}(\mu) \leq \varepsilon$.}	
		\vskip .7ex
		\State{Compute $\lambda_{\min}(A_i)$ and $\lambda_{\max}(A_i)$ for $i = 1, \dots , \kappa$.}
		\vskip .3ex
		\State{Choose the initial point $ \mu_{1} $, and let $P_1 \gets \{ \mu_1 \}$.}\label{line2}
		\vskip .3ex
		\State{Compute $\lambda_{k}(\mu_1)$, $v_{k}(\mu_1)$
			for $k = 1, \dots , \ell$, and $\lambda_{\ell+1}(\mu_{1})$.}\label{line3}
		\vskip .3ex
		\State{$
				V_1 \gets 
			\text{orth}
			\left(
			\left[
			\begin{array}{ccccccc}
				v_1(\mu_{1})  &  \dots  & v_\ell(\mu_{1})	
			\end{array}
			\right]
			\right) \;\;$  and  
			$\;\;\: 
			{\mathcal V}_1 \gets \text{span}\{ v_1(\mu_{1}), \:\, \dots \:\, , v_\ell(\mu_{1}) \}$.}\label{line4}
		\vskip .2ex
		\For{$j = 1, 2, \dots$}
		\vskip .2ex
		\State{Solve the maximization problem
		\[
			\max_{\mu \in {\mathcal D}} \;
			\lambda^{\mathcal{V}_{j}}_{\min}(\mu)	-	\lambda^{(j)}_{\rm{LB}}(\mu) \: ;
		\]	
		\hskip 3.2ex	
		see (\ref{eq:defn_LB}) for the definition of $\lambda^{(j)}_{\rm{LB}}(\mu)$ together with
		(\ref{eq:low_bound1}) and (\ref{eq:LP}). \phantom{aaaaaaaaaaaaaaaaaaaaa}
		\phantom{aa}$\;\;\;$\hskip -.35ex
		Let $\: \varepsilon_j :=
				\max_{\mu \in {\mathcal D}} \;
			\lambda^{\mathcal{V}_{j}}_{\min}(\mu)	-	\lambda^{(j)}_{\rm{LB}}(\mu) \:$, $\:$and
		$\; \mu_{j+1} := \argmax_{\mu \in {\mathcal D}} \;
			\lambda^{\mathcal{V}_{j}}_{\min}(\mu)	-	\lambda^{(j)}_{\rm{LB}}(\mu) \:$.}
		 \label{alg:sub_prob}
		 \vskip .9ex
        		\If{	$\varepsilon_j \leq \varepsilon \:$}
                \State \textbf{Terminate} with $A^{V_{j}}(\mu) = V_{j}^\ast A(\mu) V_{j}$ and 
				${\mathcal V}_{j}$.
                \EndIf
                \vskip .3ex
		\State Include $\mu_{j+1}$ in the set of points, i.e., $P_{j+1} \gets P_j \cup \{ \mu_{j+1} \}$. \label{eqn:set:int:pnt}
		\vskip .3ex
		\State{Compute $\lambda_{k}(\mu_{j+1})$, $v_{k}(\mu_{j+1})$
			for $k = 1, \dots , \ell$, and $\lambda_{\ell+1}(\mu_{j+1})$.\label{alg:large_eigs}} 
		\vskip .3ex
		\State{$V_{j+1} \gets 
			\text{orth}
			\left(
			\left[
			\begin{array}{cccc}
				V_{j}  & v_1(\mu_{j+1})  &  \dots  &  v_\ell(\mu_{j+1})
			\end{array}
			\right]
			\right) \;\;$ and $\;\;\: {\mathcal V}_{j+1} \gets \text{Col}(V_{j+1})$.}
			\label{alg:sf:line:orth}
		\vskip .3ex
		\EndFor
	\end{algorithmic}
	\caption{Subspace framework for uniform approximation of $\lambda_{\min}(\mu)$ over ${\mathcal D}$}
	\label{alg:sf}
\end{algorithm}

In the next section, we focus on the convergence of the framework, in particular, investigating
the gap $H^{(j)}(\mu)$ between the bounds 
$\lambda^{\mathcal{V}_{j}}_{\min}(\mu)$ and $\lambda^{(j)}_{\rm{LB}}(\mu)$
generated by \Cref{alg:sf}. Our aim is to show that $\max_{\mu \in {\mathcal D}} H^{(j)}(\mu)$ 
gets closer to zero as $j$ gets larger. This also has implications about
the actual error 
\begin{equation}\label{eqn:act:err}
   \mathcal{E}^{(j)}(\mu)
   	\;	\vcentcolon=	\;
	\mathcal{E}^{\calV_j}(\mu)
   	\;	\vcentcolon=	\;
	\lambda^{\mathcal{V}_{j}}_{\min}(\mu)  -  \lambda_{\min}(\mu),
\end{equation}
decaying to zero uniformly over all $\mu \in {\mathcal D}$, as $H^{(j)}(\mu)$ bounds 
the actual error from above. To this end, the following result is helpful.

\begin{theorem}\label{thm:inter_0}
	Regarding \Cref{alg:sf}, for every $j \geq 1$ and $i = 1, \dots, j$, the following assertions hold:
	\begin{enumerate}
		\item   $H^{(j)}(\mu_i)	\;	=	\;	0$.
		\item   If $\lambda_{\min}(\mu_i)$ is simple, then $H^{(j)}(\mu)$ is differentiable at $\mu_i$
		with $\nabla H^{(j)}(\mu_i) = 0$.	
	\end{enumerate}
\end{theorem}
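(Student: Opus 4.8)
The plan is to deduce both claims directly from the Hermite interpolation properties of the two functions whose difference defines $H^{(j)}$, namely the projected smallest eigenvalue $\lambda^{\mathcal{V}_j}_{\min}$ and the lower bound $\lambda^{(j)}_{\rm{LB}}$. The only structural input I would need is that, by the construction in \Cref{alg:sf}, the subspace $\mathcal{V}_j$ contains, for each $i \in \{1,\dots,j\}$, the unit eigenvector $v_1(\mu_i)$ of $A(\mu_i)$ associated with $\lambda_{\min}(\mu_i)$ (indeed $\mathcal{V}_j$ is spanned by $v_1(\mu_i),\dots,v_\ell(\mu_i)$ over all $i \le j$). With this in hand, everything else is bookkeeping.

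For part (i): since $\mathcal{V}_j$ contains an eigenvector of $A(\mu_i)$ corresponding to $\lambda_{\min}(\mu_i)$, the left-hand identity in \eqref{eq:interpolate} --- which, as remarked immediately after it, holds even when the eigenvalue is not simple --- gives $\lambda^{\mathcal{V}_j}_{\min}(\mu_i) = \lambda_{\min}(\mu_i)$. Part (i) of \Cref{thm:Hermite_int_LB} gives $\lambda^{(j)}_{\rm{LB}}(\mu_i) = \lambda_{\min}(\mu_i)$. Subtracting, $H^{(j)}(\mu_i) = \lambda^{\mathcal{V}_j}_{\min}(\mu_i) - \lambda^{(j)}_{\rm{LB}}(\mu_i) = 0$.

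For part (ii): assume $\lambda_{\min}(\mu_i)$ is a simple eigenvalue of $A(\mu_i)$. Then \eqref{eq:interpolate} in its full form (applicable precisely because $\mathcal{V}_j$ contains a corresponding eigenvector and the eigenvalue is simple) shows that $\lambda^{\mathcal{V}_j}_{\min}$ is differentiable at $\mu_i$ with $\nabla\lambda^{\mathcal{V}_j}_{\min}(\mu_i) = \nabla\lambda_{\min}(\mu_i)$, while part (ii) of \Cref{thm:Hermite_int_LB} shows that $\lambda^{(j)}_{\rm{LB}}$ is differentiable at $\mu_i$ with $\nabla\lambda^{(j)}_{\rm{LB}}(\mu_i) = \nabla\lambda_{\min}(\mu_i)$. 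Hence $H^{(j)} = \lambda^{\mathcal{V}_j}_{\min} - \lambda^{(j)}_{\rm{LB}}$ is differentiable at $\mu_i$ and $\nabla H^{(j)}(\mu_i) = \nabla\lambda^{\mathcal{V}_j}_{\min}(\mu_i) - \nabla\lambda^{(j)}_{\rm{LB}}(\mu_i) = 0$.

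I do not anticipate a genuine obstacle; the statement is essentially a corollary of \Cref{thm:Hermite_int_LB} and \eqref{eq:interpolate}. The one point that warrants a sentence of care --- and it is already dispatched inside the proof of \Cref{thm:Hermite_int_LB}(ii) --- is that simplicity of $\lambda_{\min}(\mu_i)$ as an eigenvalue of $A(\mu_i)$ forces simplicity of $\lambda^{\mathcal{V}_j}_{\min}(\mu_i)$ as an eigenvalue of $A^{V_j}(\mu_i)$, which is what licenses differentiating the projected eigenvalue function at $\mu_i$; this follows from Cauchy interlacing together with $\lambda^{\mathcal{V}_j}_{\min}(\mu_i) = \lambda_{\min}(\mu_i)$.
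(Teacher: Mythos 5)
Your proposal is correct and follows essentially the same route as the paper: both derive the result directly from the interpolation identity $\lambda^{\mathcal{V}_j}_{\min}(\mu_i)=\lambda_{\min}(\mu_i)$ (valid since $v_1(\mu_i)\in\mathcal{V}_j$, cf.\ \eqref{eq:interpolate}) together with \Cref{thm:Hermite_int_LB}, and subtract to get $H^{(j)}(\mu_i)=0$ and, in the simple case, $\nabla H^{(j)}(\mu_i)=0$. The only cosmetic difference is that the paper cancels the gradients via $\nabla\lambda^{(j)}_{\rm{LB}}(\mu_i)=\nabla\lambda^{\mathcal{V}_j}_{\min}(\mu_i)$ directly, while you route both through $\nabla\lambda_{\min}(\mu_i)$; this is immaterial.
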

\begin{proof}$\:$
It follows from \Cref{thm:Hermite_int_LB} that 
	\begin{enumerate}
		\item[(i)] $\lambda^{(j)}_{\rm{LB}}(\mu_i) = \lambda_{\min}(\mu_i)$, and 
		\item[(ii)] if $\lambda_{\min}(\mu_i)$ is simple, 
		then $ \lambda^{{\mathcal V}_j}_{\min}(\mu), \lambda^{(j)}_{\rm{LB}}(\mu)$
		are differentiable at $\mu_i$ with 
		$\nabla \lambda^{(j)}_{\rm{LB}}(\mu_i) = \nabla \lambda^{{\mathcal V}_j}_{\min}(\mu_i)$.
	\end{enumerate}	
	Moreover, since the eigenvector corresponding to the smallest eigenvalue $\lambda_{\min}(\mu_i)$
	is included in ${\mathcal V}_j$, from (\ref{eq:interpolate}), we have
	$\lambda_{\min}(\mu_i) \: = \: \lambda^{{\mathcal V}_j}_{\min}(\mu_i)$. 
	Thus, it is immediate from the definition of $H^{(j)}(\mu)$ in (\ref{def:H})
	that $H^{(j)}(\mu_i) = 0$, and if $\lambda_{\min}(\mu_i)$ is simple, $\nabla H^{(j)}(\mu_i) = 0$.
\end{proof}
\begin{remark}\label{rem:initialize}
We remark that the framework outlined in \Cref{alg:sf} starts with only one point $\mu_1$
and corresponding initial subspace ${\mathcal V}_1$ in lines \ref{line2} and \ref{line4}, respectively.
Alternatively, one can initiate the framework with multiple points in line \ref{line2}, say $\mu_{1,1}, \dots, \mu_{1,\eta}$
so that $P_1 \gets \{ \mu_{1,1}, \dots, \mu_{1,\eta} \}$, 
then compute the eigenvalues, eigenvectors at these points in line \ref{line3}, and form the initial subspace $\calV_1$ accordingly as
${\mathcal V}_1 \gets \oplus_{i=1}^\eta \text{span}\{ v_1(\mu_{1,i}), \:\, \dots \:\, , v_\ell(\mu_{1,i}) \} \,$,
as well as the corresponding matrix $V_1$ with orthonormal columns
in line \ref{line4}. Inequality constraints corresponding to these points with normals 
$\theta(\mu_{1,1}), \dots , \theta(\mu_{1,\eta})$ should also be incorporated into the
linear programs determining the lower bounds. Related to \Cref{thm:inter_0},
associated with these initialization points, we have $H^{(j)}(\mu_{1,i}) = 0$, and,
if $\lambda_{\min}(\mu_{1,i})$ is simple, we additionally have $\nabla H^{(j)}(\mu_{1,i}) = 0$ for $i = 1, \dots, \eta$. 
\end{remark}
\begin{remark}
In the optimization problem in (\ref{def:H}), the objective $H^{(j)}(\mu)$, a surrogate
for the absolute error, could also be replaced by its relative counterpart defined as
    \begin{equation}\label{eqn:rel:H}
        H_r^{(j)}(\mu)
        		\;\vcentcolon=\;
	\frac{\left(\lambda^{{\mathcal V}_j}_{\min}(\mu)	-	\lambda^{(j)}_{\rm{LB}}(\mu)\right)}
		{|\lambda^{\calV_j}_{\min}(\mu)|}.
    \end{equation}
This formulation is preferable when one is more interested in having error guarantees in terms of the relative actual error
    \begin{equation}\label{eqn:rel:err}
        {\calE}^{(j)}_r(\mu)\;\vcentcolon=\;
        {\calE}^{\calV_j}_r(\mu)
   	\;	\vcentcolon=	\;\frac{\left(
	\lambda^{{\mathcal V}_j}_{\min}(\mu)	-	\lambda_{\min}(\mu)\right)}{|\lambda^{\calV_j}_{\min}(\mu)|},
    \end{equation}
since \eqref{eqn:rel:H} is naturally an upper bound for \eqref{eqn:rel:err}. For simplicity, our theoretical
analysis in \Cref{sec4}, as well as the Lipschitz continuity arguments for the 
surrogate error in \Cref{sec5} on which it is based, are 
performed employing the absolute surrogate error $H^{(j)}(\mu)$
and the associated optimization problem in \eqref{def:H}, although it could
be naturally extended to this relative formulation based on the surrogate error $H_r^{(j)}(\mu)$ in (\ref{eqn:rel:H}). 
For instance, \Cref{thm:inter_0} also holds when the instances of $H^{(j)}$ are replaced
by $H^{(j)}_r$ under the additional condition that $\lambda_{\min}(\mu_i) \neq 0$.
\end{remark}
\begin{remark}[Dynamic choice of $\ell$]\label{rmk:adp:ell}
    In \Cref{alg:sf}, the value of $\ell$ is fixed over the iterations. However, the 
    separability of the smallest eigenvalue from the $(\ell+1)$-st smallest eigenvalue of $A(\mu)$ is crucial 
    for the convergence of the algorithm, i.e., we need that the condition 
    \begin{equation}\label{eqn:spe:gap:def}
	   \calG_\ell(\mu) \; \vcentcolon = \; \lambda_{\ell+1}(\mu)-\lambda_{\min}(\mu) \; > \;0,
    \end{equation}
    holds. Small values of $\calG_\ell(\mu)$ may influence the accuracy of the 
    lower bound 
    from a numerical perspective due to finite precision arithmetic. 
    One possibility to mitigate this issue is to dynamically choose $\ell$ along the iterations $j$ of \Cref{alg:sf}. To this end,
    we choose $\ell$ as follows:
    At iteration $j$ of \Cref{alg:sf} right before line~\ref{alg:large_eigs}, 
    we first set $\ell(j)=1$. If $\, \calG_{\ell(j)}(\mu_{j+1})$ is smaller than or equal to a user-prescribed value, 
    we then update $\ell(j)$ as $\ell(j) = \ell(j)+1$ and evaluate again $\calG_{\ell(j)}(\mu_{j+1})$. 
    We iterate until $\calG_{\ell(j)}(\mu_{j+1})$ is larger than the prescribed value. We note that a 
    necessary condition for the existence of $\ell\in\N$ such that \eqref{eqn:spe:gap:def} holds 
    is that $A(\mu)$ is not $zI_{\sn}$ for some $z\in{\mathbb C}$ for all $\mu\in\calD$. 
    However, for the subspace procedure to effectively provide small subspaces, it is crucial 
    that \eqref{eqn:spe:gap:def} is verified for $\ell\ll\sn$. It is feasible to set $\ell(j)>1$ by default, which would enhance the convergence of \Cref{alg:sf} with fewer iterations compared to when $\ell(j)=1$. However, this adjustment might cause the subspace $\calV_j$ to be augmented with eigenvectors that do not significantly contribute to the approximation of the smallest eigenvalues, thereby increasing the dimension of $\calV_j$ relative to the default setting of $\ell(j)=1$. An increased dimension of $\calV_j$ would decelerate both the offline and online phases, which is why this alternative is not favored. \\
    We conclude by noting that \cite[Sec.~2.1]{ManSZ25} provides an extensive discussion linking the subspace framework for computing the smallest eigenvalue of parametric Hermitian matrices with projection-based model order reduction in the context of parametric problems. From these connections, it becomes evident that eigenvalue crossings tend to slow down convergence and, in a natural way, lead to the construction of larger approximation subspaces.
\end{remark}


\section{Global Convergence of the Subspace Framework}\label{sec4}
Let us now consider the actual error $\mathcal{E}^{(j)}(\mu)$ as in \eqref{eqn:act:err}, 
and the maximal actual error 
\begin{equation}\label{eqn:max:act:err}
\overline{\mathcal{E}}^{(j)}
			\vcentcolon
		=	\max_{\mu \in \mathcal{D}} \: \mathcal{E}^{(j)}(\mu)
\end{equation}
of the reduced eigenvalue function $\lambda_{\min}^{\mathcal{V}_j}(\mu)$ at the end of the $j$-th
subspace iteration. Note that for all $\mu \in \mathcal{D}$, as 
$\lambda_{\min}^{\mathcal{V}_j}(\mu) \geq \lambda_{\min}(\mu) \geq \lambda^{(j)}_{\rm{LB}}(\mu)$, we have
\begin{equation}\label{eq:bound_acterror}
	H^{(j)}(\mu)		\;	\geq		\;		\mathcal{E}^{(j)}(\mu) 	\;		\geq		\;	0	\,	.
\end{equation}
Our convergence results are built on the following monotonicity assumption.
\begin{assumption}\label{ass:monotone}
The following inequality holds for every $\mu \in {\mathcal D}$ and every integer $j \geq 1$:
\[
	H^{(j+1)}(\mu)	\;	\leq	\;	 	H^{(j)}(\mu)		\:	.
\]
\end{assumption}
\begin{remark}
The above assumption may appear strong at first glance. Recalling
\[
	H^{(j)}(\mu)	 = 
			\lambda^{{\mathcal V}_j}_{\min}(\mu) - \lambda_{\rm{LB}}^{(j)}(\mu),
\]
we indeed have $\lambda^{{\mathcal V}_{j+1}}_{\min}(\mu) \leq \lambda^{{\mathcal V}_j}_{\min}(\mu)$,
since ${\mathcal V}_{j+1} \supseteq {\mathcal V}_j$ (by an argument similar to
that in (\ref{eq:monotone})). However, it does not seem clear that
$\lambda_{\rm{LB}}^{(j+1)}(\mu) \geq \lambda_{\rm{LB}}^{(j)}(\mu)$ holds due to the non-monotonicity of $\beta^{(i,j)}(\mu)$, defined in \eqref{eq:beta_i}, with respect to $j$.
One way of ensuring the satisfaction of \Cref{ass:monotone} is
to replace the lower bound $\lambda_{\rm{LB}}^{(j)}(\mu)$ with the lower bound
$\overline{\lambda}_{\rm{LB}}^{(j)}(\mu) := \max_{k = 1, \dots , j} \lambda_{\rm{LB}}^{(k)}(\mu)$
in line \ref{alg:sub_prob} of \Cref{alg:sf} and in the definition of $H^{(j)}(\mu)$
in (\ref{def:H}). 
However, \Cref{ass:monotone} is always satisfied after the first few iterations of \Cref{alg:sf}
in our numerical experiments, even without such a modification, and we always observe it when $H^{(j)}(\mu)$ is sufficiently small. 
\end{remark}

We are interested in showing the convergence results for the infinite dimensional case. With this aim we assume that
$A_i : \ell_2(\mathbb{N}) \rightarrow \ell_2(\mathbb{N})$ is a compact self-adjoint
operator for $i = 1, \dots, \kappa$.
Intuitively, each $A_i$ can be considered as an infinite-dimensional Hermitian
matrix. We assume, without loss of generality, that 
$A(\mu) = \sum_{i=1}^\kappa \theta_i(\mu) A_i$ has a negative eigenvalue for all $\mu$
for the well-posedness of $\lambda_{\min}(\mu)$ at every $\mu$ (i.e., 0 is
an accumulation point of the point spectrum of every compact self-adjoint operator
\cite[page 185, Thm.~6.26]{Kato1995}, \cite[Sec.~1.3]{KanMMM18}, so
the infimum of the eigenvalues of $A(\mu)$ is zero and not attained if all eigenvalues 
of $A(\mu)$ are positive).

The next result asserts that the maximal actual error $\overline{\mathcal{E}}^{(j)}$ 
defined as in \eqref{eqn:max:act:err} 
decays to zero in the infinite-dimensional setting in the limit as $j\rightarrow \infty$.
This main convergence result is proven
under the monotonicity assumption on $H^{(j)}(\mu)$ (i.e., \Cref{ass:monotone}), as well as 
assumptions that ensure the uniform Lipschitz continuity of $H^{(j)}(\mu)$ with respect to $j$
stated formally in the appendix (i.e., \Cref{ass:eig_sep} and \Cref{ass:svals_away_zero}).
\begin{theorem}\label{teo0}
	Suppose \Cref{alg:sf} in the infinite-dimensional setting described above
	generates a sequence $\{ \mu_j \}$ and a sequence of subspaces $\{ \mathcal{V}_j \}$
	such that \Cref{ass:monotone}, as well as \Cref{ass:eig_sep} and \Cref{ass:svals_away_zero}
	in \Cref{sec5} are satisfied. Then the sequences 
	$\{ \mu_j \}$ and  $\{ \mathcal{V}_j \}$ generated are such that
	\[
	\lim_{j \rightarrow \infty}  \overline{\mathcal{E}}^{(j)} 		\;	=	\;
	\lim_{j \rightarrow \infty}  \mathcal{E}^{(j)}(\mu_{j+1})		\;	=	\;	
	\lim_{j \rightarrow \infty}  H^{(j)}(\mu_{j+1})		\;	=	\;	0.
	\]
\end{theorem}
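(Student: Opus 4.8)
The plan is to show that the sequence of maximal errors $\varepsilon_j = \max_{\mu \in \calD} H^{(j)}(\mu)$ converges to zero by a standard compactness-plus-Lipschitz argument familiar from greedy/subspace convergence proofs. First I would record that by \Cref{ass:monotone} the sequence $\{\varepsilon_j\}$ is nonincreasing and bounded below by zero (using \eqref{eq:bound_acterror}), hence it converges to some limit $\varepsilon_\ast \geq 0$; the task is to prove $\varepsilon_\ast = 0$. Since $\mu_{j+1}$ is a maximizer of $H^{(j)}$ over the compact set $\calD$, we have $\varepsilon_j = H^{(j)}(\mu_{j+1})$, and by \eqref{eq:bound_acterror} also $\varepsilon_j \geq \mathcal{E}^{(j)}(\mu_{j+1}) \geq 0$, so it suffices to control $H^{(j)}(\mu_{j+1})$. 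Because $\calD$ is compact, the sequence $\{\mu_j\}$ has a convergent subsequence $\mu_{j_k} \to \mu_\ast \in \calD$; I would work along such a subsequence.

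\textbf{The core of the argument.} The assumptions \Cref{ass:eig_sep} and \Cref{ass:svals_away_zero} (referenced as giving uniform Lipschitz continuity of $H^{(j)}$ with respect to $j$) provide a constant $L$, independent of $j$, such that $|H^{(j)}(\mu) - H^{(j)}(\nu)| \leq L \|\mu - \nu\|$ for all $\mu, \nu \in \calD$ and all $j$. Now fix the convergent subsequence $\mu_{j_k} \to \mu_\ast$. For indices $k < k'$, the point $\mu_{j_{k'}}$ has already been inserted into the subspace $\mathcal{V}_{j_{k'} - 1} \supseteq \mathcal{V}_{j_k}$ — wait, more carefully: $\mu_{j_k}$ is an interpolation point for every iteration $j \geq j_k$, so by \Cref{thm:inter_0}(i) we have $H^{(j)}(\mu_{j_k}) = 0$ for all $j \geq j_k$. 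In particular, for $k' > k$ and $j = j_{k'} - 1 \geq j_k$,
\[
	\varepsilon_{j_{k'}-1} \; = \; H^{(j_{k'}-1)}(\mu_{j_{k'}}) \; = \; H^{(j_{k'}-1)}(\mu_{j_{k'}}) - H^{(j_{k'}-1)}(\mu_{j_k}) \; \leq \; L \, \| \mu_{j_{k'}} - \mu_{j_k} \|.
\]
Since $\{\mu_{j_k}\}$ is Cauchy, the right-hand side tends to zero as $k, k' \to \infty$ with $k' > k$; hence $\liminf_{j} \varepsilon_j = 0$. Combined with the fact that $\{\varepsilon_j\}$ is monotone nonincreasing (\Cref{ass:monotone}) and convergent, this forces $\varepsilon_\ast = \lim_j \varepsilon_j = 0$. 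Finally, since $0 \leq \mathcal{E}^{(j)}(\mu_{j+1}) \leq H^{(j)}(\mu_{j+1}) = \varepsilon_j$ and $0 \leq \overline{\mathcal{E}}^{(j)} \leq \varepsilon_j$, all three quantities in the statement are squeezed to zero.

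\textbf{The main obstacle} I anticipate is justifying the uniform-in-$j$ Lipschitz bound on $H^{(j)}$ cleanly — that is, making sure the Lipschitz constant $L$ supplied by \Cref{ass:eig_sep} and \Cref{ass:svals_away_zero} genuinely does not degrade as the subspaces grow. The term $\lambda^{\mathcal{V}_j}_{\min}(\mu)$ is Lipschitz with a constant controlled by $\|A(\mu)\|$-type quantities independent of $j$, which is fine; the delicate piece is the lower bound $\lambda^{(j)}_{\mathrm{LB}}(\mu) = f^{(j)}(\eta^{(j)}_\ast(\mu))$, whose constituents $\rho^{(j)}(\mu)$, $\eta^{(j)}_\ast(\mu)$ (a solution of a $\mu$-dependent linear program with $j$ growing constraints), and the nonsmooth $\min$ inside $f^{(j)}$ all depend on $j$. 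This is precisely why the theorem is stated conditionally on those appendix assumptions, so in the proof I would simply invoke them to extract the uniform constant $L$ and reference \Cref{sec5} for their justification. A secondary subtlety is the infinite-dimensional setting: one must ensure $\calD$ is still compact (it is, by hypothesis, a compact subset of $\R^p$) and that $\lambda_{\min}(\mu)$ is well-defined there, which is exactly what the standing assumption that $A(\mu)$ has a negative eigenvalue for all $\mu$ guarantees — so the compactness argument on $\{\mu_j\}$ goes through verbatim since the parameter domain, not the state space, is what we take subsequences in.
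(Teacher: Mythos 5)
Your proof is correct and follows essentially the same route as the paper: compactness of $\calD$ to extract a convergent subsequence of $\{\mu_j\}$, the uniform-in-$j$ Lipschitz bound on $H^{(j)}$ from \Cref{thm:main_Lips_C}, the interpolation property $H^{(j)}(\mu_i)=0$ from \Cref{thm:inter_0}, monotonicity of the maximal surrogate error, and the squeeze via \eqref{eq:bound_acterror}. The only (harmless) variation is that you apply the interpolation property directly at the later iteration, $H^{(j_{k'}-1)}(\mu_{j_k})=0$, whereas the paper evaluates it at iteration $\ell_j$ and then invokes the pointwise monotonicity of \Cref{ass:monotone} to pass to iteration $\ell_{j+1}-1$; both steps are valid and lead to the same conclusion.
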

\begin{proof}$\:$
	First, we show that $\lim_{j \rightarrow \infty}  H^{(j)}(\mu_{j+1}) = 0$. Since the members of the
sequence $\{ \mu_j \}$ lie in the compact set $\mathcal{D}$, it must have a convergent subsequence, say $\{ \mu_{\ell_j} \}$. Moreover, $H^{(\ell_j)}(\mu_{\ell_j}) = 0$ by \Cref{thm:inter_0}.
	By the uniform Lipschitz continuity of $H^{(j)}(\mu)$ over all $j$ 
	(i.e., by \Cref{thm:main_Lips_C} in \Cref{sec5}), there exists $\gamma$ such that for all $j$ we have
	\[
	H^{(\ell_j)}(\mu_{\ell_{j+1}})
		\;\;	=	\;\;
	| H^{(\ell_j)}(\mu_{\ell_{j+1}})  -   H^{(\ell_j)}(\mu_{\ell_j})   |
	\;\;	\leq		\;\;
	\gamma	\|   \mu_{\ell_{j+1}}  -   \mu_{\ell_j}	\|	\,	.
	\]
	Additionally, by \Cref{ass:monotone}, we have 
	$H^{(\ell_{j+1}-1)}(\mu_{\ell_{j+1}}) \leq H^{(\ell_j)}(\mu_{\ell_{j+1}})$ so that
	\[
	H^{(\ell_{j+1}-1)}(\mu_{\ell_{j+1}})	\;\;	\leq		\;\;	\gamma	\|   \mu_{\ell_{j+1}}  -   \mu_{\ell_j}	\|
	\quad\quad	\Longrightarrow		\quad\quad
	\lim_{j\rightarrow \infty}	\:
	H^{(\ell_{j+1}-1)}(\mu_{\ell_{j+1}})
	\;\;	=	\;\;	0.
	\]
  Now, since the sequence $\{ H^{(j)}(\mu_{j+1}) \} = \{ \max_{\mu \in \mathcal{D}} H^{(j)}(\mu) \}$ 
  is monotonically non-increasing by \Cref{ass:monotone}, and is bounded below by 0,
	it must be convergent. As shown above, the subsequence $\{ H^{(\ell_{j+1}-1)}(\mu_{\ell_{j+1}}) \}$
	of the convergent sequence $\{ H^{(j)}(\mu_{j+1}) \}$ converges to 0, so
	\begin{equation}\label{eq:Hj_conv}
		\lim_{j\rightarrow \infty}	\:	H^{(j)}(\mu_{j+1})		\;\;	=	\;\;	0
	\end{equation} 
	as well. By \eqref{eq:bound_acterror}, we have $0 \leq \mathcal{E}^{(j)}(\mu_{j+1}) \leq H^{(j)}(\mu_{j+1})$, which
	together with \eqref{eq:Hj_conv} imply $\lim_{j\rightarrow \infty}	\:	\mathcal{E}^{(j)}(\mu_{j+1}) = 0$.	
	Similarly, 
	\[
	0 \;\; \leq  \;\; \overline{\mathcal{E}}^{(j)} = \max_{\mu \in \mathcal{D}} \: \mathcal{E}^{(j)}(\mu) \;\;  \leq  \;\; 
	\max_{\mu \in \mathcal{D}} \: H^{(j)}(\mu) 	 = H^{(j)}(\mu_{j+1}) \, ,
	\]
	where the second inequality is again due to \eqref{eq:bound_acterror}. Hence, it follows again from
	\eqref{eq:Hj_conv} that $\lim_{j\rightarrow \infty} \: \overline{\mathcal{E}}^{(j)}  = 0$, completing the proof.
\end{proof}



\section{Approximation for the smallest singular value}\label{sec7}

When the matrices in the sum \eqref{int1} are not Hermitian, it is natural to replace the problem of approximating 
the smallest eigenvalue with the approximation of the smallest singular value $\sigma_{\min}(\mu)$
of the matrix $A(\mu)$. This reformulation arises naturally in connection with 
the a posteriori error estimate used to construct reduced spaces. Indeed, suppose we have 
\begin{equation*}
	A(\mu) x(\mu)\;=\;b(\mu)
\end{equation*}
with $A(\mu)$ invertible and non-Hermitian $\forall \mu \in \calD$. This could represent a linear system 
arising from the discretization of an advection-diffusion \PDE. If we inject the solution $\widetilde{x}(\mu)$, obtained 
by solving a reduced problem, we get
\begin{align*}
	r(\mu)\;=&\;A(\mu) \widetilde{x}(\mu)\;-b(\mu)
	\;=\;\;A(\mu) \left\{ \widetilde{x}(\mu)-x(\mu) \right\}	\:	,
\end{align*}
which, by taking the norms, leads to
\begin{equation}\label{eqn:aposterior:non:co}
    \|\widetilde{x}(\mu)-x(\mu)\|\;\le\;\|A(\mu)^{-1}\|\|r(\mu)\|=\frac{\|r(\mu)\|}{\sigma_{\min}(\mu)}	\:	.
\end{equation}
The expression \eqref{eqn:aposterior:non:co} gives the a posteriori error estimate for the discrete problem 
arising from a non-coercive \PDE. In the variational formulation discretization setting (e.g. \FEM, \RBM), the 
scaling factor in \eqref{eqn:aposterior:non:co} coincides with the discrete inf-sup stability constant, that 
is defined as
\begin{align}\label{sec7.7}
	\begin{aligned}
		\beta(\mu)
		\;\; \vcentcolon=&\;\;\min_{u\in{\mathbb C}^\sn,\;\|u\|=1} \max_{v\in {\mathbb C}^\sn\;\|v\|=1} \left| u^\ast A(\mu)v \right| \: .
	\end{aligned}
\end{align}
It is straightforward to verify that $\beta(\mu) = \sigma_{\min}(\mu)$. 
Thus, the expression in \eqref{sec7.7} suggests natural lower and upper bounds for $\sigma_{\min}(\mu)$; 
indeed, given two subspaces $\calU,\calV\subseteq{\mathbb C}^\sn$, we have
\begin{equation}\label{sec7.1}
\begin{split}
	\sigma_{\LB}(\mu)\;\vcentcolon=&\;\min_{u\in{\mathbb C}^\sn,\;\|u\|=1} 
			\max_{v\in \calV\;\|v\|=1}  \left| u^\ast A(\mu)v \right| \;\le\;\sigma_{\min}(\mu) , \\
	\sigma_{\UB}(\mu)\;\vcentcolon=&\; \min_{u\in\calU,\;\|u\|=1} 
			\max_{v\in {\mathbb C}^\sn\;\|v\|=1} \left| u^* A(\mu)v \right| \;\ge\;\sigma_{\min}(\mu).
\end{split}
\end{equation}
However, efficient computation of the bounds in \eqref{sec7.1} 
do not appear straightforward. 

On the other hand, the smallest singular value $\sigma_{\min}(A(\mu) V)$ of $A(\mu)V$ 
for a given matrix $V \in {\mathbb C}^{\sn \times \sd}$ with orthonormal columns yields 
an upper bound for $\sigma_{\min}(\mu)$. This conclusion can be drawn from the variational characterization
\begin{equation}\label{eq:sigma_ubound}
\begin{split}
	\sigma_{\min}(A^V_{\rm{R}}(\mu))\;	&	= \; \sqrt{\min_{v\in{\mathbb C}^{\sd},\;\|v\|=1} v^* V^* A(\mu)^* A(\mu)V v}	\\
	\;\; &	\ge \; \sqrt{\min_{v\in{\mathbb C}^\sn,\;\|v\|=1} v^* A(\mu)^* A(\mu) v}\;=\;\sigma_{\min}(\mu)	\,	,
\end{split}
\end{equation}
where
\begin{equation}\label{eq:sig_red}
	A^V_{\rm{R}}(\mu)
		\;	:=	\;
	A(\mu) V
		\;	=	\;
	\theta_1(\mu) A_1 V +	\dots		+	\theta_{\kappa}(\mu) A_{\kappa} V	\:	.
\end{equation}

\subsection{Standard approach for singular values}\label{sec:sv:issues}
In literature, there have been several attempts to approximate the smallest singular value 
of a parameter-dependent matrix on a domain via \SCM type-methods; 
see \cite{HuyKCHP10,RozPM13} and \cite[Sec.~5]{SirK16}. 
Some of them (e.g., \cite{SirK16}) rely on working with the Hermitian parameter-dependent matrix 
$\widehat{A}(\mu)\vcentcolon=A^\ast (\mu)A(\mu)$, that has an affine decomposition of type \eqref{int1} 
involving $\kappa(1+\kappa)/2$ Hermitian matrices. For instance, one can apply \Cref{alg:sf}  to 
$\widehat{A}(\mu) = A^\ast (\mu)A(\mu)$,
and generate a subspace to approximate $\lambda_{\min}(\widehat{A}(\mu))=\sigma^2_{\min}(A(\mu))$. 
However, such an approach has the following drawbacks.
\begin{enumerate}
   \item  The presence of ${\kappa(1+\kappa)}/2$ terms in the affine decomposition makes 
    the stable evaluation of the squared residual norm $\rho(\mu)^2$ more expensive in terms of computational time and storage costs. Indeed, the stable evaluation of $\rho(\mu)^2$ has a computational cost that scales as the square of the number of affine terms in the decomposition \cite{BuhEOR14}, i.e. $\calO(\kappa^4)$ with this approach. The effectiveness of \Cref{alg:sf} demands $\kappa^2\ll\sn$ when employing the stable residual assessment. In this scenario, the requirement becomes more stringent, specifically $\kappa^4\ll\sn$, thereby significantly limiting the range of problems that can be addressed using \Cref{alg:sf}. 

\item Linear programming (\LP) is frequently observed to have inadequate convergence in this context, meaning that the lower bound derived through \LP yields meaningful outcomes only when the subspace dimensionality no longer adheres to the condition $\sr\ll\sn$; see \Cref{figPS_BSb}. This occurs because $(A(\mu)^{*}A(\mu))$ maintains a symmetric positive semidefinite nature for every $\mu\in\R^{p}$, a critical constraint that is not clear how to transfer within the linear programming framework. Linear programming attempts to approximate the smallest eigenvalue over a set significantly larger than the joint numerical range, failing to incorporate $\lambda_1(A(\mu)^{*}A(\mu))\ge0$ for all $\mu\in\R^p$. Consequently, the lower bounds it produces tend to be overly pessimistic, often being less than or equal to zero. \label{it3} \\
\end{enumerate}

\subsection{A two-sided procedure}\label{sub:sec:6.2}
We propose a two-sided projection procedure that overcomes the downsides of the standard 
approach listed in \Cref{sec:sv:issues}; however, it does not guarantee a uniformly small error over the parametric domain.

Let $\calU, \; \calV$ be appropriately chosen 
subspaces of ${\mathbb C}^{\sn}$ of equal dimension, and $U$, $V$ be matrices 
whose columns form orthonormal bases for these subspaces. Let us consider
\begin{align}
	& \sigma^{{\mathcal V}}_{\min}(\mu)\;\vcentcolon=  \sigma_{\min}(A^V_{\rm{R}}(\mu))
	= \sqrt{  v^V_{\rm{UB}}(\mu)^\ast V^* A^\ast(\mu) A(\mu)V v^V_{\rm{UB}}(\mu)  }  \, ,  \label{eqn:SV:UB} \\
	& \sigma^{{\mathcal V}, {\mathcal U}}(\mu)\;\vcentcolon= 
	\sqrt{ 
			v^V_{\rm{UB}}(\mu)^\ast V^* A^\ast(\mu)UU^* A(\mu)V v^V_{\rm{UB}}(\mu)} \, , \label{eqn:SV:LB}
\end{align}
where $v^V_{\rm{UB}}(\mu)$ is a unit right singular vector corresponding to the smallest singular
value of $A^V_{\rm{R}}(\mu) = A(\mu) V$.
Note that the quantity $\sigma^{{\mathcal V}, {\mathcal U}}(\mu)$ is a lower bound for $\sigma^{{\mathcal V}}_{\min}(\mu)$,
which in turn is an upper bound for $\sigma_{\min}(\mu)$ by the arguments in (\ref{eq:sigma_ubound}).
The framework outlined in \Cref{alg:sv} is a greedy procedure based on these two quantities.
In line \ref{alg:large_sigv} of this description, $\sigma_{k}(\mu_{j+1})$ denotes the $k$-th smallest
singular value of $A(\mu_{j+1})$, and $u_k(\mu_{j+1})$, $v_k(\mu_{j+1})$ corresponding consistent
unit left, unit right singular vectors of $A(\mu_{j+1})$, respectively.
\begin{algorithm}[t]
	\begin{algorithmic}[1]
		\Require{The real analytic functions $\theta_i(\mu) : {\mathbb R}^{p} \rightarrow {\mathbb R}$,
			 matrices $A_i \in {\mathbb C}^{\sn \times \sn}$ for $i = 1, \dots , \kappa$
			s.t. $A(\mu) = \theta_1(\mu) A_1 + \dots + \theta_\kappa(\mu) A_\kappa$; compact
			domain ${\mathcal D} \subset {\mathbb R}^{p}$; $\ell\in\N$; termination tolerance $\varepsilon$.}
		\Ensure{A reduced matrix-valued function $A^V_{\rm{R}}(\mu)$ as in \eqref{eq:sig_red}
			and the subspace ${\mathcal V} = \text{Col} (V)$. }	
		\vskip .7ex
		\State{Choose the initial point $\mu_1$, and let $P_1 \gets \{ \mu_1 \}$.}
		\vskip .3ex
		\State{Compute $\sigma_{k}(\mu_{1})$, $u_{k}(\mu_{1})$, $v_{k}(\mu_{1})$
			for $k = 1, \dots , \ell$.}
		\vskip .3ex
		\State{$V_1 \gets 
			\text{orth}
			\left(
			\left[
			\begin{array}{ccc}
				v_1(\mu_{1})  &  \dots  & v_\ell(\mu_{1})
			\end{array}
			\right]
			\right) \;$
			 and
			$\;\: {\mathcal V}_1 \gets \text{span}\{ v_1(\mu_{1}), \dots , v_\ell(\mu_{1}) \}$.}
		\vskip .3ex
  		 \State{$U_1 \gets 
			\text{orth}
			\left(
			\left[
			\begin{array}{ccc}
				u_1(\mu_{1})  &  \dots  & u_\ell(\mu_{1})
			\end{array}
			\right]
			\right) \;$ and
			$\;\: {\mathcal U}_1 \gets \text{span}\{ u_1(\mu_{1}), \dots , u_\ell(\mu_{1}) \}$.}
		\vskip .3ex
		\For{$j = 1, 2, \dots$}
		\vskip .3ex
		\State Solve the maximization problem
		\vskip -2ex
		\[
			\max_{\mu \in {\mathcal D}} \;
			\sigma^{\calV_{j}}_{\min}(\mu)	\,  -  \:	\sigma^{\calV_{j},\calU_{j}}(\mu)	\: .
		\]
		\vskip -1.5ex
		\hskip -1ex 
		Let $\: \varepsilon_j := \max_{\mu \in {\mathcal D}} \;
				\sigma^{\calV_{j}}_{\min}(\mu)	\,  -  \:	\sigma^{\calV_{j},\calU_{j}}(\mu) \:$, $\:$and 
			$\; \mu_{j+1} := \argmax_{\mu \in {\mathcal D}} \;
				\sigma^{\calV_{j}}_{\min}(\mu)	\,  -  \:	\sigma^{\calV_{j},\calU_{j}}(\mu)$ . \label{alg:sub_prob:sv}
		\vskip .3ex
        		\If{	$\varepsilon_j \leq \varepsilon$}
                \State \textbf{Terminate} with
						$A^{V_{j}}_{\rm{R}}(\mu)$ and ${\mathcal V}_{j}$ 	. 	\label{linesv:termin}	
		\vskip .5ex
		\EndIf
		\vskip .3ex
      		\State Include $\mu_{j+1}$ in the set of points, i.e., $P_{j+1} \gets P_j \cup \{ \mu_{j+1} \}$.
		\vskip .3ex
		\State{Compute $\sigma_k(\mu_{j+1})$, $u_{k}(\mu_{j+1})$, $v_{k}(\mu_{j+1})$
			for $k = 1, \dots , \ell$.\label{alg:large_sigv}} 
		\vskip .6ex
		\State{$V_{j+1} \gets 
			\text{orth}
			\left(
			\left[
			\begin{array}{cccc}
				V_{j}  & v_1(\mu_{j+1})  &  \dots  &  v_\ell(\mu_{j+1})
			\end{array}
			\right]
			\right) \;$ and $\; {\mathcal V}_{j+1} \gets \text{Col}(V_{j+1})$. \label{alg_sg:expand}}
		\vskip .3ex
		\State{$U_{j+1} \gets 
			\text{orth}
			\left(
			\left[
			\begin{array}{cccc}
				U_{j}  & u_1(\mu_{j+1})  &  \dots  &  u_\ell(\mu_{j+1})
			\end{array}
			\right]
			\right) \;$ and  $\; {\mathcal U}_{j+1} \gets \text{Col}(U_{j+1})$. \label{alg_sg:expand2}}
		\vskip .3ex
		\EndFor
		\vskip .3ex
	\end{algorithmic}
	\caption{Subspace framework for uniform approximation of $\sigma_{\min}(\mu)$ over ${\mathcal D}$}
	\label{alg:sv}
\end{algorithm}
At iteration $j$ of the framework, we compute the parameter maximizing the absolute 
surrogate error
\begin{equation}\label{eqn:sure:err:sv}	
	S^{(j)}(\mu)
		\;	:=	\;
	\sigma^{{\mathcal V}_j}_{\min}(\mu) - \sigma^{{\mathcal V}_j , {\mathcal U}_j}(\mu)
\end{equation}
for subspaces ${\mathcal V}_j$ and ${\mathcal U}_j$ of equal dimension constructed so far. Then these subspaces
are expanded in lines \ref{alg_sg:expand}-\ref{alg_sg:expand2}
based on the left and right singular vectors of $A(\mu)$ at the maximizing parameter value,
reminiscent of the subspace expansion strategy in \Cref{alg:sf} to approximate the smallest eigenvalue.
An alternative to the absolute surrogate error $S^{(j)}(\mu)$ at iteration $j$ is its relative counterpart defined as
\begin{equation}\label{eqn:sure:err:sv_rel}	
	S^{(j)}_r(\mu)\;\vcentcolon=\; 
	\frac{\left( \sigma^{{\mathcal V}_j}_{\min}(\mu) - \sigma^{{\mathcal V}_j , {\mathcal U}_j}(\mu) \right)}
			{\sigma^{{\mathcal V}_j}_{\min}(\mu)}.
\end{equation}
\Cref{alg:sv} can also be applied so that it is driven by the relative surrogate error,
in particular by maximizing $S^{(j)}_r(\mu)$ rather than $S^{(j)}(\mu)$ in line \ref{alg:sub_prob:sv}, and letting
$\varepsilon_j$, $\mu_{j+1}$ be the maximal value and the maximizer of $S^{(j)}_r(\mu)$.
Again, we note that \Cref{alg:sv} can be initialized with multiple points $\mu_{1,1}, \dots, \mu_{1,\eta}$
rather than only with $\mu_1$.

\begin{remark}
Note that $S_r^{(j)}(\mu)$ can be interpreted as a relative squared residual norm, in a similar fashion to $\rho^{(j)}(\mu)^2$. Indeed, it holds that 
   \begin{align}\label{eqn:rel:err:sv}
\begin{aligned}
    \frac{\|U_j^{\perp}A(\mu)V_jv^{V_j}_{\UB}(\mu)\|^2}{\sigma^{\calV_j}_{\min}(\mu)^2}\;=&\;\frac{\left(\sigma^{\calV_j}_{\min}(\mu)^2-\sigma^{\calV_j,\calU_j}_{\min}(\mu)^2\right)}{\sigma^{\calV_j}_{\min}(\mu)^{2}}  \;\\
    \ge&	 \; 
	\frac{\sigma^{\calV_j}_{\min}(\mu)-\sigma^{\calV_j,\calU_j}_{\min}(\mu)}{\sigma^{\calV_j}_{\min}(\mu)}\frac{\left(\sigma^{\calV_j}_{\min}(\mu)+\sigma^{\calV_j,\calU_j}_{\min}(\mu)\right)}{\left(\sigma^{\calV_j}_{\min}(\mu)+\sigma^{\calV_j,\calU_j}_{\min}(\mu)\right)}\;=\; S_r^{(j)}(\mu) \\
    \ge& \; \frac{\left(\sigma^{\calV_j}_{\min}(\mu)^2-\sigma^{\calV_j,\calU_j}_{\min}(\mu)^2\right)}{2\sigma^{\calV_j}_{\min}(\mu)^{2}} 
    \;=\; \frac{\|U_j^{\perp}A(\mu)V_jv^{V_j}_{\UB}(\mu)\|^2}{2\sigma^{\calV_j}_{\min}(\mu)^2}.
\end{aligned}
\end{align}
\end{remark}
\Cref{alg:sv} overcomes the issue listed since it does not evaluate terms that require $\calO(\kappa^4)$ floating point operations or storage memory costs and does not call any \LP routine. The downside is that, even though $\sigma^{{\mathcal V}, {\mathcal U}}(\mu)$ in \eqref{eqn:SV:LB}
is a lower bound for $\sigma^{{\mathcal V}}(\mu)$, it is not, in general, 
a lower bound for $\sigma_{\min}(\mu)$. Thus,  the termination criterion of \Cref{alg:sv} in line \ref{linesv:termin}
does not necessarily yield an upper bound for the actual error 
\begin{equation*}
	\max_{\mu \in {\mathcal D}} \;\left( \sigma^{{\mathcal V}_{j}}_{\min}(\mu) - \sigma_{\min}(\mu)\right)	\: 
\end{equation*} 
(or $\: \max_{\mu \in {\mathcal D}} \; \{ \sigma^{{\mathcal V}_{j}}_{\min}(\mu) - \sigma_{\min}(\mu) \} /
				\sigma^{{\mathcal V}_{j}}_{\min}(\mu) \:$
if the relative surrogate error $S^{(j)}_r(\mu)$ is used). However, in the next section, we show that the subspace constructed through \Cref{alg:sv} is such that the error $\sigma^{{\mathcal V}_{j}}_{\min}(\mu) - \sigma_{\min}(\mu)$ 
(or $\{ \sigma^{{\mathcal V}_{j}}_{\min}(\mu) - \sigma_{\min}(\mu) \} /
				\sigma^{{\mathcal V}_{j}}_{\min}(\mu)$) and its gradient
are generically zero at the selected interpolation points $\mu_1, \dots ,\mu_j$. 
Thus, replacing $\sigma_{\min}(\mu)$ with $\sigma^{\calV_j}_{\min}(\mu)$ is equivalent to using a Hermite-interpolant function, 
where the interpolation points are constructed in a greedy fashion by minimizing a residual type surrogate error function.

\subsection{Hermitian interpolation results for the bounds}
In this section, we show that the functions $\sigma^{{\mathcal V}_j}_{\min}(\mu)$ 
and $\sigma^{{\mathcal V}_j, {\mathcal U}_j}(\mu)$ at the $j$-th subspace iteration 
of \Cref{alg:sv} Hermite interpolate the actual smallest singular value function $\sigma_{\min}(\mu)$
at the points $\mu_1, \dots \mu_j$. 
\begin{theorem}
    The sequence of subspaces $\{ {\mathcal V}_j \}$, $\{ {\mathcal U}_j \}$, and the points $\{ \mu_j \}$
    by \Cref{alg:sv} are such that 
    \begin{equation}\label{eqn:int:UB}
       		\sigma^{{\mathcal V}_j}_{\min}(\mu_i) \;=\; \sigma_{\min}(\mu_i)
    \end{equation}
    for $i = 1, \dots , j$. 
    Moreover, if $\sigma_{\min}(\mu_i)$ is simple, then
     \begin{equation}\label{eqn:int:LB}
		\sigma^{{\mathcal V}_j , {\mathcal U}_j}(\mu_i) \; = \; \sigma_{\min}(\mu_i),
    \end{equation}
    and if $\sigma_{\min}(\mu_i)$ is simple and nonzero, then
     \begin{subequations}
       \begin{align}
  &		 \nabla\sigma^{{\mathcal V}_j}_{\min}(\mu_i)	\;=\;	\nabla\sigma_{\min}(\mu_i), \label{eqn:int:der:UB}  \\[.25em]
  &		 \nabla\sigma^{{\mathcal V}_j , {\mathcal U}_j}(\mu_i)	\;=\; \nabla\sigma_{\min}(\mu_i) \label{eqn:int:der:LB}
        \end{align}
\end{subequations}
for $i = 1, \dots , j$.       
\end{theorem}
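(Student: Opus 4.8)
The plan is to prove the three interpolation claims in turn, deriving the derivative statements from the elementary fact that a differentiable, nonnegative function which attains the value $0$ at $\mu_i$ has vanishing gradient there.

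\emph{Identity \eqref{eqn:int:UB}.} By construction of \Cref{alg:sv}, the subspace $\mathcal{V}_j$ contains the unit right singular vector $v_1(\mu_i)$ of $A(\mu_i)$ associated with $\sigma_{\min}(\mu_i) = \sigma_1(\mu_i)$, and $\mathcal{U}_j$ contains the consistent left singular vector $u_1(\mu_i)$. Writing $V_j$ for a matrix with orthonormal columns spanning $\mathcal{V}_j$, there is a unit $w$ with $V_j w = v_1(\mu_i)$, so \eqref{eq:sigma_ubound} gives $\sigma^{\mathcal{V}_j}_{\min}(\mu_i)^2 \leq w^\ast V_j^\ast A(\mu_i)^\ast A(\mu_i) V_j w = \sigma_1(\mu_i)^2$; the reverse inequality is also in \eqref{eq:sigma_ubound}, and \eqref{eqn:int:UB} follows. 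Moreover, if $\sigma_{\min}(\mu_i)$ is simple, then so is $\sigma^{\mathcal{V}_j}_{\min}(\mu_i)$ as a singular value of $A(\mu_i)V_j$: a two-dimensional right singular subspace $\mathcal{W}$ would produce, since $V_j$ has orthonormal columns and $\sigma^{\mathcal{V}_j}_{\min}(\mu_i) = \sigma_{\min}(\mu_i)$, a two-dimensional subspace $V_j\mathcal{W}$ of $\mathbb{C}^{\sn}$ on which $A(\mu_i)$ attains its minimal gain $\sigma_{\min}(\mu_i)$, i.e. contained in the right singular subspace of $A(\mu_i)$ for $\sigma_{\min}(\mu_i)$, contradicting simplicity. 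Hence $\widehat{v} := V_j v^{V_j}_{\mathrm{UB}}(\mu_i)$ is a unit vector in that one-dimensional subspace, so $\widehat{v} = c\, v_1(\mu_i)$ with $|c| = 1$.

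\emph{Identity \eqref{eqn:int:LB}.} Assume $\sigma_{\min}(\mu_i)$ is simple. Using $\widehat{v} = c\, v_1(\mu_i)$, the consistency relation $A(\mu_i) v_1(\mu_i) = \sigma_1(\mu_i) u_1(\mu_i)$, and $u_1(\mu_i) \in \mathcal{U}_j = \text{Col}(U_j)$, definition \eqref{eqn:SV:LB} yields
\[
	\sigma^{\mathcal{V}_j,\mathcal{U}_j}(\mu_i)^2 = \bigl\| U_j^\ast A(\mu_i)\widehat{v} \bigr\|^2 = \sigma_1(\mu_i)^2\, \bigl\| U_j^\ast u_1(\mu_i) \bigr\|^2 = \sigma_1(\mu_i)^2,
\]
since $U_j U_j^\ast u_1(\mu_i) = u_1(\mu_i)$, which is \eqref{eqn:int:LB}. \emph{Derivative identities.} Suppose now in addition $\sigma_{\min}(\mu_i) \neq 0$. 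Since $\sigma_{\min}(\mu_i)$ is a simple nonzero singular value, $\sigma_{\min}(\mu)^2 = \lambda_{\min}(A(\mu)^\ast A(\mu))$ is real-analytic and positive near $\mu_i$, hence so is $\sigma_{\min}(\mu)$; likewise the smallest singular value of $A(\mu)V_j$ stays simple near $\mu_i$ by continuity, so $\sigma^{\mathcal{V}_j}_{\min}(\mu)^2$ is real-analytic there and a smooth unit right singular vector $v^{V_j}_{\mathrm{UB}}(\mu)$ can be chosen; as $\sigma^{\mathcal{V}_j}_{\min}(\mu)^2 = \|A(\mu)V_j v^{V_j}_{\mathrm{UB}}(\mu)\|^2$ and $\sigma^{\mathcal{V}_j,\mathcal{U}_j}(\mu)^2 = \|U_j^\ast A(\mu)V_j v^{V_j}_{\mathrm{UB}}(\mu)\|^2$ are invariant under the phase of $v^{V_j}_{\mathrm{UB}}(\mu)$, both are smooth near $\mu_i$. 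The two functions $\sigma^{\mathcal{V}_j}_{\min}(\mu)^2 - \sigma_{\min}(\mu)^2$ and $\sigma^{\mathcal{V}_j}_{\min}(\mu)^2 - \sigma^{\mathcal{V}_j,\mathcal{U}_j}(\mu)^2$ are nonnegative on $\mathcal{D}$ (the first by \eqref{eq:sigma_ubound}, the second because $U_j U_j^\ast$ is an orthogonal projector) and vanish at $\mu_i$ by \eqref{eqn:int:UB} and \eqref{eqn:int:LB}; hence $\mu_i$ minimizes each, so the gradients vanish at $\mu_i$, giving $\nabla\bigl(\sigma^{\mathcal{V}_j}_{\min}(\mu_i)^2\bigr) = \nabla\bigl(\sigma_{\min}(\mu_i)^2\bigr) = \nabla\bigl(\sigma^{\mathcal{V}_j,\mathcal{U}_j}(\mu_i)^2\bigr)$. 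Applying the chain rule and dividing by $2\sigma_{\min}(\mu_i) = 2\sigma^{\mathcal{V}_j}_{\min}(\mu_i) = 2\sigma^{\mathcal{V}_j,\mathcal{U}_j}(\mu_i) \neq 0$ (using \eqref{eqn:int:UB}, \eqref{eqn:int:LB} and the nonvanishing hypothesis) yields \eqref{eqn:int:der:UB} and \eqref{eqn:int:der:LB}.

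The main obstacle is the regularity bookkeeping in the last step: one must verify that the smallest singular value of $A(\mu)V_j$ stays simple in a neighborhood of $\mu_i$ (so that the phase-free quantities above are differentiable there) and that $\sigma_{\min}(\mu)$ and $\sigma^{\mathcal{V}_j}_{\min}(\mu)$ are genuinely differentiable at $\mu_i$, so the chain rule is legitimate; the rest is a short computation. For \eqref{eqn:int:der:UB} alone one can bypass this by applying the Hermite interpolation property \eqref{eq:interpolate} directly to the Hermitian analytic matrix-valued function $\widehat{A}(\mu) = A(\mu)^\ast A(\mu)$ with the subspace $\mathcal{V}_j$, since $\lambda_{\min}(\widehat{A}(\mu_i)) = \sigma_{\min}(\mu_i)^2$ is simple with eigenvector $v_1(\mu_i) \in \mathcal{V}_j$.
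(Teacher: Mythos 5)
Your proofs of \eqref{eqn:int:UB} and \eqref{eqn:int:LB} follow essentially the same route as the paper: the two-sided variational argument using $v_1(\mu_i)\in\calV_j$, and the direct computation $\|U_j^\ast A(\mu_i)V_j v^{V_j}_{\UB}(\mu_i)\| = \sigma_{\min}(\mu_i)\|U_j^\ast u_1(\mu_i)\| = \sigma_{\min}(\mu_i)$ using simplicity to identify $V_j v^{V_j}_{\UB}(\mu_i)$ with $c\,v_1(\mu_i)$; your explicit Rayleigh-quotient argument that simplicity of $\sigma_{\min}(\mu_i)$ forces simplicity of the reduced smallest singular value is a nice addition that the paper only asserts.

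For the gradient identities \eqref{eqn:int:der:UB}--\eqref{eqn:int:der:LB} you take a genuinely different route. The paper differentiates explicitly: it uses the analytic formula $\partial\sigma_{\min}/\partial\mu^{(\ell)}(\mu_i)=\real\bigl(u_1(\mu_i)^\ast\,\partial A/\partial\mu^{(\ell)}(\mu_i)\,v_1(\mu_i)\bigr)$, identifies $(\alpha,u_1(\mu_i))$ as a consistent singular pair of $A(\mu_i)V_j$, and for the lower bound differentiates \eqref{eqn:SV:LB} directly, killing the term involving $\partial v^{V_j}_{\UB}/\partial\mu^{(\ell)}$ via $\real\bigl(v^{V_j}_{\UB}{}^\ast\,\partial v^{V_j}_{\UB}/\partial\mu^{(\ell)}\bigr)=0$ and $u_1(\mu_i)=U_jU_j^\ast u_1(\mu_i)$. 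You instead exploit first-order optimality: the squared differences $\sigma^{\calV_j}_{\min}(\cdot)^2-\sigma_{\min}(\cdot)^2$ and $\sigma^{\calV_j}_{\min}(\cdot)^2-\sigma^{\calV_j,\calU_j}(\cdot)^2$ are nonnegative, smooth near $\mu_i$ (by local persistence of simplicity and phase-invariance), and vanish at $\mu_i$, so their gradients vanish there; the chain rule with $\sigma_{\min}(\mu_i)\neq 0$ then yields both identities, and you correctly note the alternative of invoking \eqref{eq:interpolate} for $\widehat A(\mu)=A(\mu)^\ast A(\mu)$ to get \eqref{eqn:int:der:UB}. This is valid and arguably cleaner conceptually, at the price of the regularity bookkeeping you flag (which the paper also needs implicitly, since it differentiates $v^{V_j}_{\UB}(\mu)$). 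Two small points to tighten: (i) the interpolation points $\mu_i$ produced by the algorithm frequently lie on $\partial\calD$, so the ``$\mu_i$ minimizes a nonnegative function, hence the gradient vanishes'' step should invoke the fact that the inequalities $\sigma^{\calV_j}_{\min}(\mu)\ge\sigma_{\min}(\mu)$ and $\sigma^{\calV_j}_{\min}(\mu)\ge\sigma^{\calV_j,\calU_j}(\mu)$ hold for all $\mu\in\R^p$ (they do, since the variational bound and the projector inequality are not restricted to $\calD$), so that $\mu_i$ is an unconstrained local minimizer; (ii) near $\mu_i$ the quantity $\sigma^{\calV_j,\calU_j}(\mu)$ should be fixed via the smooth local choice of the singular-vector projector (as you indicate), since away from simplicity it depends on the choice of $v^{V_j}_{\UB}(\mu)$. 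With these remarks, your argument is complete.
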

\begin{proof}$\:$
    We start by showing \eqref{eqn:int:UB}. 
    To this end, observe that for every $\mu \in {\mathcal D}$ we have
    \begin{equation}\label{eq:sv_monotone}
    	\sigma_{\min}(\mu) \;\; = \min_{v \in {\mathbb C}^{\sn}, \| v \| = 1} \| A(\mu) v \|
		\; \leq \;	
	\min_{v \in {\mathcal V}_j, \| v \| = 1} \| A(\mu) v \|		\; = \;\;	\sigma^{{\mathcal V}_j}_{\min}(\mu).
    \end{equation}
    In particular, $\sigma_{\min}(\mu_i) \leq \sigma^{{\mathcal V}_j}_{\min}(\mu_i)$.
    Additionally, as $v_1(\mu_i) \in {\mathcal V}_j$, we deduce
    \[
    		\sigma_{\min}(\mu_i)		\;	=	\;
		\| A(\mu_i) v_1(\mu_i) \|		\;	\geq		\;
		\min_{v \in {\mathcal V}_j, \| v \| = 1} \| A(\mu_i) v \|	\;	=	\;
		\sigma^{{\mathcal V}_j}_{\min}(\mu_i).
    \]
    \vskip -1.9ex
    \noindent
    Hence, $\sigma_{\min}(\mu_i) = \sigma^{{\mathcal V}_j}_{\min}(\mu_i)$ as desired.

    Let us next prove (\ref{eqn:int:der:UB}). The simplicity of $\sigma_{\min}(\mu_i)$ implies
    the simplicity of $\sigma^{{\mathcal V}_j}_{\min}(\mu_i)$. Moreover, by assumption and \eqref{eqn:int:UB},
    we have $\sigma^{{\mathcal V}_j}_{\min}(\mu_i) \;=\; \sigma_{\min}(\mu_i) \neq 0$. Consequently, 
    both $\sigma^{{\mathcal V}_j}_{\min}(\mu)$ and $\sigma_{\min}(\mu)$ are differentiable at $\mu_i$.
    Since $\sigma_{\min}(\mu_i)$ is a singular value of $A(\mu_i)$ with $u_1(\mu_i), v_1(\mu_i)$
    corresponding consistent unit left, unit right singular vectors, we have
    \begin{equation}\label{eq:sval_defn}
    		A(\mu_i) v_1(\mu_i) = \sigma_{\min}(\mu_i) u_1(\mu_i)	 \:	,	\quad	
		 u^\ast_1(\mu_i) A(\mu_i) = \sigma_{\min}(\mu_i) v_1(\mu_i)^\ast	\,	.
    \end{equation}
    As $v_1(\mu_i) \in {\mathcal V}_j$, there is a unit vector $\alpha$ such that $v_1(\mu_i) = V_j \alpha$, and the equations above can be rewritten as
    \[
    		A(\mu_i) V_j \alpha = \sigma_{\min}(\mu_i) u_1(\mu_i)	 \:	,	\quad
		 u^\ast_1(\mu_i) A(\mu_i) = \sigma_{\min}(\mu_i) \alpha^\ast V_j^\ast \: ,
    \]
    implying
    \[
    		A(\mu_i) V_j \alpha = \sigma_{\min}(\mu_i)	u_1(\mu_i)	 \:	,	\quad
		u^\ast_1(\mu_i) A(\mu_i) V_j  =  \sigma_{\min}(\mu_i) \alpha^\ast.
    \]
    This shows that $\alpha$ and $u_1(\mu_i)$ are consistent unit right and unit left singular vectors of 
    $A^{V_j}_{\rm{R}}(\mu_i) = A(\mu_i) V_j$
    corresponding to its smallest singular value $\sigma^{{\mathcal V}_j}_{\min}(\mu_i) \;=\; \sigma_{\min}(\mu_i)$.
    Using the analytical formulas for the derivative of a singular value function 
    (see for instance \cite[Sec.~3.3]{MenYK14}, \cite[Lem.~1]{GugLM21}), we deduce
    \begin{equation}
    \begin{split}
          \frac{\partial \sigma_{\min}}{\partial \mu^{(\ell)} }(\mu_i)
          	\;	&	=	\;
	    \real\left( u^\ast_1(\mu_i) \: \frac{\partial A \;}{\partial \mu^{(\ell)} }(\mu_i) \: v_1(\mu_i) \right)	
	    		\;	=	\;
	   \real\left( u^\ast_1(\mu_i) \: \frac{\partial A \;}{\partial \mu^{(\ell)} }(\mu_i) \: V_j \alpha \right)	\\
	    		&	=	\;
		\real\left( u^\ast_1(\mu_i) \: \frac{\partial A^{V_j}_{\rm{R}} \;}{\partial \mu^{(\ell)} }(\mu_i) \: \alpha \right)
		\;	=	\;	
	\frac{\partial \sigma^{{\mathcal V}_j}_{\min}}{\partial \mu^{(\ell)} }(\mu_i)
     \end{split}
     \end{equation}
     for $\ell = 1, \dots , p$. This proves (\ref{eqn:int:der:UB}).
     
     For proving (\ref{eqn:int:LB}), as shown in the previous paragraph $\alpha$ such that
     $v_1(\mu_i) = V_j \alpha$ and $u_1(\mu_i)$ form
     a pair of consistent unit right and unit left singular vectors of $A^{V_j}_{\rm{R}}(\mu_i) = A(\mu_i) V_j$
     corresponding to its smallest singular value $\sigma^{{\mathcal V}_j}_{\min}(\mu_i)$.
     Since $\sigma^{{\mathcal V}_j}_{\min}(\mu_i)$ is simple, we can assume, without loss of generality, 
     that $v_{\UB}^{V_j}(\mu) = \alpha$ and $u_1(\mu_i)$ is the corresponding consistent left
     singular vector of $A^{V_j}_{\rm{R}}(\mu_i)$ (more generally $v_{\UB}^{V_j}(\mu) = c \alpha$ with 
     the corresponding consistent left singular vector $c u_1(\mu_i)$ for some 
     $c \in {\mathbb C}$ such that $| c | = 1$,
     and the subsequent arguments still apply). Hence, we have
     \begin{equation*}
     	\begin{split}
    	\sigma^{{\mathcal V}_j,\calU_j}(\mu_i)
			\;	&	=	\;
	\sqrt{\alpha^*V_j^\ast A(\mu_i)^*U_jU_j^*A(\mu_i)V_j \alpha}	
			\;		=	\;
	\sqrt{v_1(\mu_i)^\ast A(\mu_i)^*U_jU_j^*A(\mu_i)v_1(\mu_i)}	\\	
			\;	&	=	\;
	\sqrt{\sigma_{\min}(\mu_i)^2 u_1(\mu_i)^*U_jU_j^*u_1(\mu_i)}\;=\;\sigma_{\min}(\mu_i),
	\end{split}
  \end{equation*}
  where the third equality is due to the left-hand equality in (\ref{eq:sval_defn}), and
  the last equality follows from $u_1(\mu_i)\in\calU_j$ so that $u_1(\mu_i)=$~$U_jU_j^* u_1(\mu_i)$. 
    
Finally, to show \eqref{eqn:int:der:LB}, for any $\mu \in {\mathcal D}$ such that
$\sigma^{{\mathcal V}_j}_{\min}(\mu)$
is simple and nonzero, we have
  \begin{equation}\label{eqn:der:eig:rig}
      0\;=\;\frac{\partial}{\partial \mu^{(\ell)}}\left(v_{\UB}^{V_j}(\mu)^*v_{\UB}^{V_j}(\mu)\right)
      			\;=\;
	2\real{\left(v_{\UB}^{V_j}(\mu)^* \frac{\partial v_{\UB}^{V_j}}{\partial \mu^{(\ell)}}(\mu)\right)} 
		\:	
  \end{equation}
  for $\ell = 1, \dots , p$.
  Moreover, for such a $\mu \in {\mathcal D}$,
   \begin{equation}\label{eq:sval_Hermite}
    \begin{split}
        \frac{\partial \sigma^{{\mathcal V}_j,\calU_j}}{\partial \mu^{(\ell)} }(\mu)
        		\;	&=	\;
	\frac{\partial }{\partial \mu^{(\ell)}}
	\left(\sqrt{v^{V_j}_{\UB}(\mu)^*V_j^*A(\mu)^*U_jU_j^*A(\mu)V_j v^{V_j}_{\UB}(\mu)} \right) 	\\
        		& = 
	\;\frac{1}{2\sigma^{{\mathcal V}_j,\calU_j}(\mu)}
	\left\{ 2\real{\left(v^{V_j}_{\UB}(\mu)^*V_j^* A(\mu)^* U_jU_j^* \frac{\partial A}
							{\partial \mu^{(\ell)}}(\mu) \, V_j v^{V_j}_{\UB}(\mu)\right)}	\right.	\\
		&	\quad\quad\quad\quad\quad\quad\quad	+	\left.
		2\real{\left(v^{V_j}_{\UB}(\mu)^*V_j^*A(\mu)^*U_jU_j^*
					A(\mu) V_j\frac{\partial v^{V_j}_{\UB}}{\partial \mu^{(\ell)}}(\mu) \right)}\right\}
    \end{split}
\end{equation}
for $\ell = 1, \dots , p$. Now take any $\mu_i, \: i \in \{1, 2, \dots , j \}$.
As in the previous paragraph, we assume, without loss of generality due to the simplicity assumption, 
$v^{V_j}_{\UB}(\mu_i) = \alpha$, where $\alpha$ is such that $v_1(\mu_i) = V_j \alpha$, and is a right singular vector of 
$A^{V_j}_{\rm{R}}(\mu_i)$ corresponding to its smallest singular value $\sigma^{{\mathcal V}_j}_{\min}(\mu_i)$.
In (\ref{eq:sval_Hermite}), the term in the last line is zero when $\mu = \mu_i$, since
\begin{equation}\label{eqn:zero:term}
\begin{split} 
\hskip -1.5ex
 	&
		\real{\left(v^{V_j}_{\UB}(\mu_i)^*V_j^*A(\mu_i)^*U_jU_j^*A(\mu_i)V_j\frac{\partial v_{\UB}^{V_j}}
					{\partial \mu^{(\ell)}}(\mu_i)\right)}		\;	\\
\hskip -1.5ex &	\;\;\;\;		=
 	\;\sigma_{\min}(\mu_i)
	\real{\left(u_1(\mu_i)^*U_jU_j^{*}A(\mu_i)V_j\frac{\partial v_{\UB}^{V_j}}{\partial \mu^{(\ell)}}(\mu_i)\right)}	\\
\hskip -1.5ex & 	\;\;\;\;		=
	\;\sigma_{\min}(\mu_i)^2 \real{\left(v_1(\mu_i)^{*}V_j\frac{\partial v_{\UB}^{V_j}}{\partial \mu^{(\ell)}}(\mu_i)\right)}   
		\:	=	\:
	\sigma_{\min}(\mu_i)^2 	\,
	\real{\left(v^{V_j}_{\UB}(\mu_i)^*\frac{\partial v_{\UB}^{V_j}}{\partial \mu^{(\ell)}}(\mu_i)\right)}
				\: = \:		0		
\end{split}
\end{equation}
for $\ell = 1, \dots , p$,
where the first equality follows from (\ref{eq:sval_defn}), in the second equality we use
$u_1(\mu_i)=U_jU_j^{*}u_1(\mu_i)$ since $u_1(\mu_i)\in\calU_{j}$, as well as (\ref{eq:sval_defn}),
and the last equality is due to \eqref{eqn:der:eig:rig}.
Consequently, it follows from (\ref{eq:sval_Hermite}),
using also the interpolation property $\sigma^{{\mathcal V}_j,\calU_j}(\mu_i) = \sigma_{\min}(\mu_i)$
proven in the previous paragraph, as well as steps similar to those in (\ref{eqn:zero:term}),
\begin{equation*}
   \begin{split}
    	&\frac{\partial \sigma^{{\mathcal V}_j,\calU_j}}{\partial \mu^{(\ell)} }(\mu_i)
		\;	=	\;
	\frac{1}{\sigma_{\min}(\mu_i)}
	\real{\left(v_1(\mu_i)^*A(\mu_i)^*U_jU_j^*\frac{\partial A}{\partial \mu^{(\ell)}}(\mu_i)v_1(\mu_i)\right)} \\
    			=	&\;
	\real{\left(u_1(\mu_i)^*U_jU_j^*\frac{\partial A}{\partial \mu^{(\ell)}}(\mu_i)v_1(\mu_i)\right)}	\;=	\;
	\real{\left(u_1(\mu_i)^*\frac{\partial A}{\partial \mu^{(\ell)}}(\mu_i)v_1(\mu_i)\right)}
			\;=\;
	\frac{\partial \sigma_{\min}}{\partial \mu^{(\ell)} }(\mu_i)
    \end{split}
\end{equation*}
for $\ell = 1, \dots, p$ as desired.
\end{proof}

\begin{remark}
When can we expect $S^{(j)}_r(\mu)$
to be a reliable error indicator, and when not? To understand this, consider the following example
\begin{equation}
    A(\mu)\;=\;\mu\begin{bmatrix}
        1&0\\
        0&0
    \end{bmatrix}+\mu^3\begin{bmatrix}
        0&0\\
        0&1
    \end{bmatrix},\quad\quad\mu\in\calD=[0.5,1.5],
\end{equation}
and let us suppose that the subspace $\calV_j \equiv\calU_j =\text{span}([0,1]^{\T})$. Therefore, 
$\sigma^{\calV_j}_{\min}(\mu)=\mu^3$ and $\|U_j^{\perp}A(\mu)V_jv^{V_j}_{\UB}(\mu)\|=S^{(j)}_r(\mu)=0\,$  for every $\mu\in\calD$, ensuring the convergence of \Cref{alg:sv}. Nonetheless, $\sigma_{\min}(\mu)=\min(\mu^3,\mu)$, which implies that \Cref{alg:sv} is unable to identify that the approximation is inadequate for $\mu\in(1,1.5]$.

This example demonstrates potential unreliability in the residual error indicator. Generally, a small residual suggests that the subspace method is accurately approximating one or more singular values (or eigenvalues), which may not necessarily be the smallest ones. The interpolation framework guarantees that the approximation is exact at the interpolated point for the smallest singular value. When the singular value function is analytic, the residual error indicator is capable of identifying singular values that are poorly approximated. However, issues arise when non-simple singular values occur or when analyticity is disrupted by a zero singular value, which can lead to the failure of the residual error indicator, as we illustrated here.
\end{remark}

\section{Numerical Results}\label{sec8}
In order to validate our results, we consider both 
randomly generated examples and examples arising from space discretizations of 
parametrized {\PDE}s. 
Recalling \Cref{rmk:adp:ell} and \eqref{eqn:spe:gap:def}, we use the condition
$\calG_\ell(\mu_{j+1})>10^{-7}$ when choosing $\ell$ dynamically at iteration $j$, and we follow the same strategy for the choice of $\sr$ on the projected problem.
All computations are performed using MATLAB 2023a on a MacBook Air Pro with an Apple M2 Pro processor and 16 GB of RAM. 

\vskip 2ex

\noindent\fbox{%
    \parbox{0.98\textwidth}{%
    \textbf{Code$\, \& \,$data availability.}
        The code and data used to generate the subsequent results are accessible via
		\begin{center}
			\url{https://zenodo.org/records/18154984}
		\end{center}
		under MIT Common License.
    }%
}\\[.2em]

\subsection{Hermitian randomly generated matrices}

For the experiments in this subsection, we always use \eqref{eqn:rel:H} as the surrogate error
in \Cref{alg:sf}, i.e., we aim to ensure that the relative  error \eqref{eqn:rel:err} is
less than the prescribed tolerance for all $\mu \in \calD$.

\subsubsection{Example 1}\label{sec:NE:exm1}

We consider the parameter-dependent matrix
\begin{equation}\label{eqn:ex1:aff}
	A(\mu) = \ee^{\mu} A_1 + \mu A_2, \quad\quad \mu \in \calD=[-1, 3],
\end{equation}
where $A_1$, $A_2 \in \R^{100 \times 100}$ are randomly generated full Hermitian matrices, and 
seek a subspace $\calV$ such that the relative error \eqref{eqn:rel:err} over the parameter 
domain ${\mathcal D} = [-1, 3]$ is below $10^{-8}$ uniformly.

\begin{figure}[t]	
	\centering
	\subfigure[Plots of $\lambda_{\min}(\mu)$ and its approximation $\lambda^{\calV}_{\min}(\mu)$ over $\calD$.]{
	    \scalebox{.9}{
%
\begin{tikzpicture}

\begin{axis}[%
	width=0.6*\imageWidth,
	height=\imageHeight,
	scale only axis,
	scaled ticks=false,
	grid=both,
	grid style={line width=.1pt, draw=gray!10},
	major grid style={line width=.2pt,draw=gray!50},
	axis lines*=left,
	axis line style={line width=\lineWidth},
xmin=-1,
xmax=3,
xlabel style={font=\color{white!15!black}},
xlabel={$\mu$},,
ymin=-80,
ymax=0,
yminorticks=true,
ylabel style={font=\color{white!15!black}},
	axis background/.style={fill=white},
	legend style={%
		legend cell align=left, 
		align=left, 
		font=\tiny,
		draw=white!15!black,
		at={(0.50,0.02)},
		anchor=south,},
]

\addplot [color=mycolor1, line width=\lineWidth]
  table[row sep=crcr]{%
-1	-57.3418664933627\\
-0.797979797979792	-46.3241993288746\\
-0.63636363636364	-37.5948986107554\\
-0.515151515151516	-31.1315432841586\\
-0.434343434343432	-26.890704694661\\
-0.353535353535349	-22.7458031957263\\
-0.313131313131308	-20.7332442193058\\
-0.272727272727266	-18.7859045060374\\
-0.232323232323239	-16.9389862354375\\
-0.191919191919197	-15.2449470662061\\
-0.151515151515156	-13.7468206856909\\
-0.111111111111114	-12.4218197428738\\
-0.0707070707070727	-11.2037934925863\\
-0.0303030303030312	-10.0456837639171\\
0.0101010101010104	-8.9258774686796\\
0.0505050505050519	-7.83578179978326\\
0.0909090909090935	-6.77268287159362\\
0.131313131313135	-5.73704854827089\\
0.171717171717177	-4.73204961064182\\
0.212121212121218	-3.7649463620634\\
0.25252525252526	-2.85254470481873\\
0.292929292929287	-2.04019844739332\\
0.333333333333329	-1.48549662337915\\
0.37373737373737	-1.51522049348372\\
0.414141414141412	-2.0065409905812\\
0.454545454545453	-2.7259329220644\\
0.494949494949495	-3.52975979076193\\
0.575757575757578	-5.16416476437013\\
0.616161616161619	-5.96033609309809\\
0.656565656565661	-6.73521154032944\\
0.696969696969703	-7.48579107380131\\
0.737373737373744	-8.20978647615514\\
0.777777777777771	-8.90528205181178\\
0.818181818181813	-9.57056378670467\\
0.858585858585855	-10.2040258463099\\
0.898989898989896	-10.8041162401627\\
0.939393939393938	-11.3693037642047\\
0.979797979797979	-11.898057192091\\
1.02020202020202	-12.3888319104918\\
1.06060606060606	-12.8400613467764\\
1.1010101010101	-13.2501517011504\\
1.14141414141415	-13.6174791622796\\
1.18181818181819	-13.9403891955329\\
1.22222222222223	-14.2171977705574\\
1.26262626262626	-14.4461946095646\\
1.3030303030303	-14.6256487334756\\
1.34343434343434	-14.7538167939541\\
1.38383838383838	-14.8289549399259\\
1.42424242424242	-14.8493353235153\\
1.46464646464646	-14.8132688770921\\
1.50505050505051	-14.7191368286061\\
1.54545454545455	-14.5654348506095\\
1.58585858585859	-14.3508364021098\\
1.62626262626263	-14.0742873332172\\
1.66666666666667	-13.7351567080801\\
1.70707070707071	-13.3335034896973\\
1.74747474747475	-12.8706270340247\\
1.78787878787878	-12.3504366987181\\
1.82828282828282	-11.7829197284655\\
1.90909090909091	-10.5772775792596\\
1.94949494949495	-9.97565301766781\\
1.98989898989899	-9.41833204617863\\
2.03030303030303	-8.95059945936939\\
2.07070707070707	-8.62297381252075\\
2.11111111111111	-8.50579581453489\\
2.15151515151516	-8.66314662000858\\
2.1919191919192	-9.11223227071832\\
2.23232323232324	-9.85950718767693\\
2.27272727272727	-10.9842349052136\\
2.31313131313131	-12.6229434724394\\
2.35353535353535	-14.5341978138861\\
2.39393939393939	-16.6941989763634\\
2.43434343434343	-19.0869937969295\\
2.47474747474747	-21.702737979771\\
2.51515151515152	-24.5362473206558\\
2.55555555555556	-27.5858321285114\\
2.5959595959596	-30.8524430764001\\
2.63636363636364	-34.3390530575946\\
2.67676767676768	-38.0502082696597\\
2.71717171717172	-41.9917047551942\\
2.75757575757576	-46.1703589243645\\
2.79797979797979	-50.5938473923831\\
2.83838383838383	-55.2705969509001\\
2.87878787878788	-60.2097103491896\\
2.91919191919192	-65.4209175636119\\
2.95959595959596	-70.9145452860412\\
3	-76.7014995702321\\
};
\addlegendentry{$\lambda_{{\min}}(\mu)$}

\addplot [color=mycolor2, dashed, line width=\lineWidth, 
 ]
  table[row sep=crcr]{%
-1	-57.3418664933627\\
-0.959595959595958	-55.1312988224902\\
-0.919191919191917	-52.9239269355301\\
-0.878787878787875	-50.7200460436168\\
-0.838383838383834	-48.5200010625541\\
-0.797979797979792	-46.324199323644\\
-0.757575757575751	-44.1331275473439\\
-0.717171717171723	-41.9473748591508\\
-0.676767676767682	-39.7676645327682\\
-0.63636363636364	-37.5948986062839\\
-0.595959595959599	-35.4302219383417\\
-0.555555555555557	-33.2751164089837\\
-0.515151515151516	-31.1315432839677\\
-0.474747474747474	-29.0021651804785\\
-0.434343434343432	-26.890704693933\\
-0.393939393939391	-24.8025476497469\\
-0.353535353535349	-22.7458031946772\\
-0.313131313131308	-20.7332442191508\\
-0.272727272727266	-18.7859045059252\\
-0.232323232323239	-16.9389862353512\\
-0.191919191919197	-15.244947066064\\
-0.151515151515156	-13.7468206856346\\
-0.111111111111114	-12.421819742698\\
-0.0707070707070727	-11.2037934919626\\
-0.0303030303030312	-10.0456837623469\\
0.0101010101010104	-8.92587746847687\\
0.0505050505050519	-7.83578179938485\\
0.0909090909090935	-6.77268287073549\\
0.131313131313135	-5.7370485482206\\
0.171717171717177	-4.73204961040788\\
0.212121212121218	-3.76494636205878\\
0.25252525252526	-2.85254470481679\\
0.292929292929287	-2.04019844737937\\
0.333333333333329	-1.48549662337756\\
0.37373737373737	-1.51522049345927\\
0.414141414141412	-2.00654099051543\\
0.454545454545453	-2.72593292205623\\
0.494949494949495	-3.52975979076112\\
0.535353535353536	-4.35118253909012\\
0.575757575757578	-5.16416476436603\\
0.616161616161619	-5.96033609308591\\
0.656565656565661	-6.73521154032926\\
0.696969696969703	-7.48579107378534\\
0.737373737373744	-8.2097864761033\\
0.777777777777771	-8.90528205174344\\
0.818181818181813	-9.57056378664942\\
0.858585858585855	-10.2040258462803\\
0.898989898989896	-10.8041162401528\\
0.939393939393938	-11.3693037642033\\
0.979797979797979	-11.8980571920909\\
1.02020202020202	-12.3888319104918\\
1.06060606060606	-12.840061346775\\
1.1010101010101	-13.2501517011409\\
1.14141414141415	-13.6174791622501\\
1.18181818181819	-13.9403891954722\\
1.22222222222223	-14.2171977704645\\
1.26262626262626	-14.4461946094535\\
1.3030303030303	-14.6256487333725\\
1.34343434343434	-14.7538167938842\\
1.38383838383838	-14.8289549398976\\
1.42424242424242	-14.8493353235131\\
1.46464646464646	-14.8132688770869\\
1.50505050505051	-14.719136828581\\
1.54545454545455	-14.5654348505768\\
1.58585858585859	-14.3508364020938\\
1.62626262626263	-14.0742873332168\\
1.66666666666667	-13.7351567080747\\
1.70707070707071	-13.333503489694\\
1.74747474747475	-12.8706270340228\\
1.78787878787878	-12.350436698718\\
1.82828282828282	-11.7829197284582\\
1.86868686868686	-11.1863953127714\\
1.90909090909091	-10.5772775788089\\
1.94949494949495	-9.97565301750262\\
1.98989898989899	-9.4183320458007\\
2.03030303030303	-8.95059945935591\\
2.07070707070707	-8.62297381251663\\
2.11111111111111	-8.50579581448048\\
2.15151515151516	-8.66314661990758\\
2.1919191919192	-9.11223227071611\\
2.23232323232324	-9.85950718767079\\
2.27272727272727	-10.984234905213\\
2.31313131313131	-12.6229434724307\\
2.35353535353535	-14.5341978138858\\
2.39393939393939	-16.694198976249\\
2.43434343434343	-19.0869937969231\\
2.47474747474747	-21.7027379796595\\
2.51515151515152	-24.5362473205115\\
2.55555555555556	-27.5858321285078\\
2.5959595959596	-30.8524430762705\\
2.63636363636364	-34.3390530571803\\
2.67676767676768	-38.050208269192\\
2.71717171717172	-41.9917047549602\\
2.75757575757576	-46.1703589243532\\
2.79797979797979	-50.5938473922488\\
2.83838383838383	-55.2705969501655\\
2.87878787878788	-60.2097103474883\\
2.91919191919192	-65.4209175608369\\
2.95959595959596	-70.9145452823688\\
3	-76.7014995660513\\
};
\addlegendentry{$\lambda^{\mathcal{V}}_{{\min}}(\mu)$}

\end{axis}
\end{tikzpicture}
     \label{fig1:a}
	}
    \subfigure[Approximation error over $\calD$.]{
	    \scalebox{.9}{
%
\begin{tikzpicture}

\begin{axis}[%
	width=0.6*\imageWidth,
	height=\imageHeight,
	scale only axis,
	scaled ticks=false,
	grid=both,
	grid style={line width=.1pt, draw=gray!10},
	major grid style={line width=.2pt,draw=gray!50},
	axis lines*=left,
	axis line style={line width=\lineWidth},
xmin=-1,
xmax=3,
xlabel style={font=\color{white!15!black}},
xlabel={$\mu$},
ymode=log,
ymin=1e-15,
ymax=1e-8,
yminorticks=true,
ylabel style={font=\color{white!15!black}},
	axis background/.style={fill=white},
	legend style={%
		legend cell align=left, 
		align=left, 
		font=\tiny,
		draw=white!15!black,
		at={(0.98,0.98)},
		anchor=north east,},
]

\addplot [color=mycolor1, line width=\lineWidth]
  table[row sep=crcr]{%
-1	2.47826859923489e-16\\
-0.95959595959596	6.45169977292552e-12\\
-0.919191919191919	2.4445448075322e-11\\
-0.878787878787879	5.10902457994062e-11\\
-0.838383838383839	8.23091691265148e-11\\
-0.797979797979798	1.12911908770281e-10\\
-0.757575757575758	1.36960970924921e-10\\
-0.717171717171718	1.48528309176687e-10\\
-0.676767676767676	1.4300597215311e-10\\
-0.636363636363637	1.18937936070018e-10\\
-0.595959595959597	8.01426219001949e-11\\
-0.555555555555555	3.71588527982979e-11\\
-0.515151515151516	6.13186470224288e-12\\
-0.474747474747474	2.34363598992817e-12\\
-0.434343434343434	2.70719200368734e-11\\
-0.393939393939394	5.48522754747635e-11\\
-0.353535353535353	4.61225758478916e-11\\
-0.313131313131313	7.47992238299847e-12\\
-0.272727272727273	5.97284821085756e-12\\
-0.232323232323232	5.09616312145878e-12\\
-0.191919191919192	9.32074973311083e-12\\
-0.151515151515152	4.09496490336996e-12\\
-0.111111111111111	1.41472816191178e-11\\
-0.0707070707070709	5.56754804867729e-11\\
-0.0303030303030312	1.56300981516114e-10\\
0.0101010101010104	2.27136390243351e-11\\
0.0505050505050502	5.08457766147981e-11\\
0.0909090909090917	1.26701771223614e-10\\
0.131313131313131	8.7654433324603e-12\\
0.171717171717171	4.94380977534994e-11\\
0.212121212121213	1.22530263868084e-12\\
0.252525252525253	6.77371383100453e-13\\
0.292929292929292	6.83852628345085e-12\\
0.333333333333334	1.07397786119428e-12\\
0.373737373737374	1.61437374709179e-11\\
0.414141414141413	3.27762790565475e-11\\
0.454545454545455	3.00134146620757e-12\\
0.494949494949495	2.27972920524089e-13\\
0.535353535353535	4.97244740010825e-13\\
0.575757575757576	7.95104149775853e-13\\
0.616161616161616	2.04344024886052e-12\\
0.656565656565657	2.83522438890156e-14\\
0.696969696969697	2.13306402382594e-12\\
0.737373737373737	6.31369557838998e-12\\
0.777777777777779	7.67329934088622e-12\\
0.818181818181818	5.77291244356344e-12\\
0.858585858585858	2.90528657407877e-12\\
0.8989898989899	9.1168018332382e-13\\
0.939393939393939	1.28586737512405e-13\\
0.979797979797979	2.09017281982261e-15\\
1.02020202020202	1.14706978170937e-15\\
1.06060606060606	1.10260875144307e-13\\
1.1010101010101	7.16031189168937e-13\\
1.14141414141414	2.16933059983072e-12\\
1.18181818181818	4.34953167417893e-12\\
1.22222222222222	6.53345877216728e-12\\
1.26262626262626	7.68854823709047e-12\\
1.3030303030303	7.05021457121575e-12\\
1.34343434343434	4.73833479100646e-12\\
1.38383838383838	1.90238105657837e-12\\
1.42424242424242	1.50369056828455e-13\\
1.46464646464647	3.54473469754488e-13\\
1.50505050505051	1.70767141920352e-12\\
1.54545454545454	2.24535045508119e-12\\
1.58585858585859	1.10870389465151e-12\\
1.62626262626263	2.32231764714825e-14\\
1.66666666666667	3.91091503105197e-13\\
1.70707070707071	2.5352729445727e-13\\
1.74747474747475	1.53336153969558e-13\\
1.78787878787879	7.91062118290565e-15\\
1.82828282828283	6.2126932066661e-13\\
1.86868686868687	2.87691038619673e-12\\
1.90909090909091	4.26077656841136e-11\\
1.94949494949495	1.65595479303651e-11\\
1.98989898989899	4.01271237715542e-11\\
2.03030303030303	1.50573371110269e-12\\
2.07070707070707	4.77926520132365e-13\\
2.11111111111111	6.39762775936221e-12\\
2.15151515151515	1.16575673737702e-11\\
2.19191919191919	2.41923029637881e-13\\
2.23232323232323	6.22476637351865e-13\\
2.27272727272727	5.62781281942459e-14\\
2.31313131313131	6.85609544276242e-13\\
2.35353535353535	1.63773618281306e-14\\
2.39393939393939	6.85401255944758e-12\\
2.43434343434343	3.35783389712698e-13\\
2.47474747474748	5.13506984670517e-12\\
2.51515151515152	5.88053905523303e-12\\
2.55555555555556	1.30075496685138e-13\\
2.5959595959596	4.20073686489754e-12\\
2.63636363636364	1.20644544680681e-11\\
2.67676767676768	1.22929756295078e-11\\
2.71717171717172	5.5727707641493e-12\\
2.75757575757576	2.44386634774867e-13\\
2.7979797979798	2.65460712786098e-12\\
2.83838383838384	1.32915180048579e-11\\
2.87878787878788	2.82565123010528e-11\\
2.91919191919192	4.24182198726454e-11\\
2.95959595959596	5.17858368953759e-11\\
3	5.45071039998809e-11\\
};
\addlegendentry{${\calE}^{\calV}_r(\mu)$ (see \eqref{eqn:rel:err})}
\end{axis}
\end{tikzpicture}
     \label{fig1:b}
	}
    \subfigure[Decay of $\max_{\mu\in\calD} H_r^{(j)}(\mu)$ with respect to the iteration
     counter $j$ of \Cref{alg:sf}.]{
	    \scalebox{.9}{
%
\begin{tikzpicture}

\begin{axis}[%
	width=0.6*\imageWidth,
	height=\imageHeight,
	scale only axis,
	scaled ticks=false,
	grid=both,
	grid style={line width=.1pt, draw=gray!10},
	major grid style={line width=.2pt,draw=gray!50},
	axis lines*=left,
	axis line style={line width=\lineWidth},
xmin=0,
xmax=35,
xlabel style={font=\color{white!15!black}},
xlabel={$j$},
ymode=log,
ymin=1e-10,
ymax=10000,
yminorticks=true,
ylabel style={font=\color{white!15!black}},
	axis background/.style={fill=white},
	legend style={%
		legend cell align=left, 
		align=left, 
		font=\tiny,
		draw=white!15!black,
		at={(1.10,0.98)},
		anchor=north east,},
]

\addplot [color=mycolor1, line width=\lineWidth, mark=o, mark options={solid, mycolor1}]
  table[row sep=crcr]{%
1	192.030823024698\\
2	3.61671749100696\\
3	1.6744778124624\\
4	0.901352838934562\\
5	0.306934160644513\\
6	0.150528770669131\\
7	0.143347918277699\\
8	0.13134634816611\\
9	0.100594912142646\\
10	0.0736080655940456\\
11	0.024966359932911\\
12	0.017039400378344\\
13	0.0157820870981209\\
14	0.00446788450254395\\
15	0.00433404418104943\\
16	0.00287370403756224\\
17	0.000637147456778437\\
18	0.000212844708893374\\
19	0.000129680945030881\\
20	4.51497231447082e-05\\
21	2.79023670608338e-05\\
22	1.82688936704672e-05\\
23	7.90247106296403e-06\\
24	5.33789183735644e-06\\
25	1.42067060842657e-06\\
26	4.77391870984333e-07\\
27	8.10531941890701e-08\\
28	6.24958386574212e-08\\
29	5.21217368270505e-08\\
30	4.10477567929009e-08\\
31	1.15498702076847e-08\\
32	4.25372562106039e-09\\
};
\addlegendentry{$H^{(j)}_r(\mu_{j+1})$ (see \eqref{eqn:rel:H})}
\end{axis}
\end{tikzpicture}
     \label{fig1:c}
	}
  \caption{\emph{(Concerns Example 1)} $\: A(\mu)\in\R^{\sn\times \sn}$ full matrix as in \eqref{eqn:ex1:aff}, $\sn=10^2$ and 
  				$\: A^{V}(\mu)\in\R^{\sd \times \sd}$ with $\sd=32$.}
	\label{fig1}
\end{figure}

When we run \Cref{alg:sf}, the termination condition is satisfied for a subspace $\calV$
of dimension $\sd=32$. \Cref{fig1:a} shows that $\lambda_{\min}(\mu)$ is qualitatively 
well approximated by $\lambda_{\min}^{\calV}(\mu)$ over the whole parameter domain, while 
\Cref{fig1:b} illustrates that the computed error \eqref{eqn:rel:err} over the parameter 
domain is uniformly smaller than the prescribed tolerance $10^{-8}$. 
Finally, it can be observed in \Cref{fig1:c} that the maximum value attained by $H_r^{(j)}(\mu)$ over $\mu \in \calD$ is monotonically decreasing as a function of the iteration counter $j$ of \Cref{alg:sf}. 

We provide \Cref{figINT} to validate \Cref{rmk:int}. Here, we consider four different values
$\mu_1, \mu_2, \mu_3, \mu_4$ of the parameter (the red crosses in \Cref{figINT:a}). By computing the eigenpairs
$(\lambda_{\min}(\mu_i), v_1(\mu_i))$ for $i = 1,\dots, 4$, we construct
the subspace $\calV$ spanned by $v_1(\mu_1), v_1(\mu_2), v_1(\mu_3), v_1(\mu_4)$
leading to the upper bound $\lambda_{\min}^{\calV}(\mu)$, as well as
the lower bounds $\lambda_{\SCM}(\mu)$ and $\lambda_{\LB}(\mu)$. 

The first observation is that, as
expected, $\lambda_{\min}^{\calV}(\mu)$ and $\lambda_{\SCM}(\mu)$, $\lambda_{\LB}(\mu)$
are indeed effective upper and lower bounds for $\lambda_{\min}(\mu)$, all of which also
interpolate $\lambda_{\min}(\mu)$ at the points $\mu = \mu_i$ for $i = 1,\dots, 4$. 
The second observation is that, in contrast to the claim in \cite[eq. (3.9)]{SirK16}, $\lambda_{\LB}(\mu)$ is not greater than or equal to 
$\lambda_{\SCM}(\mu)$ for all $\mu\in \calD$. 

It is apparent from \Cref{figINT:a} that the claimed property $\lambda_{\LB}(\mu) \geq \lambda_{\SCM}(\mu)$ does not hold especially for $\mu$ values not close to the interpolation points.
On the other hand, the property $\lambda_{\LB}(\mu) \geq \lambda_{\SCM}(\mu)$ holds for $\mu$
near the interpolation points, as can be observed in \Cref{figINT:b}, 
where we zoom in on the graphs of the functions depicted in \Cref{figINT:a} near the leftmost
interpolation point. 

Here, another property that we can qualitatively 
observe is the Hermitian interpolation property; $\lambda_{\min}(\mu)$ is interpolated 
tangentially by $\lambda_{\min}^{\calV}(\mu)$ and $\lambda_{\LB}(\mu)$ at $\mu = \mu_i$, 
meaning that the derivatives of $\lambda_{\min}^{\calV}(\mu)$ and $\lambda_{\LB}(\mu)$ interpolate 
$\lambda'_{\min}(\mu)$ at $\mu = \mu_i$. This tangential interpolation property does not 
seem to hold for $\lambda_{\SCM}(\mu)$; indeed, 
it is evident from \Cref{figINT:b} that the left-hand and right-hand 
derivatives of $\lambda_{\SCM}(\mu)$ at $\mu = \mu_i$ are different, indicating that
$\lambda_{\SCM}(\mu)$ is not differentiable at $\mu = \mu_i$.

\begin{figure}[t]
	\centering
	\subfigure[Plots of the lower and upper bounds introduced in \Cref{sec2}
   over $\mu \in \calD$.]{
		\input{Example_1_Plot_6.tex}
      \label{figINT:a}
   }
   \hskip 2ex
   \subfigure[Zoomed versions of the graphs of the functions in  \Cref{figINT:a} 
        near the leftmost interpolation point.]{
		\input{Example_1_Plot_7.tex}
        \label{figINT:b}
   }
	\caption{\emph{(Concerns Example 1)} $\: A(\mu)\in\R^{\sn\times \sn}$ in \eqref{eqn:ex1:aff}, $\sn=10^2$.}
	\label{figINT}
\end{figure}

%

\subsubsection{Example 1 (continued)}\label{sec:NE:exm1bis}

We now consider the same matrices $A_1, A_2$ as in \eqref{eqn:ex1:aff}, but with a parameter-dependent matrix $A(\mu)$ that depends on two parameters, defined via the affine decomposition
\begin{equation}\label{eqn:ex1:bis:aff}
	A(\mu) = \ee^{\mu_1} A_1 + \mu_2 A_2, \quad\quad 
	\mu\, =\, ( \mu_1,\mu_2 ) \in \calD = [-10, 10] \times [-10, 10].
\end{equation}
The goal of this experiment is to illustrate both the advantages and limitations of solving the optimization problem \eqref{def:H} over a continuum domain through \EIGOPT, in contrast to discrete-domain optimization, as in \cite{SirK16}. Recall that the optimization routine \EIGOPT employs two stopping criteria: (i) a guarantee that the computed global maximum differs from the true one by no more than a prescribed tolerance, and (ii) exceeding a
prescribed upper bound on the number of objective function evaluations, which we indicate as $ \max\# H_r^{j}(\mu) $.

We examine the efficiency of employing \EIGOPT within \Cref{alg:sf} against the \SSCM approach from \cite{SirK16}, under the condition that $\max \# H_r^{j}(\mu) $ aligns with the size of the discrete domain, denoted as $|\Xi|$, and, the discrete domain $\Xi$ is generated by the Cartesian products of Chebyshev distributed points in each parameter dimension, utilized for the \SSCM. 
We report with respect to the parameter $\max \# H_r^{j}(\mu)$
i) the computational time (\CT) required to execute the algorithms, ii) the greatest relative error, 
and iii) the average relative error measured over a randomly generated set $\Xi_{test}$ 
consisting of $10^4$ points.
\begin{table}[t]
\centering
\scalebox{0.9}{\begin{tabular}{lcrrrrrrr}
\toprule
\multicolumn{2}{l}{$\max \# H_r^{j}(\mu)~\big|~|\Xi|$}  & 100 & 225 & 400 & 625 & 900&1225&1600 \\
\midrule

\multirow{2}{*}{\CT [s]} 
& \SSCM    & 5  & 3  & 4  & 8 & 4 & 5& 6  \\
& \EIGOPT & 14 & 47 & 95 & 180 &305& 500 & 803 \\
\midrule
\multirow{2}{*}{$\max~{\calE}^{\calV}_r(\mu)$} 
& \SSCM    & 2.0$\cdot10^{-1}$ & 3.6$\cdot10^{-2}$  & 8.1$\cdot10^{-3}$  & 1.8$\cdot10^{-3}$&2.0$\cdot10^{-3}$& 1.4$\cdot10^{-4}$&2.0$\cdot10^{-3}$  \\
& \EIGOPT  & 9.1$\cdot10^{-3}$ & 3.6$\cdot10^{-5}$ & 2.7$\cdot10^{-6}$ & 1.2$\cdot10^{-6}$&3.9$\cdot10^{-7}$&1.8$\cdot10^{-8}$&1.5$\cdot10^{-9}$ \\
\midrule
\multirow{2}{*}{$\text{mean}~{\calE}^{\calV}_r(\mu)$} 
& \SSCM   & 3.6$\cdot10^{-4}$ & 1.2$\cdot10^{-4}$  & 9.5$\cdot10^{-6}$  & 2.4$\cdot10^{-6}$&1.6$\cdot10^{-6}$& 7.8$\cdot10^{-8}$&1.5$\cdot10^{-6}$  \\
& \EIGOPT  & 9.8$\cdot10^{-6}$ & 9.6$\cdot10^{-9}$ & 2.8$\cdot10^{-9}$ & 8.8$\cdot10^{-10}$&1.3$\cdot10^{-10}$&5.6$\cdot10^{-10}$&5.3$\cdot10^{-11}$ \\
\bottomrule
\end{tabular}}
\caption{Example 1: $\: A(\mu)\in\R^{\sn\times \sn}$ as in \eqref{eqn:ex1:bis:aff}, $\sn=10^2$. The construction of subspace $\calV$ is achieved by employing either \Cref{alg:sf} with \EIGOPT or the conventional \SSCM, targeting a specified tolerance of $10^{-8}$. The outcomes presented include: the computational time (\CT) in seconds required to form $\calV$, the highest relative error, and the average relative error calculated over $\Xi_{test}$ such that $|\Xi_{test}|=10^4$, where $\Xi_{test}$ consists of
points in $\mathcal{D}$ generated randomly.}
    \label{Tab:1}
\end{table}

The findings reported in \Cref{Tab:1} demonstrate that the continuum optimization via \EIGOPT yields subspaces with errors several orders of magnitude smaller throughout the test domain $\Xi_{test}$. It is important to highlight that this improved performance is not a result of constructing larger subspaces compared to those formed by \SSCM; in fact, in most cases, they share the same dimension or vary by only a few units. However, this superior quality of approximation spaces incurs a significantly higher cost during the offline construction phase of $\calV$. In summary, even when the number of permissible objective function evaluations is limited, continuum optimization offers the advantage, over uniformly distributed grids, of exploring the parametric domain at points not covered, and possibly ``far'' due to the coarseness, of the discrete grid, thereby enhancing the quality of the approximation space. In contrast, the discrete grid demonstrates significantly higher computational efficiency, notably because it allows for straightforward parallel execution of objective function evaluations and eliminates the need for 
gradient evaluations.

An effective strategy to leverage the advantages of continuum optimization while minimizing its limitations involves employing the following hybrid method. Initially, execute \Cref{alg:sf} using \EIGOPT with a limited number of objective function evaluations (approximately $100$). Subsequently, employ the generated subspace $\calV$ as the starting point for the \SSCM, where the discrete grid contains a significantly larger number of points. Our hypothesis is that due to the effectiveness of continuum optimization to 
construct accurate subspaces with minimal objective function evaluations, the initial subspace 
constructed for the discrete approach
would necessitate only minor adjustments to attain the desired accuracy on substantially larger discrete grids. This hypothesis is substantiated by the
data in \Cref{Tab:2}, which demonstrates that the hybrid continuum-discrete methodology, referred to as \EIGOPT-\SSCM with a function evaluation cap of $100$ for \EIGOPT, is considerably faster compared to the conventional \SSCM.

\begin{table}[t]
\centering

\begin{tabular}{lcrrrrrr}
\toprule
$|\Xi|$ & & $50^2$ & $100^2$ & $150^2$ & $200^2$ & $250^2$ & $300^2$ \\
\midrule

\multirow{2}{*}{\CT [s]} 
& \SSCM  & 9  & 30  & 67  & 123  & 194 & 271  \\
& \EIGOPT-\SSCM & 15& 18 & 22 & 28 & 33 &41 \\
\bottomrule
\end{tabular}

\caption{Example 1: $\: A(\mu)\in\R^{\sn\times \sn}$ as in \eqref{eqn:ex1:bis:aff}, $\sn=10^2$. Comparison between the standard \SSCM and the hybrid continuum-discrete optimization (\EIGOPT-\SSCM) in terms of computation time (\CT) to construct a subspace $\calV$ satisfying the prescribed error tolerance $10^{-8}$. The maximum number of objective function evaluations set for \EIGOPT is $100$.}
 \label{Tab:2}
\end{table}

In summary, from this comparison, we can deduce the following guidelines for the application of continuum optimization in handling parametric eigenvalue problems through \EIGOPT:
\begin{itemize}
    \item \Cref{alg:sf} is preferred over the conventional \SSCM when the offline phase is constrained by a limited computational budget, yet several parameter evaluations are required during the online phase. This preference arises because employing a coarse grid with \SSCM for constructing $\calV$ might not yield precise approximations across the entire domain $\calD$.
    \item If there is some computational budget available for the offline phase, continuum optimization may still be advantageous for initializing the projection space $\calV$. This initialization can subsequently be refined using \SSCM with a finer grid $\Xi$.
\end{itemize}

\subsubsection{Example 2}\label{sec:NE:exm2}

We now turn to a larger-scale problem involving a parameter-dependent matrix
\begin{equation}\label{eqn:ex2:aff}
	A(\mu) = \mu^2 A_1 + \mu A_2, \quad\quad \mu \in \calD := [-2, 4],
\end{equation}
where $A_1, A_2 \in \R^{\sn \times \sn}$ are dense Hermitian matrices with $\sn = 2000$. The objective is to construct a subspace $\calV$ such that the smallest eigenvalue $\lambda_{\min}^{\calV}(\mu)$ of the projected problem approximates $\lambda_{\min}(\mu)$ with a relative error not exceeding $\varepsilon = 10^{-8}$ uniformly over $\calD$.

\begin{figure}[t]
	\centering
	\subfigure[Plots of $\lambda_{\min}(\mu)$ and $\lambda^{\calV}_{\min}(\mu)$ over $\calD$. The interpolation points are also shown.]{
%
\begin{tikzpicture}

\begin{axis}[%
	width=\imageWidth,
	height=\imageHeight,
	scale only axis,
	scaled ticks=false,
	grid=both,
	grid style={line width=.1pt, draw=gray!10},
	major grid style={line width=.2pt,draw=gray!50},
	axis lines*=left,
	axis line style={line width=\lineWidth},
xmin=-2,
xmax=4,
xlabel style={font=\color{white!15!black}},
xlabel={$\mu$},,
ymin=-500,
ymax=0,
yminorticks=true,
ylabel style={font=\color{white!15!black}},
	axis background/.style={fill=white},
	legend style={%
		legend cell align=left, 
		align=left, 
		font=\tiny,
		draw=white!15!black,
		at={(0.50,0.02)},
		anchor=south,},
]

\addplot [color=mycolor1, line width=\lineWidth]
  table[row sep=crcr]{%
-2	-160.722655844132\\
-1.88345365382099	-147.912566874219\\
-1.84810126582278	-144.133613554308\\
-1.77215189873419	-136.180022899651\\
-1.69590803372074	-128.418447820348\\
-1.62025316455697	-120.933327608454\\
-1.53192636535221	-112.461923682305\\
-1.46835443037975	-106.539867436523\\
-1.39110132409616	-99.5375284723912\\
-1.31645569620252	-92.9719145651593\\
-1.24050632911394	-86.4931099984605\\
-1.13349422466416	-77.7040831650081\\
-1.08860759493672	-74.1287082539742\\
-1.01265822784808	-68.2147049233946\\
-0.936708860759495	-62.4582202211054\\
-0.860759493670912	-56.8470381539128\\
-0.784810126582272	-51.3703785878583\\
-0.708860759493689	-46.0178214652223\\
-0.632911392405049	-40.7788307257079\\
-0.556962025316466	-35.6425541474727\\
-0.405063291139243	-25.632831101699\\
-0.25316455696202	-15.8947577033668\\
-0.0253164556962133	-1.58162900389635\\
-1.25550229768123e-07	-7.84393085950796e-06\\
0.202531645569593	-12.7952228368109\\
0.354430379746816	-22.5569543794043\\
0.430379746835456	-27.5289298340385\\
0.506329113924039	-32.5776266313839\\
0.582278481012679	-37.7146080181222\\
0.673024751063565	-43.9842821454837\\
0.734177215189845	-48.2996334841802\\
0.810126582278485	-53.7707891458123\\
0.914801546196657	-61.533045701548\\
0.962025316455708	-65.1254483897256\\
1.03797468354429	-71.0285311302617\\
1.1649083925966	-81.2597985662434\\
1.18987341772151	-83.3283840240001\\
1.26582278481015	-89.7401072135095\\
1.34177215189874	-96.3357121670867\\
1.46907582034777	-107.821817689331\\
1.49367088607596	-110.105235345942\\
1.56962025316454	-117.292015265333\\
1.64556962025318	-124.688364937822\\
1.72151898734177	-132.300124640828\\
1.8095076348863	-141.395052292175\\
1.87341772151899	-148.191547921128\\
1.94936708860757	-156.481076627823\\
2.02531645569621	-165.005709483358\\
2.1012658227848	-173.769370021187\\
2.17881511358718	-182.967984750207\\
2.25316455696202	-192.027754679103\\
2.32911392405066	-201.528770926767\\
2.40506329113924	-211.281506374626\\
2.48101265822783	-221.288640042203\\
2.59416060641547	-236.674394151674\\
2.63291139240505	-242.076255795303\\
2.70886075949369	-252.861580591099\\
2.80537447038813	-266.948482552522\\
2.86075949367091	-275.226630609675\\
2.9367088607595	-286.810520096969\\
3.01594949740786	-299.184305459913\\
3.08860759493672	-310.790144804512\\
3.1645569620253	-323.188933129291\\
3.24050632911394	-335.862084509574\\
3.31645569620252	-348.810675967933\\
3.39240506329116	-362.03565709319\\
3.50822997100346	-382.738183050444\\
3.62025316455697	-403.377125894792\\
3.69620253164555	-417.715554178838\\
3.78948605216817	-435.709798531919\\
3.92405063291142	-462.413653071627\\
4	-477.875848056489\\
4	-477.875848056489\\
};
\addlegendentry{$\lambda_{{\min}}(\mu)$}

\addplot [color=mycolor2, dashed, line width=\lineWidth]
  table[row sep=crcr]{%
-2	-160.722655844132\\
-1.88345365382099	-147.912566874219\\
-1.84810126582278	-144.133613554205\\
-1.77215189873419	-136.180022899457\\
-1.69590803372074	-128.418447820348\\
-1.62025316455697	-120.933327608374\\
-1.53192636535221	-112.461923682305\\
-1.46835443037975	-106.539867436465\\
-1.39110132409616	-99.5375284723912\\
-1.31645569620252	-92.9719145650215\\
-1.24050632911394	-86.4931099984528\\
-1.13349422466416	-77.7040831650081\\
-1.08860759493672	-74.1287082539098\\
-1.01265822784808	-68.2147049233939\\
-0.936708860759495	-62.4582202209966\\
-0.860759493670912	-56.847038153895\\
-0.784810126582272	-51.3703785872792\\
-0.708860759493689	-46.0178214647664\\
-0.632911392405049	-40.7788307256979\\
-0.556962025316466	-35.6425541469511\\
-0.405063291139243	-25.6328311016989\\
-0.25316455696202	-15.8947577033641\\
-0.0253164556962133	-1.58162900383184\\
-1.25550229768123e-07	-7.84393085950796e-06\\
0.202531645569593	-12.7952228367573\\
0.354430379746816	-22.5569543793513\\
0.430379746835456	-27.528929834018\\
0.506329113924039	-32.5776266304124\\
0.582278481012679	-37.7146080167081\\
0.673024751063565	-43.9842821454836\\
0.734177215189845	-48.2996334831711\\
0.810126582278485	-53.7707891439464\\
0.914801546196657	-61.533045701548\\
0.962025316455708	-65.1254483891761\\
1.03797468354429	-71.0285311287051\\
1.1649083925966	-81.2597985662433\\
1.18987341772151	-83.3283840238697\\
1.26582278481015	-89.7401072121004\\
1.34177215189874	-96.3357121654544\\
1.46907582034777	-107.821817689331\\
1.49367088607596	-110.105235345844\\
1.56962025316454	-117.292015264181\\
1.64556962025318	-124.688364936175\\
1.72151898734177	-132.300124640035\\
1.8095076348863	-141.395052292175\\
1.87341772151899	-148.191547920734\\
1.94936708860757	-156.481076626698\\
2.02531645569621	-165.005709482299\\
2.1012658227848	-173.76937002082\\
2.17881511358718	-182.967984750207\\
2.25316455696202	-192.027754678853\\
2.32911392405066	-201.528770926197\\
2.40506329113924	-211.281506374124\\
2.48101265822783	-221.288640042004\\
2.59416060641547	-236.674394151674\\
2.63291139240505	-242.076255795294\\
2.70886075949369	-252.861580591079\\
2.80537447038813	-266.948482552522\\
2.86075949367091	-275.226630609665\\
2.9367088607595	-286.810520096948\\
3.01594949740786	-299.184305459913\\
3.08860759493672	-310.790144804428\\
3.1645569620253	-323.188933128876\\
3.24050632911394	-335.86208450877\\
3.31645569620252	-348.810675967084\\
3.39240506329116	-362.035657092728\\
3.50822997100346	-382.738183050444\\
3.62025316455697	-403.377125894546\\
3.69620253164555	-417.715554178618\\
3.78948605216817	-435.709798531919\\
3.92405063291142	-462.413653071199\\
4	-477.875848056489\\
4	-477.875848056489\\
};
\addlegendentry{$\lambda^{\mathcal{V}}_{{\min}}(\mu)$}
\addplot [color=mycolor3, line width=\lineWidth,only marks, mark=asterisk, mark options={solid, mycolor3}]
  table[row sep=crcr]{
2.17881511358718	-182.967984750207\\
-2	-160.722655844132\\
1.64255732215679e-06	-0.000103412998157637\\
0.914801546196657	-61.533045701548\\
-0.875250663380257	-57.9069480178552\\
4	-477.875848056489\\
-1.39110132409616	-99.5375284723912\\
0.414944200171931	-26.5126184660255\\
-0.406798244108131	-25.7454432791135\\
3.01594949740786	-299.184305459913\\
1.46907582034777	-107.821817689331\\
-1.13349422466416	-77.7040831650081\\
0.157275950403118	-9.92194902653966\\
-0.173115117097836	-10.8392076552327\\
-1.69590803372074	-128.418447820348\\
3.50822997100346	-382.738183050444\\
0.673024751063565	-43.9842821454837\\
2.59416060641547	-236.674394151674\\
-0.641662797019933	-41.3770581913273\\
-1.25031928857629	-87.318774153475\\
1.8095076348863	-141.395052292175\\
0.0620755282836285	-3.90916265345527\\
-0.0830391199237965	-5.18986382643874\\
-1.88345365382099	-147.912566874219\\
1.1649083925966	-81.2597985662434\\
-1.01670220350326	-68.5255104080773\\
3.78948605216817	-435.709798531919\\
0.298331049247622	-18.9267970181355\\
-1.53192636535221	-112.461923682305\\
-0.268101248751634	-16.8430114755392\\
2.80537447038813	-266.948482552522\\
-0.0303690591802592	-1.89730381004085\\
};
  \addlegendentry{$(\mu_j , \lambda_{\min}(\mu_j))$}

\end{axis}
\end{tikzpicture}%
     \label{fig2a}
	}
 	\hfil
 	\subfigure[Relative approximation error over $\calD$.]{
%
\begin{tikzpicture}

\begin{axis}[%
	width=\imageWidth,
	height=\imageHeight,
	scale only axis,
	scaled ticks=false,
	grid=both,
	grid style={line width=.1pt, draw=gray!10},
	major grid style={line width=.2pt,draw=gray!50},
	axis lines*=left,
	axis line style={line width=\lineWidth},
xmin=-2,
xmax=4,
xlabel style={font=\color{white!15!black}},
xlabel={$\mu$},
ymode=log,
ymin=1e-16,
ymax=5e-09,
yminorticks=true,
ylabel style={font=\color{white!15!black}},
	axis background/.style={fill=white},
	legend style={%
		legend cell align=center, 
		align=left, 
		font=\tiny,
		draw=white!15!black,
        minimum height=.7cm,
		at={(0.50,0.80)},
		anchor=south,
        },
]

\addplot [color=mycolor1, line width=\lineWidth]
  table[row sep=crcr]{%
-2	1.23785887538948e-15\\
-1.92405063291139	1.2063729349952e-12\\
-1.84810126582279	7.17180084848874e-13\\
-1.77215189873418	1.42150264634192e-12\\
-1.69620253164557	4.42540306879911e-16\\
-1.62025316455696	6.61462830584784e-13\\
-1.54430379746835	3.47668022187051e-14\\
-1.46835443037975	5.51148157925315e-13\\
-1.39240506329114	4.27806075734482e-16\\
-1.31645569620253	1.48265518014104e-12\\
-1.24050632911392	8.88865298179451e-14\\
-1.16455696202532	6.82413110146053e-13\\
-1.08860759493671	8.68424303838717e-13\\
-1.0126582278481	1.08329200576382e-14\\
-0.936708860759493	1.74125472596929e-12\\
-0.860759493670885	3.11230183576667e-13\\
-0.784810126582279	1.12734374805118e-11\\
-0.708860759493671	9.90483555342513e-12\\
-0.632911392405063	2.4550844074474e-13\\
-0.556962025316455	1.46342575434861e-11\\
-0.481012658227849	1.06426671767381e-11\\
-0.405063291139241	4.9896046179753e-15\\
-0.329113924050633	2.1128634887221e-12\\
-0.253164556962025	1.72776944761577e-13\\
-0.177215189873419	2.28903634122696e-14\\
-0.101265822784811	1.69985982985687e-12\\
-0.0253164556962027	4.08000883437094e-11\\
0.0506329113924053	8.02166278849063e-13\\
0.126582278481013	2.25963679772878e-12\\
0.202531645569621	4.19432264030514e-12\\
0.278481012658228	7.6187655932692e-13\\
0.354430379746836	2.34753311515295e-12\\
0.430379746835444	7.44124351926317e-13\\
0.506329113924052	2.98185780141383e-11\\
0.582278481012658	3.74951543088004e-11\\
0.658227848101266	1.59755196386506e-12\\
0.734177215189874	2.08927629580895e-11\\
0.810126582278482	3.47007223403581e-11\\
0.886075949367088	4.07638462555059e-12\\
0.962025316455696	8.43873474953835e-12\\
1.0379746835443	2.19147395509897e-11\\
1.11392405063291	6.52831863823228e-12\\
1.18987341772152	1.56504911560569e-12\\
1.26582278481013	1.57011229386742e-11\\
1.34177215189873	1.69432967800149e-11\\
1.41772151898734	4.01788203030116e-12\\
1.49367088607595	8.84490070407491e-13\\
1.56962025316456	9.82554253367562e-12\\
1.64556962025316	1.32104898619606e-11\\
1.72151898734177	5.99948852219529e-12\\
1.79746835443038	1.20880607406043e-13\\
1.87341772151899	2.65744714454514e-12\\
1.94936708860759	7.18711211863688e-12\\
2.0253164556962	6.41619420082019e-12\\
2.10126582278481	2.11368523074639e-12\\
2.17721518987342	2.02150705319825e-15\\
2.25316455696202	1.30291742740144e-12\\
2.32911392405063	2.82907243741738e-12\\
2.40506329113924	2.37657497394881e-12\\
2.48101265822785	9.02400278749577e-13\\
2.55696202531646	7.00868254788596e-14\\
2.63291139240506	3.65139141954106e-14\\
2.70886075949367	7.90173884180763e-14\\
2.78481012658228	6.03088090236899e-15\\
2.86075949367089	3.63498317635216e-14\\
2.93670886075949	7.33308700510347e-14\\
3.0126582278481	1.90325339120655e-16\\
3.08860759493671	2.68862533520712e-13\\
3.16455696202532	1.28183484702544e-12\\
3.24050632911393	2.39348133239064e-12\\
3.31645569620253	2.43402327566702e-12\\
3.39240506329114	1.27806590445445e-12\\
3.46835443037975	1.64382758751435e-13\\
3.54430379746836	1.10381771383747e-13\\
3.62025316455696	6.07782766568569e-13\\
3.69620253164557	5.2813285593972e-13\\
3.77215189873418	2.97145517837797e-14\\
3.84810126582279	3.51940364692867e-13\\
3.92405063291139	9.2712025737312e-13\\
4	1.18950181500047e-16\\
};
\addlegendentry{${\calE}^{\calV}_r(\mu)$ (see \eqref{eqn:rel:err})}

\end{axis}
\end{tikzpicture}%
        \label{fig2b}
	}
 
	\caption{\emph{(Concerns Example 2)}\: Dense matrix $A(\mu)\in\R^{\sn\times \sn}$ as in \eqref{eqn:ex2:aff}, $\sn = 2000$, with projected matrix $A^V(\mu)\in\R^{\sd \times \sd}$, $\sd = 32$.}
	\label{fig2}
\end{figure}

\Cref{alg:sf} terminates with a reduced space of dimension $\sd = 32$. As shown in \Cref{fig2a}, the projected
eigenvalue $\lambda^{\calV}_{\min}(\mu)$ accurately captures the behavior of the full eigenvalue 
$\lambda_{\min}(\mu)$ across the domain. Notably, the algorithm selects more interpolation points 
near $\mu = 0$, where $\lambda_{\min}(\mu)$ is less smooth and not simple. The relative error is 
shown in \Cref{fig2b}, confirming that the desired tolerance is satisfied uniformly.

\begin{figure}[t]
	\centering
	    \begin{tikzpicture}
\begin{axis}[
        width=\imageWidth,
	height=0.9*\imageHeight,
 scale only axis,
	scaled ticks=false,
	grid=both,
	grid style={line width=.1pt, draw=gray!10},
	major grid style={line 
        width=.2pt,draw=gray!50},
	axis lines*=left,
	axis line style={line width=\lineWidth},
    enlargelimits=0.10,
    x=1.5cm,
    legend style={%
		legend cell align=left,
		font=\small,
		draw=white!15!black,
        minimum height=.7cm,
    at={(1.3,1)},
      anchor=north},
      ymin=0,
      ymax=135,
    ylabel={\CT (s)},
    axis background/.style={fill=white},
    symbolic x coords={A,B,C,D,E},
    xtick={A,B,C,D,E},
    bar width=25pt,
    ]
\addplot[ybar,fill=mycolor1,forget plot] coordinates {(A,127)};
\addplot[ybar,fill=mycolor2,forget plot] coordinates {(B,21)};
\addplot[ybar,fill=mycolor3,forget plot] coordinates {(C,106)};
\addplot[ybar,fill=mycolor4,forget plot] coordinates {(D,63)};
\addplot[ybar,fill=mycolor5,forget plot] coordinates {(E,58)};

 \addlegendimage{empty legend}
\addlegendentry{A. \Cref{alg:sf}}
 \addlegendimage{empty legend}
\addlegendentry{B. \Cref{alg:sf} $\setminus$ \EIGOPT}
 \addlegendimage{empty legend}
\addlegendentry{C. \EIGOPT}
 \addlegendimage{empty legend}
\addlegendentry{D. $H_r^{j}(\mu)$ evaluations}
 \addlegendimage{empty legend}
\addlegendentry{E. Linear programs}
\end{axis}
\end{tikzpicture}
	\caption{\emph{(Concerns Example 2)}\: Computational time (\CT, in seconds) for \Cref{alg:sf} and its key components.}
	\label{fig2.2}
\end{figure}

In \Cref{fig2.2}, we report the distribution of computation time among the components of \Cref{alg:sf} for constructing the subspace ${\mathcal V}$ of dimension $\sd = 32$ and the reduced matrix-valued function $A^V(\mu)$. Bar A shows the total runtime of \Cref{alg:sf} until convergence with tolerance $\varepsilon=10^{-8}$. Bar B corresponds to lines \ref{alg:large_eigs}–\ref{alg:sf:line:orth} of \Cref{alg:sf}, including updates of $A^{V_j}(\mu)$, which take only a small portion of total time. Bar C shows time inside $\EIGOPT$ (line \ref{alg:sub_prob} of \Cref{alg:sf}). Bar D isolates the time for evaluating $H_r^{(j)}(\mu)$ in $\EIGOPT$, while Bar E shows how much of that is due to solving the linear programs in line \ref{alg:lb:lp} of \Cref{alg:lb}. The evaluation of $H_r^{(j)}(\mu)$ takes considerable runtime, mostly due to linear program solutions, suggesting that evaluating the upper bound $\lambda^{\calV_j}_{\min}(\mu)$ is cheaper than computing the lower bound $\lambda^{(j)}_{\LB}(\mu)$. As Bars C and D show, evaluations of $H_r^{(j)}(\mu)$ account for more than half of \EIGOPT's runtime. However, as the number of parameters grows, other internal tasks in \EIGOPT may become significant.

\subsection{Test cases from parametrized {\PDE}s}

In this section, we examine non-Hermitian matrices. We provide an instance with $5$ parameters where we implement the hybrid approach outlined in \Cref{sec:NE:exm1bis}. Following this, we apply our method to approximate the norm of the resolvent across a compact region of the complex plane. The error we consider here is
\begin{equation}\label{eqn:err:sv}
    \calN_r(\mu)\;\vcentcolon=\;\frac{\sigma_{\min}^{\calV}(\mu)-\sigma_{\min}(\mu)}{\sigma_{\min}^{\calV}(\mu)}.
\end{equation}

\subsubsection{Heston model}
This test example pertains to the parametric semi-discretized form of the Heston equation; see \cite{Hes93} for details on the model and \cite{HouF08} for the employed discretization scheme. Consequently, one has to deal with the following $5$-parameters dependent matrix
\begin{align}\label{eqn:H:aff}
\begin{aligned}
	A(\mu) \;=&\; \mu_1 \mu_2 A_1+A_2+\mu_2A_3+\mu_1^2A_4+\mu_3\mu_4A_5+\mu_3A_6, \\
 \mu \in \calD \;=&\; [0.18, 0.4] \times [0.001, 0.2]\times[1.2,3]\times[0.08,0.15]\times[0.21,0.9],
\end{aligned}
\end{align}
where $A_i\in\R^{\sn\times\sn}$, for $i\in\{1,\ldots,6\}$ and $\sn = 10^4$, is sparse and non-Hermitian. We run \Cref{alg:sv} with $\varepsilon=10^{-4}$ allowing \EIGOPT to evaluate the objective function $S^j_r(\mu)$ (defined in (\ref{eqn:sure:err:sv_rel}))
only $100$ times for each $j$. To address the limitations of performing at most $100$ iterations within \EIGOPT, we employ the hybrid \EIGOPT-\SSCM approach as outlined in \Cref{sec:NE:exm1bis}. Consequently, after the convergence of \Cref{alg:sv}, we enhance our subspaces by evaluating them on a Cartesian product grid, which is constructed by sampling $6$ Chebyshev points in each parameter axis. The algorithm concludes by producing two subspaces, $\calV$ and $\calU$, each with dimension $\sd=110$. We validate our results by measuring the error defined in \eqref{eqn:err:sv} across a randomly generated grid comprising $7^5=16807$ elements within the domain $\calD$. The maximum error value observed from \eqref{eqn:err:sv} was $2\cdot10^{-4}$, which exceeds the prescribed target accuracy; however, it remains highly precise across the entire domain. Notably, in just $314$ instances out of $16807$, the error surpassed the specified error tolerance $\varepsilon =10^{-4}$.
\begin{figure}[t]
	\centering
	    \begin{tikzpicture}
\begin{axis}[
        width=1.2*\imageWidth,
	height=\imageHeight,
 scale only axis,
	scaled ticks=false,
	grid=both,
	grid style={line width=.1pt, draw=gray!10},
	major grid style={line 
        width=.2pt,draw=gray!50},
	axis lines*=left,
	axis line style={line width=\lineWidth},
    enlargelimits=0.10,
    x=1.5cm,
    legend style={%
		legend cell align=left,
		font=\small,
		draw=white!15!black,
        minimum height=.7cm,
    at={(1.4,1)},
      anchor=north},
      ymin=0,
      ymax=1000,
    ylabel={\CT (s)},
    axis background/.style={fill=white},
    symbolic x coords={A,B,C,D,E},
    xtick={A,B,C,D,E},
    bar width=25pt,
    ]
\addplot[ybar,fill=mycolor1,forget plot] coordinates {(A,928)};
\addplot[ybar,fill=mycolor3,forget plot] coordinates {(B,550)};
\addplot[ybar,fill=mycolor4,forget plot] coordinates {(C,160)};
\addplot[ybar,fill=mycolor2,forget plot] coordinates {(D,351)};

 \addlegendimage{empty legend}
\addlegendentry{A. \EIGOPT-\SSCM}
 \addlegendimage{empty legend}
\addlegendentry{B. \EIGOPT }
 \addlegendimage{empty legend}
\addlegendentry{C. $S_r^{j}(\mu)$ calls in \EIGOPT}
 \addlegendimage{empty legend}
 \addlegendentry{D. \SSCM}

\end{axis}
\end{tikzpicture}
	\caption{\emph{(Heston model)}\: Computational time (\CT, in seconds) for \Cref{alg:sv} with \SSCM and its key components.}
	\label{fig2.2:H}
\end{figure}
In \Cref{fig2.2:H}, we present the distribution of computational time among the ingredients of the hybrid \EIGOPT-\SSCM approach. It is noteworthy that, as indicated during the introduction of \EIGOPT, the proportion of the evaluation time taken by the computation of $S^{j}_r(\mu)$ inside \EIGOPT decreases significantly as the number of parameters increases. This is illustrated by comparing with the yellow and green bars for $p=1$ in \Cref{fig2.2}. 
Therefore, it is essential to restrict the maximum number of iterations \EIGOPT conducts when $p>2$.

\subsubsection{Black-Scholes test problem}
We next consider a model derived from the semi-discretized Black-Scholes operator \cite{BlaS73}, using the scheme from \cite{HouW09}. The parameters $\mu = (\sigma, r)$ represent volatility and interest rate, respectively, and the matrix
dependent on these parameters is
\begin{equation}\label{eqn:BS:aff}
	A(\mu) \;\vcentcolon=\; \frac{1}{2}\sigma^2 A_1 + r A_2, 
	\quad\quad \mu \in \calD := [0.05, 0.25] \times [10^{-3}, 2\times 10^{-2}],
\end{equation}
where $A_1, A_2\in\R^{\sn\times\sn}$, $\sn = 2 \times 10^4$, are sparse and non-Hermitian. Running \Cref{alg:sv} with $\varepsilon=10^{-4}$ and \EIGOPT with $1000$ as the maximum number of iterations for the inner optimization yields subspaces ${\mathcal V}$ and ${\mathcal U}$ of dimension $\sd=9$. \Cref{fig8.3} confirms that the relative error \eqref{eqn:err:sv} stays below the tolerance across $\calD$; to be precise, the maximum relative error over a discrete domain constructed using the Cartesian product of $40$ Chebyshev points in each parameter direction is $4.3\cdot10^{-5}$.
\begin{figure}[t]
	\centering{
     {\input{Example_BS_Plot_1.tex}}
   	}
	\caption{Black-Scholes example \cite{HouW09}; $A(\mu)\in\R^{\sn\times \sn}$, $\sn=2\cdot 10^4$, is sparse and non-Hermitian; 
	the reduced matrix $A^{V}_{\rm{R}}(\mu)\in\R^{\sn\times\sd}$ has $\sd=9$. Approximation error \eqref{eqn:err:sv} over $\calD$
	is illustrated where the color-bar is in $\log_{10}$ scale. Red crosses are the selected interpolation points by \Cref{alg:sv}.}
	\label{fig8.3}
\end{figure}

\subsection{Approximation of the Pseudospectra}
We now apply \Cref{alg:sv} to approximate the pseudospectra of a matrix $\PSm\in {\mathbb C}^{\sn\times \sn}$. 
The $\epsilon$-pseudospectrum \cite{TreE05} of $\PSm$ is defined by
\begin{equation}
    \sigma_{\epsilon}(\PSm)
    		\;\vcentcolon=\;
		\left\{\lapVar\in{\mathbb C}\; \left|\; \|(\lapVar I- \PSm)^{-1}\|\le \frac{1}{\epsilon}  \right. \right\} ,
\end{equation}
and contains all eigenvalues of matrices within a distance $\epsilon$ of $\PSm$ in 2-norm. 
Pseudospectra are useful in analyzing transient dynamics, contour integration, and more 
\cite{GugLN20,GugLM21,GugM23,Tre99,TreE05,GL25}. Computing them efficiently, particularly for large matrices, remains a challenge. 
The popular tool \EIGTOOL \cite{eigtool} 
to compute $ \sigma_{\epsilon}(\PSm)$ is based on evaluating $\|(\lapVar I- \PSm)^{-1}\|$ over a grid and interpolation, 
but is computationally expensive. Alternative approaches include \SSCM \cite{Sir19} and low-rank ODE-based methods (see e.g. [Cha.~3]\cite{bookGL25}).

We target efficient approximation of $\|(\lapVar I- \PSm)^{-1}\|$ over a compact domain $\calD\subset {\mathbb C}$.
Since this equals $1/\sigma_{\min}(\lapVar I - \PSm)$, the task becomes approximating the smallest singular value 
of the matrix
\begin{equation}\label{eqn:PS:ex}
    A(\mu)\; = \; 
    \PSm - \real(\lapVar)I - \imag(\lapVar)\ri I,\quad
     \mu = [\real(\lapVar), \imag(\lapVar)], \quad \lapVar \in\calD.
\end{equation}

Here we experiment with $\PSm$ from the Black-Scholes matrix in \eqref{eqn:BS:aff}, setting $\sn=10^3$ and using the parameter values $\sigma = 0.1$ and $r = 0.01$. We run \Cref{alg:sv} employing a tolerance level of $\varepsilon=10^{-6}$ with a cap of $10^3$ iterations for the inner optimization utilizing \EIGOPT. The algorithm constructs subspaces $\calV$ and $\calU$ with dimension $\sd=36$, and \Cref{figPS_BSa} demonstrates that the relative error in \eqref{eqn:err:sv} stays below $\varepsilon$ across $\calD$. In \Cref{figPS_BSb}, the issue highlighted in \cref{it3} regarding the application of \Cref{alg:sf} to the Hermitian problem $A(\mu)^*A(\mu)$ is emphasized. For the creation of \Cref{figPS_BSb}, we compute the maximum of 
the surrogate error $H_r^{(j)}(\mu)$
at every iteration $j$ on a discrete grid $\Xi$ formed from the Cartesian product of $40$ parameters sampled in each direction. The surrogate error becomes stagnant due to the inadequacy of linear programming in achieving an accurate approximation of the smallest eigenvalue.

\begin{figure}[t]
	\centering
	\subfigure[Approximation error \eqref{eqn:err:sv} over $\calD$ with color-bar in $\log_{10}$ scale. Red crosses are the selected interpolation points by \Cref{alg:sv}.]{
		\input{Example_BS_PS_Plot_1.tex}
   		\label{figPS_BSa}
   	}
    \hfil
	\subfigure[Decay of $\max_{\mu\in\Xi} H_r^{(j)}(\mu)$ with respect to the iteration
     counter $j$ of \Cref{alg:sf} for the problem $A(\mu)^*A(\mu)$.]{
%
\begin{tikzpicture}

\begin{axis}[%
	width=0.9*\imageWidth,
	height=\imageHeight,
	scale only axis,
	scaled ticks=false,
	grid=both,
	grid style={line width=.1pt, draw=gray!10},
	major grid style={line width=.2pt,draw=gray!50},
	axis lines*=left,
	axis line style={line width=\lineWidth},
xmin=0,
xmax=200,
xlabel style={font=\color{white!15!black}},
xlabel={$j$},
ymode=log,
ymin=1e-1,
ymax=1e11,
yminorticks=true,
ylabel style={font=\color{white!15!black}},
	axis background/.style={fill=white},
	legend style={%
		legend cell align=left, 
		align=left, 
		font=\tiny,
		draw=white!15!black,
		at={(0.99,0.9)},
		anchor=north east,},
]

\addplot [color=mycolor1, line width=\lineWidth, mark=none]
  table[row sep=crcr]{%
1	1841105.26870004\\
2	1938396.24556671\\
3	7043960.67101256\\
4	9914341.67766001\\
5	22216760.1629291\\
6	137860934.403098\\
7	275743963.675959\\
8	197813852.649228\\
9	259513531.985106\\
10	1239404245.3958\\
11	4538050153.53286\\
12	1271744512.37168\\
13	9672313655.94782\\
14	2388960592.72903\\
15	34423414703.71\\
16	20092237681.4896\\
17	1106145236.59079\\
18	11592513899.1011\\
19	1369670130.09945\\
20	692904697.310413\\
21	3630192.18924287\\
22	1102100.32211277\\
23	148785.674754204\\
24	142253.732476453\\
25	21880.1167600529\\
26	11699.3566686093\\
27	8232.12062869933\\
28	18374.2812677719\\
29	13918.2971004282\\
30	17031.6626781608\\
31	14107.452105026\\
32	9410.28512568494\\
33	4180.603772207\\
34	3962.41394487089\\
35	993.898718318769\\
36	737.571450138377\\
37	613.411794888586\\
38	1246.71475512234\\
39	383.809313200651\\
40	343.234031657879\\
41	216.666501135391\\
42	338.39621613203\\
43	163.891371813323\\
44	81.6962698792102\\
45	68.8193624175377\\
46	34.2789785273907\\
47	17.3577578072932\\
48	16.6700090886326\\
49	11.9393370876439\\
50	11.2470102957261\\
51	7.13039204692001\\
52	7.07966129549421\\
53	6.27321199765106\\
54	4.11957954525154\\
55	3.68839310568154\\
56	1.55940076647419\\
57	1.12618067245773\\
58	1.52550706846464\\
59	1.11733709367158\\
60	1.28441366742704\\
61	1.52730427409704\\
62	1.33657452996772\\
63	1.25955981083949\\
64	1.16729626558839\\
65	0.959562468430012\\
66	1.34340749998318\\
67	1.36305910433263\\
68	1.01894503908142\\
69	1.18817719577042\\
71	1.1129264456998\\
72	1.13699393664026\\
73	1.53243194510608\\
74	0.987100131565957\\
75	1.17920274983176\\
76	1.26541466070092\\
77	1.47306843974588\\
78	1.34260514232751\\
79	1.19921805936899\\
81	1.54738144993181\\
82	1.17079067928036\\
83	1.33864726963754\\
84	1.7264170392915\\
85	1.15931833987895\\
86	1.20149791601102\\
87	1.16152852801095\\
88	0.95261462738812\\
89	1.08191529084948\\
90	1.56058453669242\\
91	1.47982743734894\\
92	1.05862598479955\\
93	1.02553998383313\\
94	0.910544297524841\\
95	1.16289150304238\\
96	1.05156602489717\\
97	1.18803225096763\\
98	1.26536003048785\\
99	1.79742923975645\\
100	0.867813140839008\\
101	0.913550134873604\\
102	1.0384193126359\\
103	0.909275417812018\\
104	1.16830100284978\\
105	1.29965692459859\\
106	0.959197626985832\\
108	0.866421944350752\\
109	1.40506117925806\\
110	0.957597089408461\\
111	1.43318384445308\\
112	1.29185499318819\\
113	1.40750419722414\\
114	1.0532524574213\\
115	1.28367028684972\\
116	1.20890027636391\\
117	1.44628816928551\\
118	1.23907997840274\\
119	1.24215659225239\\
120	0.993920516969005\\
121	1.15406996659813\\
122	1.30966231478053\\
123	1.19912504055407\\
124	1.21928144677691\\
125	1.55566558216322\\
126	1.30671258835459\\
127	1.26250725405136\\
128	1.45124750455423\\
129	1.01413143832991\\
130	1.04942628740837\\
131	1.7312548266426\\
132	1.17166645942621\\
133	1.5726272460537\\
134	1.33907867015266\\
135	1.18797386701412\\
136	1.24676577122696\\
137	1.01216031283804\\
138	1.23308061994369\\
139	1.08138402370733\\
140	1.77090454814801\\
141	1.3667592615135\\
142	1.27773292924576\\
143	1.71309161737539\\
144	1.23111958880672\\
145	1.42303152923278\\
146	1.19412638495152\\
147	1.84906529084744\\
148	0.990161358280989\\
149	1.40922712144135\\
150	1.14581858236857\\
151	1.74171477853987\\
152	1.04893870450049\\
153	1.18875668031626\\
154	1.32140486687276\\
155	1.02596469633638\\
156	1.60373294234182\\
157	1.29612890103798\\
158	1.36754125862464\\
159	1.24181410143082\\
160	1.26602553415748\\
161	1.10464330802557\\
162	1.04631516788232\\
163	1.20589608948709\\
164	1.97762638959812\\
165	1.34767707603295\\
166	1.54582856751229\\
167	1.02280219445206\\
168	0.872396544540392\\
169	1.65151647180221\\
170	1.6832841466626\\
171	2.00490676931237\\
172	1.25570722482237\\
173	1.16829592297722\\
174	1.31782291244245\\
175	0.986668862768093\\
176	1.24384283867727\\
177	1.1849045594676\\
178	1.39451898856824\\
179	1.3455045242291\\
180	1.12048271654956\\
181	1.31744155170145\\
182	1.38246981018869\\
183	1.25968279059698\\
184	1.42585150666052\\
185	1.09910178405302\\
186	1.46855452377612\\
187	1.14662394384272\\
188	1.17030859443286\\
189	0.789954348430065\\
190	1.50153384657324\\
191	1.49033908231302\\
192	1.04686184593681\\
193	1.26385742304821\\
194	1.90511875645997\\
195	1.27212516672274\\
196	1.75132002700643\\
197	1.00808074019253\\
198	1.42291132225414\\
199	1.49852464314704\\
200	1.33457298470705\\
201	1.20794741811266\\
202	1.41241459638819\\
203	1.30109428422478\\
204	1.78140341230319\\
205	1.63514177828554\\
206	0.968348501229638\\
207	1.3183009308857\\
208	1.66299434380687\\
209	1.2556620783191\\
210	1.59082527495932\\
211	1.67046119197254\\
212	1.14906432441502\\
213	1.58141546368975\\
214	0.951005367426342\\
215	1.39958484069184\\
216	1.37119960799418\\
217	1.63647779486067\\
218	1.90669510445221\\
219	1.37179947538378\\
220	1.50973838054388\\
221	2.34513068769251\\
222	1.79319021828022\\
};
\addlegendentry{$\max_{\mu \in \Xi} H_r^{(j)}(\mu)$ (see \eqref{eqn:rel:H})}

\end{axis}
\end{tikzpicture}%
   		\label{figPS_BSb}
   	}
	
	\caption{Pseudospectrum approximation example; $A(\mu)\in\R^{\sn\times \sn}$ with $\sn=10^3$ 
	is defined in \eqref{eqn:PS:ex} using $\PSm$ from the Black-Scholes operator \eqref{eqn:BS:aff} with $\sigma = 0.1$, $r = 0.01$; 
	the reduced matrix $A^{V}_{\rm{R}}(\mu)$ obtained from \Cref{alg:sv} has $\sd=36$.}
	\label{figPS_BS}
    \end{figure}


\section{Conclusion and outlook} 
We have considered the approximation of the smallest eigenvalue 
$\lambda_{\min}(\mu)$ of a large-scale parameter-dependent Hermitian
matrix $A(\mu)$ for all $\mu$ in a compact set $\calD \subseteq {\mathbb R}^p$
by that of $V^\ast A(\mu) V$, where the columns of $V$ form an orthonormal
basis for a carefully constructed small-dimensional subspace $\calV$. The subspace $\calV$ is
constructed iteratively as follows: at every iteration, we compute the parameter value 
$\widetilde{\mu} \in {\mathbb R}^p$ where the gap between an upper bound and a lower 
bound for $\lambda_{\min}(\mu)$ is maximized, and the subspace is expanded with the 
inclusion of the eigenvectors of $A(\widetilde{\mu})$ corresponding to its smallest eigenvalues. 
The lower and upper bounds we rely on are borrowed from \cite{SirK16}. However, for the lower bound 
we prove a series of novel results that are useful to understand the general subspace framework for eigenvalue approximations, and, beyond, that aid us to formally establish the convergence of the framework in an infinite-dimensional setting, 
which constitutes another contribution of this work.
 Moreover, unlike \cite{SirK16}, which uses the parameter value that maximizes the gap
in a finite and discrete subset of $\calD$, we use the parameter value that
maximizes the gap over the continuum of the domain $\calD$. We conduct a comprehensive examination of the benefits and drawbacks associated with this methodology through various numerical experiments. In the second part, we deal with the approximation of the smallest singular 
value $\sigma_{\min}(\mu)$ of $A(\mu)$ in cases where $A(\mu)$ is not Hermitian. 
A first thought is to apply the framework for approximating the smallest eigenvalue 
to $A^\ast(\mu) A(\mu)$, but this suffers from two important drawbacks that prevent its efficiency in many scenarios, namely
increase in the number of terms in the affine decomposition due to squaring, 
and inadequacy of the lower bound from linear programs to approximate the singular value well.
Thus, we propose an interpolatory framework that operates 
directly on $A(\mu)$ to approximate its smallest singular value, overcoming the downsides
of the $A^\ast(\mu) A(\mu)$ approach. A disadvantage
of this interpolatory framework is that it does not provide a certificate for the approximation 
error of the reduced problem; 
however, numerical experiments indicate that the framework leads to accurate projected problems.

The results obtained from this study have been effectively implemented for and applied to quantum spin systems; 
we refer to \cite{ManSZ25}. Current research efforts are focused on surpassing the limitations of the linear programming approximation in the context of the smallest singular value approximation. This is vital for ensuring error certification while preserving efficiency when utilizing the subspace framework.


\appendix

\section{Lipschitz continuity of \texorpdfstring{$H^{(j)}(\mu)$}{TEXT}}\label{sec5}
This appendix is devoted to showing the uniform Lipschitz continuity of $H^{(j)}(\mu)$ with respect to $\mu$ over 
all $j$ in the infinite-dimensional setting, which is employed in the global convergence proof, that is 
the proof of \Cref{teo0}. The derivation of the uniform Lipschitz continuity of $H^{(j)}(\mu)$ 
here is rather involved, and can possibility be omitted at first by a reader who prefers to avoid technicalities. 
The structure of the appendix is as follows. In Section \ref{sec: 5.1}, we rely on the theory of 
invariant subspaces, as outlined in \cite[Cha.~5]{Stewart1990}, and exclusively work with \textit{simple} 
invariant subspaces, as defined in \cite[Cha.~5, Def.~1.2]{Stewart1990}, to show the Lipschitz 
continuity of eigenvectors associated with simple invariant subspaces. In Section \ref{sec: 5.2}, we derive 
the uniform Lipschitz continuity results for the lower bound defined in \eqref{eq:defn_LB}, and finally, 
in Section \ref{sec: 5.3}, we state the main results concerning the uniform Lipschitz continuity
of $H^{(j)}(\mu)$. 

Recall that in the infinite-dimensional setting we deal with here, $A_i:\ell^2(\N)\rightarrow\ell^2(\N)$ 
is a compact self-adjoint operator for $i=1,\ldots,\kappa$. With this framework, we can interpret 
$A(\mu)$ with the structure \eqref{int1} as an infinite-dimensional Hermitian matrix. The 
action of $A(\mu)$ over a subset of $\ell^2(\N)$ follows straightforwardly as in the finite-dimensional case.
Without loss of generality, we will always consider the eigenvectors of $A(\mu)$ as normalized. 
Below, we recall a Lipschitz continuity result, i.e. \cite[Lem.~2.1]{KanMMM18}, for the $j$-th smallest eigenvalue 
$\lambda_{j}^{\calV}(\mu)$ of $V^\ast A(\mu) V$ that will be employed in the subsequent subsections.
\begin{lemma}(Lipschitz Continuity of $\lambda^{\calV}_{j}(\mu)$)\label{lemma:lip:con:eig}
    Let $j$ be a positive integer.
    There exists a positive real scalar $\gamma_\lambda$ such that for every subspace $\calV$ of $\ell^2(\N)$
    such that $\text{dim} \, \calV \geq j$, we have
    \begin{equation*}
        \big| \lambda^{\calV}_{j}(\mu)-\lambda^{\calV}_{j}(\widetilde{\mu}) \big| \;\le\; 
        									\gamma_{\lambda} \|\mu-\widetilde{\mu}\|\;
        \quad\; \text{for all}\;\; \mu,\widetilde{\mu}\in\calD	\:	.
    \end{equation*}
\end{lemma}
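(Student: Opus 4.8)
The plan is to reduce the claim to the standard fact that a compact self-adjoint operator depends Lipschitz-continuously (in operator norm) on the parameter $\mu$, and that eigenvalues are $1$-Lipschitz functions of the operator with respect to the operator norm; the point is that the Lipschitz constant can be taken \emph{uniform over all subspaces} $\calV$ because compression never increases the relevant perturbation. First I would fix $\mu,\widetilde{\mu}\in\calD$ and estimate $\|A(\mu)-A(\widetilde{\mu})\|$. Using the affine decomposition $A(\mu)=\sum_{i=1}^\kappa \theta_i(\mu)A_i$, we get
\[
\|A(\mu)-A(\widetilde{\mu})\|\;\le\;\sum_{i=1}^\kappa |\theta_i(\mu)-\theta_i(\widetilde{\mu})|\,\|A_i\|\;.
\]
Since each $\theta_i$ is real-analytic, hence $C^1$, on a neighbourhood of the compact set $\calD$, it is Lipschitz on $\calD$ with some constant $L_i$, so $\|A(\mu)-A(\widetilde{\mu})\|\le \big(\sum_{i=1}^\kappa L_i\|A_i\|\big)\|\mu-\widetilde{\mu}\|=:\gamma_\lambda\|\mu-\widetilde{\mu}\|$. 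Here $\gamma_\lambda$ depends only on the data $A_1,\dots,A_\kappa$ and the functions $\theta_1,\dots,\theta_\kappa$, not on $j$ or on $\calV$.

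Next I would pass from the full operator to its compression. For a matrix $V$ whose columns form an orthonormal basis of $\calV$, the operator $V^\ast A(\mu) V$ is again self-adjoint (and compact/finite-dimensional, as appropriate), and
\[
\|V^\ast A(\mu) V - V^\ast A(\widetilde{\mu}) V\|\;=\;\|V^\ast\big(A(\mu)-A(\widetilde{\mu})\big)V\|\;\le\;\|A(\mu)-A(\widetilde{\mu})\|\;\le\;\gamma_\lambda\|\mu-\widetilde{\mu}\|\,,
\]
since $\|V^\ast B V\|\le\|B\|$ for $V$ with orthonormal columns (isometry on $\calV$, projection afterwards). Finally, by Weyl's monotonicity/perturbation inequality for self-adjoint operators — equivalently the Courant–Fischer characterization of the $j$-th smallest eigenvalue, already invoked in \Cref{thm:rho0} via \cite[Thm.~4.2.11]{Horn1985} — each eigenvalue map $B\mapsto\lambda_j(B)$ is $1$-Lipschitz with respect to the operator norm, so
\[
|\lambda^{\calV}_j(\mu)-\lambda^{\calV}_j(\widetilde{\mu})|\;\le\;\|V^\ast A(\mu) V - V^\ast A(\widetilde{\mu}) V\|\;\le\;\gamma_\lambda\|\mu-\widetilde{\mu}\|\,,
\]
which is the claim. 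One should also note that $\lambda^{\calV}_j(\mu)$ is independent of the choice of orthonormal basis $V$ of $\calV$, as observed after \eqref{int4}, so the bound is well-defined on subspaces.

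The only mild subtlety — not really an obstacle — is bookkeeping in the infinite-dimensional case: one must make sure the $j$-th smallest eigenvalue $\lambda^{\calV}_j(\mu)$ is well-defined (the compression $V^\ast A(\mu)V$ of a compact self-adjoint operator is again compact self-adjoint, so its nonzero point spectrum is discrete and the ordered eigenvalue sequence makes sense, and the standing assumption that $A(\mu)$ has negative eigenvalues carries through to the relevant compressions once $\calV$ is chosen by \Cref{alg:sf}), and that Weyl's inequality is applied in the correct form for compact self-adjoint operators (the $\min$-$\max$ formula over $j$-dimensional subspaces is valid verbatim). I would state this as a remark rather than belabour it. The essential mechanism is simply: affine structure $+$ Lipschitz $\theta_i$ $\Rightarrow$ operator-norm Lipschitz continuity of $A(\mu)$; compression is a contraction in operator norm; eigenvalues are $1$-Lipschitz in operator norm.
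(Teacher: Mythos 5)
Your proposal is correct and follows essentially the same route as the paper's proof: Weyl's inequality (eigenvalues are $1$-Lipschitz in operator norm), the affine decomposition combined with the Lipschitz continuity of the analytic coefficients $\theta_i$ on the compact set $\calD$, and the contraction property of compression, $\|V^\ast A_i V\|\le\|A_i\|$, which makes the constant uniform over all subspaces $\calV$. No gaps; the remark on well-definedness in the infinite-dimensional setting is a fine, optional addition.
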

\subsection{Lipschitz continuity of eigenspaces (eigenvectors)}\label{sec: 5.1}
First, let us recall the definitions of invariant and simple invariant subspaces.
\begin{definition}[Invariant subspace]
    Let $\calX$ be any subspace of $\ell^2(\N)$, then $\calX$ is said to be an invariant subspace with respect to $A:\ell^2(\N)\rightarrow\ell^2(\N)$ if
    \begin{equation*}
       Av\in\calX,\quad\forall\, v\in \calX.
    \end{equation*}
\end{definition}
\begin{definition}[Simple invariant subspace]
    Let $\calX$ be an invariant subspace of $A$, and $X  : {\mathcal V} \rightarrow \ell^2(\N)$ be a linear 
    isometry (i.e., $X$ satisfies $X^\ast X = I$) from a subspace ${\mathcal V}$ of 
    $\ell^2(\N)$ such that $\calX = \range(X)$. Moreover, let $\calS(A)$ denote 
    the point-spectrum of the operator $A$, and $X^\bot  : {\mathcal V}^\bot \rightarrow \ell^2(\N)$
    be a linear map such that $X \oplus X^\bot$ is unitary.
    The set $\calX$ is said to be a simple invariant subspace of $A$ if 
    \begin{equation*}
			\calS(X^* A X) \cap \calS([ X^\bot ]^*A X^\bot)		\;	=	\;	\emptyset .
    \end{equation*}
\end{definition}

Any eigenspace of $A$ is also an invariant subspace. An eigenspace is simple if the eigenvalues associated with the eigenvectors forming this space are not associated with any of the eigenvectors generating the orthogonal complement to the eigenspace. The next theorem is fundamental to show that the lower bound $\lambda^{(j)}_{\LB}(\mu)$ is a 
Lipschitz continuous function. 
\begin{theorem}[Lipschitz continuity of simple invariant subspaces]\label{Cor: lipeig}
Suppose that $m$ is such that the invariant subspace $\calX(\mu)$ 
associated with the eigenvalues $\lambda_j(\mu)$, $j = 1,\ldots,m$ of $A(\mu)$ is simple for all $\mu \in \calD$.
Moreover, suppose
\begin{equation}\label{eqn:sep}
    	\Delta(\mu,\widetilde{\mu},A,\calX)
		\; \vcentcolon= 	\;
	\left|
		\lambda_m(\mu)-\lambda_{m+1}(\widetilde{\mu})
	\right|
		\;	>	\;	0,	\quad
	\text{for all}\quad \mu,\widetilde{\mu}\in \calD	\:	.
\end{equation}
Then, there exists $\gamma > 0$ such that for all $\mu , \widetilde{\mu} \in \calD$ the following hold: For any linear isometry $X(\mu) : \calV \rightarrow \ell^2(\N)$ from a subspace ${\mathcal V}$ of 
    $\ell^2(\N)$ such that $\calX(\mu) = \range(X(\mu))$, there is a linear isometry $X(\widetilde{\mu}) :  \calV \rightarrow \ell^2(\N)$ 
that satisfies $\calX(\widetilde{\mu}) = \range(X(\widetilde{\mu}))$ and
	$\|X(\widetilde{\mu})	-	X(\mu)\|		\:	\le	\:	
		\gamma	\|\widetilde{\mu}-\mu\|$.
\end{theorem}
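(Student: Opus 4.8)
\textbf{Proof plan for \Cref{Cor: lipeig}.}
The plan is to build the isometry $X(\widetilde\mu)$ from $X(\mu)$ via a Riesz-projection (spectral-projector) construction and then control the perturbation using the resolvent bound that the simplicity hypothesis and the uniform separation \eqref{eqn:sep} provide. First I would fix $\mu,\widetilde\mu\in\calD$ and recall that, since $\calX(\mu)$ is a simple invariant subspace associated with $\lambda_1(\mu),\dots,\lambda_m(\mu)$, the spectral projector onto $\calX(\mu)$ can be written as a Riesz (Dunford) integral $P(\mu)=\frac{1}{2\pi\ri}\oint_{\Gamma}(zI-A(\mu))^{-1}\,\mathrm{d}z$, where $\Gamma$ is a contour in $\C$ enclosing exactly $\lambda_1(\mu),\dots,\lambda_m(\mu)$ and no other point of $\calS(A(\mu))$. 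The compactness of $\calD$, the real analyticity (hence Lipschitz continuity, by \Cref{lemma:lip:con:eig} applied with $\calV=\ell^2(\N)$) of all the eigenvalue functions, and the strict inequality \eqref{eqn:sep} together let me choose $\Gamma$ \emph{uniformly}: there is a fixed contour (say a circle, or union of small circles) and a fixed $\delta>0$ such that $\Gamma$ separates $\{\lambda_1(\nu),\dots,\lambda_m(\nu)\}$ from the rest of the spectrum of $A(\nu)$ for \emph{every} $\nu\in\calD$, and $\mathrm{dist}(\Gamma,\calS(A(\nu)))\ge\delta$ for all $\nu$. This is where \eqref{eqn:sep} is used essentially: it guarantees that $\lambda_m$ never collides with $\lambda_{m+1}$ as the parameter varies, so a single contour works throughout $\calD$.

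Next I would estimate $\|P(\widetilde\mu)-P(\mu)\|$. Writing the two Riesz integrals over the common contour $\Gamma$ and using the resolvent identity $(zI-A(\widetilde\mu))^{-1}-(zI-A(\mu))^{-1}=(zI-A(\widetilde\mu))^{-1}\bigl(A(\widetilde\mu)-A(\mu)\bigr)(zI-A(\mu))^{-1}$, one gets
\[
  \|P(\widetilde\mu)-P(\mu)\|
  \;\le\;
  \frac{|\Gamma|}{2\pi}\,\sup_{z\in\Gamma}\|(zI-A(\widetilde\mu))^{-1}\|\,
  \sup_{z\in\Gamma}\|(zI-A(\mu))^{-1}\|\,
  \|A(\widetilde\mu)-A(\mu)\|
  \;\le\;
  \frac{|\Gamma|}{2\pi\delta^{2}}\,\|A(\widetilde\mu)-A(\mu)\|.
\]
Then, exactly as in the proof of \Cref{lemma:lip:con:eig}, expanding $A(\nu)=\sum_{i=1}^\kappa\theta_i(\nu)A_i$ and using the Lipschitz continuity of the $\theta_i$ together with the boundedness of the $\|A_i\|$, I obtain $\|A(\widetilde\mu)-A(\mu)\|\le C_A\|\widetilde\mu-\mu\|$ for a constant $C_A$ independent of $\mu,\widetilde\mu$. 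Hence $\|P(\widetilde\mu)-P(\mu)\|\le \gamma_P\,\|\widetilde\mu-\mu\|$ with $\gamma_P$ depending only on $|\Gamma|$, $\delta$, the $\gamma_i$ and the $\|A_i\|$ — all uniform over $\calD$.

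Finally I would pass from the projector bound to the isometry bound. Given the prescribed isometry $X(\mu):\calV\to\ell^2(\N)$ with $\im X(\mu)=\calX(\mu)=\im P(\mu)$, I define a candidate map onto $\calX(\widetilde\mu)$ by $Y(\widetilde\mu):=P(\widetilde\mu)X(\mu)$, and then set $X(\widetilde\mu):=Y(\widetilde\mu)\bigl(Y(\widetilde\mu)^*Y(\widetilde\mu)\bigr)^{-1/2}$, the orthonormalization (polar factor) of $Y(\widetilde\mu)$; one checks $Y(\widetilde\mu)^*Y(\widetilde\mu)=I-X(\mu)^*(P(\mu)-P(\widetilde\mu))^2X(\mu)+\dots$ is invertible once $\|P(\widetilde\mu)-P(\mu)\|<1$, so this is well defined for $\|\widetilde\mu-\mu\|$ small, and $X(\widetilde\mu)$ is an isometry with $\im X(\widetilde\mu)=\calX(\widetilde\mu)$ (here I use that $P(\widetilde\mu)$ is an orthogonal projector because $A(\widetilde\mu)$ is self-adjoint, so $\im P(\widetilde\mu)=\calX(\widetilde\mu)$). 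A short computation bounds $\|X(\widetilde\mu)-X(\mu)\|$ by a constant times $\|Y(\widetilde\mu)-X(\mu)\|=\|(P(\widetilde\mu)-P(\mu))X(\mu)\|\le\|P(\widetilde\mu)-P(\mu)\|$, giving the claimed Lipschitz estimate $\|X(\widetilde\mu)-X(\mu)\|\le\gamma\|\widetilde\mu-\mu\|$ for $\|\widetilde\mu-\mu\|$ below a threshold; for pairs $\mu,\widetilde\mu$ farther apart the bound is automatic after enlarging $\gamma$, using $\|X(\widetilde\mu)-X(\mu)\|\le 2$ and the compactness of $\calD$ (which bounds $\|\widetilde\mu-\mu\|$ from below away from $0$ on the complement of the small-distance regime). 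I expect the main obstacle to be the bookkeeping in the passage from projectors to isometries — in particular verifying that the polar-factor normalization is well defined uniformly and that the residual $O(\|P(\widetilde\mu)-P(\mu)\|^2)$ terms are genuinely controlled independently of $\mu,\widetilde\mu$ — rather than the resolvent estimate itself, which is routine once the uniform contour has been fixed using \eqref{eqn:sep}.
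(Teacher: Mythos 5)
Your proposal is correct, but it proves the theorem by a genuinely different route than the paper. The paper's proof is essentially an appeal to the perturbation theory of simple invariant subspaces: it quotes the $\sin\Theta$-type bound of Stewart--Sun and Li, obtaining $\|X(\widetilde{\mu})-X(\mu)\|\le \Delta(\mu,\widetilde{\mu},A,\calX)^{-1}\|(A(\widetilde{\mu})-A(\mu))X(\mu)\|+\calO(\|A(\widetilde{\mu})-A(\mu)\|^2)$, bounds $\Delta^{-1}$ by its maximum $\mathcal{M}$ over the compact domain, uses the affine structure and Lipschitz continuity of the $\theta_i$ exactly as in \Cref{lemma:lip:con:eig}, and then upgrades the resulting \emph{local} Lipschitz estimate (local because of the second-order remainder) to a global one by compactness of $\calD$. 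You instead work with the Riesz spectral projector, fix a single contour $\Gamma$ valid for all parameters thanks to the uniform gap, and get an \emph{exact} first-order bound $\|P(\widetilde{\mu})-P(\mu)\|\le \frac{|\Gamma|}{2\pi\delta^2}\|A(\widetilde{\mu})-A(\mu)\|$ from the resolvent identity, after which the polar-factor construction $X(\widetilde{\mu})=P(\widetilde{\mu})X(\mu)\bigl(X(\mu)^*P(\widetilde{\mu})X(\mu)\bigr)^{-1/2}$ produces the isometry explicitly; the far-apart regime is dispatched by the trivial bound $\|X(\widetilde{\mu})-X(\mu)\|\le 2$. What your route buys: no asymptotic remainder, explicit and uniform constants, a constructive choice of $X(\widetilde{\mu})$, and a cleaner local-to-global passage (the paper's "locally Lipschitz plus compactness" step is slightly delicate here since the statement quantifies over choices of bases rather than a single function). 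What it costs: you must justify that \eqref{eqn:sep}, together with continuity of the eigenvalues and compactness of $\calD$, yields a uniform interval separation so that one contour serves all $\mu$ -- but the paper's use of $\mathcal{M}=\max_{\mu,\widetilde{\mu}}\Delta^{-1}$ as the relevant "sep" rests on the same implicit uniformization, so you are not assuming more than the paper does. Two small polish points: the identity $Y(\widetilde{\mu})^*Y(\widetilde{\mu})=I-X(\mu)^*(P(\mu)-P(\widetilde{\mu}))^2X(\mu)$ is exact (the trailing "$+\dots$" is unnecessary), and you should state explicitly that the contour also stays a uniform distance from the essential spectrum point $0$ and from the positive eigenvalues, which your choice of $\Gamma$ to the left of $\min_{\mu}\lambda_{m+1}(\mu)$ indeed guarantees.
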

\begin{proof}$\:$
    This is a direct consequence of the analyticity of isolated eigenspaces (i.e., simple invariant subspaces); see \cite[Thm.~1]{GruSH22}.
\end{proof}

To be able to apply \Cref{Cor: lipeig} in our setting, we assume a separation between the $\sr$th
and $(\sr + 1)$-st smallest eigenvalues of $A(\mu)$. This assumption together with an assumption on the coarseness
of the initial points $\mu_{1,1}, \dots, \mu_{1,n}$ for \Cref{alg:sf} are formally stated next.
\begin{assumption}\label{ass:eig_sep}
The inequality
\smallskip
\begin{equation}\label{eq:eig_sep}
		\widetilde{\delta}
			\;	:=	\;
	 	\min_{\mu \in {\mathcal D}} \lambda_{\sr + 1}(\mu)
	 	\:	-	\:
		\max_{\mu \in {\mathcal D}} \lambda_{\sr}(\mu)	\;\;	>	\;\;	0
\end{equation}
\vskip -.3ex
\noindent
holds. Moreover, \Cref{alg:sf} is initialized with the multiple points 
$\mu_{1,1}, \dots , \mu_{1,\eta}$ (see \Cref{rem:initialize})  
chosen as the grid-points on a sufficiently fine uniform grid for ${\mathcal D}$.
\end{assumption}
Assuming $\ell > \sr$, we have the interpolation properties \cite[Lem.~2.3]{KanMMM18}
\[
	\lambda^{\calV_j}_{\sr}(\mu_{1,i})	\;	=	\;	\lambda_{\sr}(\mu_{1,i})
		\quad	\text{and}		\quad
	\lambda^{\calV_j}_{\sr + 1}(\mu_{1,i})	\;	=	\;	\lambda_{\sr + 1}(\mu_{1,i})
\]
for $i = 1, \dots , \eta$.
By using \Cref{lemma:lip:con:eig}, in particular the uniform Lipschitz continuity of the 
eigenvalues $\lambda^{\calV_j}_{\sr}(\mu)$ and $\lambda^{\calV_j}_{\sr+1}(\mu)$ with the Lipschitz
$\gamma_\lambda$ independent of $\calV_j$, we deduce from (\ref{eq:eig_sep}) that
\begin{equation}\label{eq:gap_projected}
		\min_{\mu \in {\mathcal D}} \lambda^{\calV_j}_{\sr + 1}(\mu)
	 		\:	-	\:
		\max_{\mu \in {\mathcal D}} \lambda^{\calV_j}_{\sr}(\mu)
			\;	\geq	\;	\widetilde{\delta}/2	\;	>	\;	0
\end{equation}
for $\mu_{1,1}, \dots , \mu_{1,\eta}$ on a sufficiently fine uniform grid, to be precise,
on a uniform grid where two adjacent points are apart from each other by 
a distance not exceeding $\, \widetilde{\delta} / (2\sqrt{p} \, \gamma_\lambda)$
(i.e., by calculations similar to those in the proof of
\Cref{lemma5.6} below
concerning the gap between $\eta^{(j)}_\ast(\mu)$ and $\lambda^{\calV_j}_1(\mu)$).
The condition in (\ref{eq:gap_projected}) in turn implies 
$\Delta(\mu,\widetilde{\mu},A^{V_j},\calX) \geq \widetilde{\delta}/2 > 0$
for all $\mu, \widetilde{\mu} \in \calD$, where $\calX(\mu)$ is the invariant subspace of $A^{V_j}$ 
spanned by its eigenvectors $w^{V_j}_1(\mu), \dots , w^{V_j}_\sr(\mu)$. Hence, we
arrive at the following result, which we will rely on in the next subsection. To deduce this
result, we note that
$\max_{\mu, \tilde{\mu} \in \calD} \Delta(\mu,\tilde{\mu},A^{V_j},\calX) \geq  \widetilde{\delta}/2$
uniformly over all $j$, and $\| V_j^\ast A_\ell V_j \| \leq \| A_\ell \|$ for $\ell = 1, \dots , \kappa$,
which implies that the Lipschitz constant $\gamma$ can be chosen independent of $j$.
Note that 
$V_j$ is a linear isometry
from ${\mathbb C}^{\sd}$ to $\ell^2({\mathbb N})$ so that $A^{V_j}(\mu) = V_j^\ast A(\mu) V_j$ is a linear map
acting on ${\mathbb C}^{\sd}$ and the invariant subspaces of $A^{V_j}(\mu)$ are subspaces of ${\mathbb C}^{\sd}$.
\begin{theorem}\label{thm:lipeig2}
Suppose that \Cref{ass:eig_sep} holds, $\ell > \sr$,
and let $\calX_j(\mu)$ denote the invariant subspace of $A^{V_j}(\mu)$ 
spanned by its eigenvectors $w^{V_j}_1(\mu), \dots , w^{V_j}_\sr(\mu)$.
There exists $\gamma_X > 0$ independent of $j$ such that for all $\mu , \widetilde{\mu} \in \calD$
the following hold:
For any matrix $X_j(\mu)$ whose orthonormal columns span $\calX_j(\mu)$, there is a matrix $X_j(\widetilde{\mu})$ 
with orthonormal columns spanning $\calX_j(\widetilde{\mu})$, that satisfies
	\begin{equation*}
		\|X_j(\widetilde{\mu})	-	X_j(\mu)\|		\:	\le	\:	
		\gamma_X	\|\widetilde{\mu}-\mu\|	\:	.
	\end{equation*}
\end{theorem}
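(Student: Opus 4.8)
The plan is to reduce Theorem~\ref{thm:lipeig2} to the already-established perturbation result for simple invariant subspaces, namely the argument underlying Theorem~\ref{Cor: lipeig}, but applied to the family of finite-dimensional matrices $A^{V_j}(\mu) = V_j^\ast A(\mu) V_j$ acting on ${\mathbb C}^{\sd}$, and crucially track the constants to see they can be chosen \emph{uniformly in $j$}. First I would verify that the hypotheses of Theorem~\ref{Cor: lipeig} are met by each $A^{V_j}(\mu)$ with $m = \sr$. Since \Cref{ass:eig_sep} holds and $\ell > \sr$, the interpolation properties $\lambda^{\calV_j}_{\sr}(\mu_{1,i}) = \lambda_{\sr}(\mu_{1,i})$ and $\lambda^{\calV_j}_{\sr+1}(\mu_{1,i}) = \lambda_{\sr+1}(\mu_{1,i})$ hold at the initialization points (by \cite[Lem.~2.3]{KanMMM18}), so by Lipschitz continuity of $\lambda^{\calV_j}_{\sr}$ and $\lambda^{\calV_j}_{\sr+1}$ with the uniform constant $\gamma_\lambda$ from \Cref{lemma:lip:con:eig}, the gap estimate \eqref{eq:gap_projected}, namely $\min_{\mu}\lambda^{\calV_j}_{\sr+1}(\mu) - \max_{\mu}\lambda^{\calV_j}_{\sr}(\mu) \ge \widetilde{\delta}/2 > 0$, is valid \emph{for every $j$} provided the initialization grid is fine enough (spacing at most $\widetilde{\delta}/(2\sqrt{p}\,\gamma_\lambda)$). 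This makes $\calX_j(\mu)$, the span of $w^{V_j}_1(\mu),\dots,w^{V_j}_\sr(\mu)$, a simple invariant subspace of $A^{V_j}(\mu)$ with separation quantity $\Delta(\mu,\widetilde{\mu},A^{V_j},\calX_j) \ge \widetilde{\delta}/2$ for all $\mu,\widetilde{\mu}\in\calD$ and all $j$.

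Next I would run through the chain of inequalities in the proof of Theorem~\ref{Cor: lipeig} with $A$ replaced by $A^{V_j}$. The key inputs are: (i) the Davis--Kahan / \cite[Rmk.~3.1]{Li1998} type bound $\|X_j(\widetilde{\mu}) - X_j(\mu)\| \le \Delta(\mu,\widetilde{\mu},A^{V_j},\calX_j)^{-1}\|(A^{V_j}(\widetilde{\mu}) - A^{V_j}(\mu))X_j(\mu)\| + \calO(\|A^{V_j}(\widetilde{\mu})-A^{V_j}(\mu)\|^2)$, for a suitable choice of orthonormal basis $X_j(\widetilde{\mu})$ of $\calX_j(\widetilde{\mu})$; (ii) the bound $\|(A^{V_j}(\widetilde{\mu})-A^{V_j}(\mu))X_j(\mu)\| \le \|A^{V_j}(\widetilde{\mu})-A^{V_j}(\mu)\| \le \sum_{l=1}^\kappa |\theta_l(\widetilde{\mu})-\theta_l(\mu)|\,\|V_j^\ast A_l V_j\| \le (\sum_{l=1}^\kappa \gamma_l \|A_l\|)\|\widetilde{\mu}-\mu\|$, using $\|V_j^\ast A_l V_j\| \le \|A_l\|$ and the Lipschitz constants $\gamma_l$ of the analytic functions $\theta_l$; and (iii) $\Delta^{-1} \le 2/\widetilde{\delta}$ uniformly. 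Combining, one gets $\|X_j(\widetilde{\mu})-X_j(\mu)\| \le (2/\widetilde{\delta})(\sum_l \gamma_l\|A_l\|)\|\widetilde{\mu}-\mu\| + \calO(\|\widetilde{\mu}-\mu\|^2)$, where the implied constant in the higher-order term is also controlled by $\widetilde{\delta}$ and $\sum_l \gamma_l\|A_l\|$, hence uniform in $j$. Finally, compactness of $\calD$ upgrades this local Lipschitz estimate to a global one with a single constant $\gamma_X$ (again via \cite[Thm.~2.1.6]{CobMN19}), and the construction of $\gamma_X$ uses only $\widetilde{\delta}$, $\kappa$, $\{\gamma_l\}$, $\{\|A_l\|\}$, and $\mathrm{diam}(\calD)$ — none of which depend on $j$.

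The main obstacle I anticipate is making the uniformity in $j$ genuinely airtight rather than merely plausible. Two subtle points need care: first, the $\calO(\|\widetilde{\mu}-\mu\|^2)$ second-order term from the perturbation expansion must be shown to have a constant bounded independently of $j$ — this follows because that constant depends on the separation $\Delta \ge \widetilde{\delta}/2$ and on $\|A^{V_j}(\cdot)\| \le \sum_l \|A_l\| \max_\mu |\theta_l(\mu)|$, both uniform, but one should quote the explicit form of the remainder in \cite[Sec.~2.3, Ch.~V]{Stewart1990} to confirm this. Second, the passage from local to global Lipschitz continuity on $\calD$ via a covering/chaining argument must be performed once with a $j$-independent local constant and a $j$-independent local radius (the radius governing validity of the second-order expansion), so that the resulting global constant is $j$-independent; this is exactly the structure of the proof of Theorem~\ref{Cor: lipeig}, so I would simply remark that the same argument applies verbatim with all occurrences of $A$ replaced by $A^{V_j}$ and all constants replaced by their $j$-uniform counterparts identified above. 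Everything else is routine bookkeeping, so the write-up should be short: state that we apply Theorem~\ref{Cor: lipeig} to $A^{V_j}$, verify the simple-invariant-subspace and separation hypotheses via \eqref{eq:gap_projected}, and observe that the constants in that theorem's proof depend on $A^{V_j}$ only through quantities bounded uniformly in $j$.
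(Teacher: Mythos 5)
Your proposal is correct and follows essentially the same route as the paper: establish the uniform gap \eqref{eq:gap_projected} from the interpolation at the initialization grid points together with the $j$-independent Lipschitz constant of \Cref{lemma:lip:con:eig}, deduce $\Delta(\mu,\widetilde{\mu},A^{V_j},\calX_j)\ge\widetilde{\delta}/2$, and then rerun the proof of \Cref{Cor: lipeig} for $A^{V_j}$, using $\|V_j^\ast A_l V_j\|\le\|A_l\|$ and compactness of $\calD$ to keep all constants independent of $j$. Your extra attention to the $j$-uniformity of the second-order remainder is a reasonable refinement of the same argument, not a different one.
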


\subsection{Uniform Lipschitz continuity of \texorpdfstring{$\lambda^{(j)}_\LB(\mu)$}{TEXT}}\label{sec: 5.2}
In this subsection, we state and prove a series of lemmas with the eventual aim of establishing
the Lipschitz continuity of the lower bound 
$\lambda^{(j)}_\LB(\mu)$ over the domain ${\mathcal D}$ with a Lipschitz constant uniform over all $j$. 

We start by establishing the uniform Lipschitz continuity of $\rho^{(j)}(\mu)^2$, recalling $\rho^{(j)}(\mu)$ 
is defined as in (\ref{eq:lbound_int}). In the proof of this Lipschitz continuity result, we benefit from the 
representation of $U_j(\mu)$ as in \eqref{eq:6} but in the infinite-dimensional setting of the form
\begin{equation}\label{eq:57}
	U_j(\mu)\;
					=	\;
		V_j W_j(\mu),\quad\text{where}\quad
		W_j(\mu) : {\mathbb C}^{\sr} \rightarrow {\mathbb C}^{\sd}, \;\; 
		W_j(\mu) x \;\vcentcolon=\;
								[w^{V_j}_1(\mu),\ldots,w^{V_j}_{\sr}(\mu)] x \;,
\end{equation}
the linear isometry $V_j : {\mathbb C}^{\sd} \rightarrow \ell^2({\mathbb N})$ is such that 
$\calV_j = \text{Im} (V_j)$, and $w^{V_j}_k(\mu)$ 
is the eigenvector of $A^{V_j}(\mu) = V_j^\ast A(\mu) V_j$ corresponding to its $k$-th smallest
eigenvalue $\lambda_k^{\calV_j}(\mu)$.

\begin{lemma}\label{Lemma2}
	Suppose that \Cref{ass:eig_sep} holds, and $\ell > \sr$.
	There exists a real positive scalar $\gamma_\rho$ such that
	\begin{equation}\label{eq:9}
		|\rho^{(j)}(\widetilde{\mu})^2-\rho^{(j)}(\mu)^2|
		\;	\le	\;	\gamma_\rho \|\widetilde{\mu}-\mu\|
		\quad \;
		\text{for all}\;\; \widetilde{\mu},\,\mu \in \calD	\;	,
	\end{equation}
	and for all $j$, where $\rho^{(j)}(\mu)$ is defined as in (\ref{eq:lbound_int}).
\end{lemma}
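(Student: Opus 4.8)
The plan is to express $\rho^{(j)}(\mu)^2$ in a form amenable to the perturbation results already established, then bound the difference $|\rho^{(j)}(\widetilde{\mu})^2-\rho^{(j)}(\mu)^2|$ term by term. Recall from \eqref{eq:rhomu} that $\rho^{(j)}(\mu)^2 = \lambda_{\max}\!\left( U_j(\mu)^\ast A(\mu)^\ast A(\mu) U_j(\mu) - \Lambda^{\calU_j}(\mu)^2 \right)$, where $\Lambda^{\calU_j}(\mu) = \diag(\lambda^{\calV_j}_1(\mu),\dots,\lambda^{\calV_j}_{\sr}(\mu))$. Using the representation \eqref{eq:57}, namely $U_j(\mu) = V_j W_j(\mu)$ with $W_j(\mu)$ the isometry collecting the eigenvectors $w^{V_j}_k(\mu)$, we can write $U_j(\mu)^\ast A(\mu)^\ast A(\mu) U_j(\mu) = W_j(\mu)^\ast \big(V_j^\ast A(\mu)^\ast A(\mu) V_j\big) W_j(\mu)$, so that $\rho^{(j)}(\mu)^2$ is the largest eigenvalue of a $\sr\times\sr$ Hermitian matrix built from (i) the bounded operator $V_j^\ast A(\mu)^\ast A(\mu) V_j$ on $\mathbb{C}^{\sd}$, (ii) the isometry $W_j(\mu)$ onto the invariant subspace $\calX_j(\mu)$ of $A^{V_j}(\mu)$, and (iii) the eigenvalues $\lambda^{\calV_j}_k(\mu)$.

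First I would establish the uniform Lipschitz continuity of each of these three ingredients, with constants independent of $j$: item (iii) is handled by \Cref{lemma:lip:con:eig}; item (ii) is exactly the content of \Cref{thm:lipeig2}, which under \Cref{ass:eig_sep} and $\ell > \sr$ provides a consistent choice of $X_j(\mu)$ (equivalently $W_j(\mu)$) with $\|W_j(\widetilde{\mu}) - W_j(\mu)\| \le \gamma_X \|\widetilde{\mu}-\mu\|$ and $\gamma_X$ independent of $j$; item (i) follows from the affine decomposition $A(\mu) = \sum_{l=1}^{\kappa}\theta_l(\mu) A_l$ together with the real analyticity (hence Lipschitz continuity on $\calD$) of the $\theta_l$, via the estimate
\[
	\|V_j^\ast A(\widetilde{\mu})^\ast A(\widetilde{\mu}) V_j - V_j^\ast A(\mu)^\ast A(\mu) V_j\|
	\;\le\; \|A(\widetilde{\mu})^\ast A(\widetilde{\mu}) - A(\mu)^\ast A(\mu)\|
	\;\le\; \big(\|A(\widetilde{\mu})\| + \|A(\mu)\|\big)\,\|A(\widetilde{\mu}) - A(\mu)\|,
\]
where $\|A(\mu)\|$ is uniformly bounded over the compact set $\calD$ and $\|A(\widetilde{\mu}) - A(\mu)\| \le \big(\sum_l \gamma_l \|A_l\|\big)\|\widetilde{\mu}-\mu\|$ as in \Cref{lemma:lip:con:eig}. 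Crucially, all bounds use $\|V_j^\ast B V_j\| \le \|B\|$, which makes the constants independent of the subspace $\calV_j$ and hence of $j$.

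Next I would combine these estimates. Writing $M^{(j)}(\mu) := W_j(\mu)^\ast \big(V_j^\ast A(\mu)^\ast A(\mu) V_j\big) W_j(\mu) - \Lambda^{\calU_j}(\mu)^2$, a telescoping argument together with the uniform boundedness over $\calD$ of each factor ($\|W_j(\mu)\| = 1$, $\|V_j^\ast A(\mu)^\ast A(\mu) V_j\| \le \|A(\mu)\|^2$, $|\lambda^{\calV_j}_k(\mu)| \le \|A(\mu)\|$) yields $\|M^{(j)}(\widetilde{\mu}) - M^{(j)}(\mu)\| \le \gamma_M \|\widetilde{\mu}-\mu\|$ with $\gamma_M$ independent of $j$. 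Then $\rho^{(j)}(\mu)^2 = \lambda_{\max}(M^{(j)}(\mu))$, and by Weyl's inequality $|\lambda_{\max}(M^{(j)}(\widetilde{\mu})) - \lambda_{\max}(M^{(j)}(\mu))| \le \|M^{(j)}(\widetilde{\mu}) - M^{(j)}(\mu)\|$, which gives \eqref{eq:9} with $\gamma_\rho := \gamma_M$.

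\textbf{Main obstacle.} The delicate point is the uniform-in-$j$ choice of the eigenvector matrix $W_j(\mu)$ (equivalently, the isometry $X_j(\mu)$): individual eigenvectors $w^{V_j}_k(\mu)$ need not be Lipschitz (or even continuous) if eigenvalues cross within the block $\lambda^{\calV_j}_1(\mu),\dots,\lambda^{\calV_j}_{\sr}(\mu)$. This is why the argument must go through the \emph{invariant subspace} $\calX_j(\mu)$ rather than individual eigenvectors, invoking \Cref{thm:lipeig2}; the quantity $\rho^{(j)}(\mu)^2$ is by construction invariant under the choice of orthonormal basis of $\calX_j(\mu)$ (it is the residual norm $\|(I - U_j(\mu)U_j(\mu)^\ast) A(\mu) U_j(\mu)\|^2$, which depends only on $\mathrm{Col}(U_j(\mu)) = \calX_j(\mu)$ as a subspace of $\calV_j$), so choosing the consistent basis from \Cref{thm:lipeig2} is legitimate. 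I would also need to double-check that the $\calO(\|\widetilde{\mu}-\mu\|^2)$ terms implicit in the perturbation-theoretic bound of \Cref{thm:lipeig2} are absorbed by compactness of $\calD$ into a genuine global Lipschitz estimate, exactly as argued there.
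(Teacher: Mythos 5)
Your proposal is correct and follows essentially the same route as the paper's proof: the representation $U_j(\mu)=V_jW_j(\mu)$, Weyl's theorem to reduce to a matrix-norm difference, a telescoping estimate combining the uniform (in $j$) Lipschitz bounds for the eigenvalues (\Cref{lemma:lip:con:eig}), the invariant-subspace basis (\Cref{thm:lipeig2}), and $\widehat{A}^{V_j}(\mu)=V_j^\ast A(\mu)^\ast A(\mu)V_j$ via $\|V_j^\ast B V_j\|\le\|B\|$, together with the key remark that $\rho^{(j)}$ is basis-independent so the consistent basis from \Cref{thm:lipeig2} may be used. The only cosmetic difference is that you apply Weyl once to the combined matrix $M^{(j)}(\mu)$, whereas the paper applies it and then splits off the $\Lambda^{\calU_j}(\mu)^2$ term separately.
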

\begin{proof}$\:$
	Using \eqref{eq:rhomu} and by Weyl's theorem (see \cite[Thm.~4.3.1]{Horn1985} for 
	the finite-dimensional case), we have
		\begin{align}\label{eq:5}
          \begin{aligned}
			&	|\rho^{(j)}(\widetilde{\mu})^2-\rho^{(j)}(\mu)^2|		
                =	\;\;	
					\left| \;
	\lambda_{\max} \left(U_j(\widetilde{\mu})^\ast A(\widetilde{\mu})^\ast A(\widetilde{\mu}) U_j(\widetilde{\mu})  
									- \Lambda^{{\mathcal U}_j}(\widetilde{\mu})^2\right)
					\right.
					\; - \;		\\
			&	\quad\quad\quad\quad\quad\quad\quad\quad\quad\quad\quad\quad\quad\quad\quad\quad	\left.
	\lambda_{\max} \left( U_j(\mu)^\ast A(\mu)^\ast A(\mu) U_j(\mu)  - \Lambda^{{\mathcal U}_j}(\mu)^2 \right)
					\; \right| \\[.4em]
			&	\quad\quad\quad	\leq	\;\;	
			\left\|
			U_j(\widetilde{\mu})^\ast A(\widetilde{\mu})^\ast A(\widetilde{\mu}) U_j(\widetilde{\mu})
					-	U_j(\mu)^\ast A(\mu)^\ast A(\mu) U_j(\mu)
			\right\|
			\;	+	\;	
			\left\| \Lambda^{{\mathcal U}_j}(\widetilde{\mu})^2- \Lambda^{{\mathcal U}_j}(\mu)^2 \right\|	\:	.
   \end{aligned}
		\end{align}
	By the definition of $\Lambda^{{\mathcal U}_j}(\mu)$ in \eqref{eq:defn_Lambda_Uj}, we have
	\begin{equation}\label{eq:4}
		\begin{aligned}
			\left\| \Lambda^{{\mathcal U}_j}(\widetilde{\mu})^2- \Lambda^{{\mathcal U}_j}(\mu)^2 \right\|	\;\; = \;\;
		&	\max_{k=1,...,\sr} \: \left| \lambda^{\calV_j}_k(\widetilde{\mu})^2-\lambda^{\calV_j}_k(\mu)^2 \right|	\\[.2em]	
	\; = \;\;	&		
			\max_{k=1,...,\sr} \:
			\left|  \left(\lambda^{\calV_j}_k(\widetilde{\mu})+\lambda^{\calV_j}_k(\mu)\right)\left(\lambda^{\calV_j}_k(\widetilde{\mu})
					-	\lambda^{\calV_j}_k(\mu)\right) 	\right|					\\[.2em]
		\;\; \le \;\;	&
			\widetilde{\gamma}
			\max_{k=1,...,\sr} \:
			\left|   \lambda^{\calV_j}_k(\widetilde{\mu})
					-	\lambda^{\calV_j}_k(\mu) 	\right|					
			\; \le  \;\;  		\gamma \,  \left\|\widetilde{\mu}-\mu \right\|  \quad\; \text{for all }\; \widetilde{\mu}, \mu \in \calD \;, 
		\end{aligned}
	\end{equation}
	for some constants $\widetilde{\gamma}$ and $\gamma$ independent of $j$,
	where 
	for the second to the last inequality and the last inequality, we have used 
	the fact that the eigenvalues are uniformly bounded for all $\mu \in \calD$ and
	the Lipschitz continuity of the eigenvalues, i.e., \Cref{lemma:lip:con:eig}, respectively.
	It follows from the representation in (\ref{eq:57}) of $U_j(\mu)$ that 
	$U_j(\mu)^\ast A(\mu)^\ast A(\mu) U_j(\mu)	\;\;	=	\;\;	
				U_j(\mu)^\ast \widehat{A}(\mu) U_j(\mu)	\;\;	=	\;\;
		W_j(\mu)^{\ast} \widehat{A}^{V_j}(\mu)W_j(\mu)$,
	with $W_j(\mu)$ denoting the linear map defined as in \eqref{eq:57}, and $
		\widehat{A}(\mu) = A(\mu)^\ast A(\mu)	\:	,	\quad
			\widehat{A}^{V_j}(\mu)=V^\ast_j \widehat{A}(\mu) V_j$.	
	Thus, we have
	\medskip
	\begin{equation}\label{eq:7}
	\hskip -1ex
		\begin{aligned}
			&	\,
	\left\| U_j(\widetilde{\mu})^\ast A(\widetilde{\mu})^\ast A(\widetilde{\mu}) U_j(\widetilde{\mu})
				-
			U_j(\mu)^\ast A(\mu)^\ast A(\mu) U_j(\mu) \right\|		\\[.5em]
			=		&\,	
	\left\|
		W_j(\widetilde{\mu})^{\ast} \widehat{A}^{V_j}(\widetilde{\mu}) W_j(\widetilde{\mu})
				-
		W_j(\mu)^{\ast} \widehat{A}^{V_j}(\mu)W_j(\mu)
	\right\|		\\[.3em]
			\le		&	\,
			\left\|
			W_j(\widetilde{\mu})^{\ast} \left\{ \widehat{A}^{V_j}(\widetilde{\mu})W_j(\widetilde{\mu})- \widehat{A}^{V_j}(\mu)W_j(\mu) \right\} 
			\right\|
							\,  +  \,
			\left\|
				\left\{ W_j(\mu) - W_j(\widetilde{\mu}) \right\}^\ast \, \widehat{A}^{V_j}(\mu)W_j(\mu)
			\right\| 		\\[.3em]
			\le	&  \,
			\left\|
				\widehat{A}^{V_j}(\widetilde{\mu})
				\left\{ W_j(\widetilde{\mu})-W_j(\mu) \right\} 
			\right\| 
					\, + \, 
			\left\|
					\left\{ \widehat{A}^{V_j}(\widetilde{\mu}) - \widehat{A}^{V_j}(\mu) \right\} W_j(\mu)
			\right\|		
				\,  +	\,
			C \left\|  W_j(\widetilde{\mu}) - W_j(\mu) \right\|		\\[.3em]
			\le		& \;\;
			2C \left\|
						W_j(\widetilde{\mu})-W_j(\mu)
					\right\|   
							\,   +  \,  
					\left\| 
						\widehat{A}^{V_j}(\widetilde{\mu}) - \widehat{A}^{V_j}(\mu)
					\right\| 	\;  ,
		\end{aligned}  
	\end{equation}
	where 
	$\, C\;\vcentcolon=\;\max_{\mu\in\calD} \| \widehat{A}(\mu) \|  \; \geq \;  \max_{\mu\in\calD} \|\widehat{A}^{V_j}(\mu)\|$, 
	and where we have used $\|W_j(\mu)\|=1,\; \forall\mu\in\calD$.
	Moreover, using \Cref{thm:lipeig2}, the steps in \cite[Lem.~$2.1$]{KanMMM18},
	and denoting with $M_j(\mu)$ a matrix representation of the linear map $W_j(\mu)$,
	there must be constants $\gamma_X$ and $\gamma_A$ independent of $j$ 

	such that
	\medskip
	\begin{equation}\label{eq:8}
		\begin{split}
			\left\|
		U_j(\widetilde{\mu})^\ast A(\widetilde{\mu})^\ast A(\widetilde{\mu}) U_j(\widetilde{\mu})
				-
		U_j(\mu)^\ast A(\mu)^\ast A(\mu) U_j(\mu)
			\right\|	
			\;\;	& \le	\;\;	\hskip 20ex		\\[.2em]
2C \left\| M_j(\widetilde{\mu})-M_j(\mu) \right\|	\, + \, \left\| \widehat{A}^{V_j}(\widetilde{\mu})-\widehat{A}^{V_j}(\mu) \right\|
			\;\;	& \le	\;\;	
			2C \gamma_X  \left\| \widetilde{\mu} - \mu \right\|  \, + \,  \gamma_A \left\| \widetilde{\mu} - \mu \right\|	\\
			\;\;	& =	\;\;
			(2C \gamma_X + \gamma_A) \left\|  \widetilde{\mu} - \mu  \right\|	\,	.	
			\hskip 5ex
		\end{split}
	\end{equation}
	We remark that the quantity $\rho^{(j)}(\widetilde{\mu})$ is independent of the orthonormal basis in the columns
	of $M_j(\widetilde{\mu})$ for the invariant subspace $A^{V_j}(\widetilde{\mu})$ spanned by its eigenvectors
	$w^{V_j}_1(\widetilde{\mu}), \dots , w^{V_j}_{\sr}(\widetilde{\mu})$. In the derivations above,  
	  $M_j(\widetilde{\mu})$ is the one satisfying 
	  $\| M_j(\widetilde{\mu}) - M_j(\mu) \| \leq \gamma_X \left\|  \widetilde{\mu} - \mu  \right\|$;
	  the existence of such $M_j(\widetilde{\mu})$ is guaranteed by \Cref{thm:lipeig2}.
	Finally, using inequalities
	\eqref{eq:4} and \eqref{eq:8} in \eqref{eq:5}, we deduce \eqref{eq:9}.
\end{proof}

The next lemma concerns the Lipschitz continuity of $\beta^{(i,j)}(\mu)$ defined in (\ref{eq:beta_i})
for $i = 1, \dots , j$ with a Lipschitz constant independent of $j$,
\begin{lemma}\label{Lemma0.1}
	Suppose that \Cref{ass:eig_sep} holds, and $\ell > \sr$.
	Then, there exists a real positive scalar $\gamma_\beta$ independent of $j$ such that 
	the scalar function $\beta^{(i,j)}(\mu)$ defined in \eqref{eq:beta_i} satisfies
	\begin{equation*}
		\left| \beta^{(i,j)}(\widetilde{\mu}) - \beta^{(i,j)}(\mu) \right|
		\;	\le	\;
		\gamma_\beta  \left\| \widetilde{\mu} - \mu \right\| \qquad
        \mbox{\rm for all} \;\; \widetilde{\mu}, \:  \mu  \in  \calD, \; \mbox{\rm for} \ i=1, \dots, j.
	\end{equation*}
\end{lemma}
\begin{proof}$\:$
It follows from the definition of $\beta^{(i,j)}(\mu)$ that

\begin{eqnarray}
&
\begin{aligned}
&	\hskip -8ex
	\left|
		\beta^{(i,j)}(\widetilde{\mu}) - 	\beta^{(i,j)} (\mu) 
	\right|	\;\;
			\le   	\\
		\:	
&	\quad\quad		\left| \:
		\lambda_{\min}
		\left(
			\left\{ \Lambda^{(i)} - \lambda^{(i)}_1 I_\ell \right\} 	- 	
			[V^{(i)}]^\ast U_j(\mu) U_j(\mu)^\ast V^{(i)}	 \left\{ \Lambda^{(i)} - \lambda^{(i)}_{\ell+1} I_\ell \right\}
		\right) \right.				\\
&					\left.		\quad\quad\quad\quad	 - \;\;
	\lambda_{\min}\left(
	\left\{ \Lambda^{(i)} - \lambda^{(i)}_1 I_\ell \right\} 	- 	
			[V^{(i)}]^\ast U_j(\widetilde{\mu}) U_j(\widetilde{\mu})^\ast V^{(i)}		
								\left\{ \Lambda^{(i)} - \lambda^{(i)}_{\ell+1} I_\ell \right\} \right) \:
		\right|	\label{eq:52}
\end{aligned}				\\
&	\hskip -19.8ex
	\le 
		\;\;	\left\|
			[V^{(i)}]^\ast \left( U_j(\widetilde{\mu}) U_j(\widetilde{\mu})^\ast
				-	U_j(\mu)U_j(\mu)^\ast \right)V^{(i)} 
			\left\{	  \Lambda^{(i)} - \lambda^{(i)}_{\ell+1} I_\ell \right\}   
			\right\|			 \label{eq:54} \\
&
		\hskip -5.5ex
	\le
		\;\;
		\left| \lambda^{(i)}_1 - \lambda^{(i)}_{l+1} \right|	
		\left\|
			W_j(\widetilde{\mu}) W_j(\widetilde{\mu})^\ast	-
			W_j(\widetilde{\mu}) W_j(\mu)^\ast	+
			W_j(\widetilde{\mu}) W_j(\mu)^\ast 	-
			W_j(\mu)W_j(\mu)^\ast
		\right\| \label{eq:55} 	\\
&	\hskip -33.7ex 
	\le	
	\;\;
	2
	\left|
		\lambda^{(i)}_1	-	\lambda^{(i)}_{l+1}
	\right|
			\left\|
				W_j(\widetilde{\mu})	-	W_j(\mu)
			\right\|
			\;	\le	\;
	\gamma_\beta
					\left\|
							\mu - \widetilde{\mu} 
					\right\|	\;		\label{eq:56}
\end{eqnarray}
\noindent
for some $\gamma_\beta$ independent of $j$ and $i$,
where from \eqref{eq:52} to \eqref{eq:54} we have used Weyl's theorem, from \eqref{eq:54} to \eqref{eq:55} the definition of $U_j(\mu)$ in \eqref{eq:57}. Finally, for the last inequality in \eqref{eq:56}, we have used \Cref{thm:lipeig2}. Letting
 $M_j(\widehat{\mu})$
denote a matrix representation of the linear map $W_j(\widehat{\mu})$ for any $\widehat{\mu} \in \calD$, we again remark that,
 $\beta^{(i,j)}(\widetilde{\mu})$ is independent of the orthonormal basis in the columns of
 $M_j(\widetilde{\mu})$ for the invariant subspace ${\mathcal W}_j(\widetilde{\mu})$
 of $A^{V_j}(\widetilde{\mu})$ spanned by its eigenvectors
 $w^{V_j}_1(\widetilde{\mu}), \dots , w^{V_j}_{\sr}(\widetilde{\mu})$.
 In the derivation above, we use $M_j(\widetilde{\mu})$ with orthonormal columns spanning 
 ${\mathcal W}_j(\widetilde{\mu})$, and satisfying
 $\| M_j(\widetilde{\mu}) - M_j(\mu) \| \leq \gamma_X \left\|  \widetilde{\mu} - \mu  \right\|$,
 whose existence is guaranteed by \Cref{thm:lipeig2}.  
\end{proof}

A big step forward to show the Lipschitz continuity of the lower bound $\lambda_{{\rm LB}}^{(j)}(\mu)$ 
is establishing the Lipschitz continuity of $\eta^{(j)}_\ast(\mu)$ defined in \eqref{eq:LP}, 
as $\lambda_{{\rm LB}}^{(j)}(\mu) = f(\eta^{(j)}_\ast(\mu))$ for the function $f$ in (\ref{eq:low_bound1}).
To this end, recall that $y^{(j)}(\mu) \in {\mathbb R}^{\kappa}$ denotes a minimizer of the linear program in (\ref{eq:LP}).
Without loss of generality, we can assume there are $\kappa$ linearly independent active constraints
at this minimizer $y^{(j)}(\mu)$ of (\ref{eq:LP}) out of $2\kappa+j$ constraints all together
\cite{MatG06}. Consequently, $y^{(j)}(\mu)\in\R^{\kappa}$ must satisfy a linear system
\begin{equation}\label{eqn:theta:LP}
    \Theta^{(j)}(\mu)y^{(j)}(\mu)\;=\;\Upsilon^{(j)}(\mu),
\end{equation}
where $\Theta^{(j)}(\mu)\in\R^{\kappa\times\kappa}$ is invertible, and each equation in (\ref{eqn:theta:LP}) 
corresponds either to an active inequality constraint of the form $\theta(\mu_i)^{\T}y\ge \lambda_i+\beta^{(i,j)}(\mu)$, 
or to a box constraint. 
To be precise, there may be more than $\kappa$ active constraints at the minimizer $y^{(j)}(\mu)$
of (\ref{eqn:theta:LP}) in which case we consider the $\kappa$ linearly independent active constraints 
such that the smallest singular value of $\Theta^{(j)}(\mu)$ in (\ref{eqn:theta:LP}) is maximized.
For the Lipschitz continuity of $\eta^{(j)}_\ast(\mu)$, we assume that
the smallest singular value $\sigma_{\min}(\Theta^{(j)}(\mu))$
of $\Theta^{(j)}(\mu)$ remains away from zero as $j \rightarrow \infty$ for all $\mu \in \calD$.
\begin{assumption}\label{ass:svals_away_zero}
There exists a real number $\zeta > 0$ such that
$
	\sigma_{\min}( \Theta^{(j)}(\mu) ) > \zeta \;\;\, 	\text{for all}\; \mu \in {\mathcal D} \, ,	\:  j \ge 1.
$
\end{assumption}
We deduce the Lipschitz continuity of $\eta^{(j)}_\ast(\mu)$ with a Lipschitz constant independent of $j$ next.
\begin{lemma}\label{Lemma1}
	Suppose that Assumptions \ref{ass:eig_sep} and \ref{ass:svals_away_zero} hold, and $\ell > \sr$.
	There exists a real scalar $\gamma_\eta > 0$ independent of $j$ such that
	the scalar function $\eta^{(j)}_\ast(\mu)$ defined in \eqref{eq:LP} satisfies
	\begin{equation*}
		|\eta^{(j)}_\ast(\widetilde{\mu})-\eta^{(j)}_\ast(\mu)|
		\;	\le	\;
		\gamma_\eta 	\|\widetilde{\mu}  -  \mu\|			\quad\;	\text{for all }\;\; \widetilde{\mu},	\mu  \in  \calD \: .
	\end{equation*}
\end{lemma}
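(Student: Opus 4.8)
The plan is to reduce the claim to three ingredients, each carrying a constant independent of $j$: Lipschitz continuity and boundedness of the objective vector $\theta(\mu)$, the Lipschitz bound on the constraint data $\beta^{(i,j)}(\mu)$ from \Cref{Lemma0.1}, and a sensitivity estimate for the optimal value of the linear program in \eqref{eq:LP} under perturbations of its objective and its right-hand side. First I would record elementary facts: since $\calD$ is compact and each $\theta_i$ is real analytic, there are $C_\theta,\gamma_\theta>0$ (independent of $j$) with $\|\theta(\mu)\|\le C_\theta$ and $\|\theta(\mu)-\theta(\widetilde\mu)\|\le\gamma_\theta\|\mu-\widetilde\mu\|$ on $\calD$; and every minimizer $y^{(j)}(\mu)$ of \eqref{eq:LP} lies in the fixed box $\mathcal B$ from \eqref{eq:LP_polytope}, so $\|y^{(j)}(\mu)\|\le C_{\mathcal B}$ for a constant $C_{\mathcal B}$ depending only on the spectra of $A_1,\dots,A_\kappa$.

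Next I would introduce $\Phi(a,b):=\min\{\theta(a)^{\T}y:\;y\in\mathcal Y^{(j)}_{\rm LB}(b)\}$, noting that $\mathcal Y^{(j)}_{\rm LB}(b)$ is always nonempty (it contains $\mathcal Y_j(b)=\mathrm{Im}(\mathcal Q_j(b))$) and compact, so $\Phi$ is well defined and $\eta^{(j)}_\ast(\mu)=\Phi(\mu,\mu)$. Splitting
$|\eta^{(j)}_\ast(\mu)-\eta^{(j)}_\ast(\widetilde\mu)|\le|\Phi(\mu,\mu)-\Phi(\widetilde\mu,\mu)|+|\Phi(\widetilde\mu,\mu)-\Phi(\widetilde\mu,\widetilde\mu)|$,
the first term has a fixed feasible set $\mathcal Y^{(j)}_{\rm LB}(\mu)\subseteq\mathcal B$ and differing objectives, so the elementary inequality $|\min f-\min g|\le\sup|f-g|$ gives $|\Phi(\mu,\mu)-\Phi(\widetilde\mu,\mu)|\le C_{\mathcal B}\|\theta(\mu)-\theta(\widetilde\mu)\|\le C_{\mathcal B}\gamma_\theta\|\mu-\widetilde\mu\|$.

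For the second term the objective $c=\theta(\widetilde\mu)$ is fixed and only the right-hand sides $\lambda^{(i)}_1+\beta^{(i,j)}(b)$ of the $j$ inequality constraints move with $b$ (the normals $\theta(\mu_i)$ and the box are frozen). The key observation is that the optimal value of an LP is a convex function of its right-hand side, with subgradient at $b$ equal to the vector $\lambda^{*}(b)$ of dual optimal multipliers of those constraints; hence
$|\Phi(\widetilde\mu,\mu)-\Phi(\widetilde\mu,\widetilde\mu)|\le\max\{\|\lambda^{*}(\mu)\|_1,\|\lambda^{*}(\widetilde\mu)\|_1\}\cdot\max_i|\beta^{(i,j)}(\mu)-\beta^{(i,j)}(\widetilde\mu)|$.
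To bound $\|\lambda^{*}(b)\|_1$ uniformly in $j$ I would use the basic-solution structure around \eqref{eqn:theta:LP}: $\lambda^{*}(b)$ is supported on at most $\kappa$ active inequality constraints, and together with the active box multipliers it forms a vector $z$ with $\Theta^{(j)}(b)^{\T}z=\theta(\widetilde\mu)$, so $\|z\|_2\le\|\Theta^{(j)}(b)^{-1}\|\,\|\theta(\widetilde\mu)\|\le\zeta^{-1}C_\theta$ by \Cref{ass:svals_away_zero}, whence $\|\lambda^{*}(b)\|_1\le\sqrt{\kappa}\,\zeta^{-1}C_\theta$. Combining with $|\beta^{(i,j)}(\mu)-\beta^{(i,j)}(\widetilde\mu)|\le\gamma_\beta\|\mu-\widetilde\mu\|$ from \Cref{Lemma0.1} yields $|\Phi(\widetilde\mu,\mu)-\Phi(\widetilde\mu,\widetilde\mu)|\le\sqrt{\kappa}\,\zeta^{-1}C_\theta\gamma_\beta\|\mu-\widetilde\mu\|$, and adding the two estimates gives the claim with $\gamma_\eta:=C_{\mathcal B}\gamma_\theta+\sqrt{\kappa}\,\zeta^{-1}C_\theta\gamma_\beta$, which is independent of $j$.

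The main obstacle, and the reason \Cref{ass:svals_away_zero} is imposed, is keeping all constants free of $j$ even though the LP at iteration $j$ has $2\kappa+j$ constraints and its optimal basis may jump as $\mu$ varies: tracking the primal solution $y^{(j)}(\mu)=\Theta^{(j)}(\mu)^{-1}\Upsilon^{(j)}(\mu)$ directly is awkward because the active set is discontinuous, so I would route the right-hand-side sensitivity through LP duality, where the relevant object is the dual multiplier, supported on only $\kappa$ coordinates and controlled in norm by the uniform lower bound $\zeta$ on $\sigma_{\min}(\Theta^{(j)}(\mu))$. A minor point to verify carefully is that the LP is feasible (hence strong duality applies) for every $b\in\calD$, which follows from $\mathcal Y_j(b)\subseteq\mathcal Y^{(j)}_{\rm LB}(b)$ and the nonemptiness of $\mathcal Y_j(b)$.
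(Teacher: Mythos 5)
Your proposal is correct and follows essentially the same strategy as the paper: the same splitting of $|\eta^{(j)}_\ast(\mu)-\eta^{(j)}_\ast(\widetilde\mu)|$ into an objective-perturbation term over a fixed feasible set and a feasible-set (right-hand-side) perturbation term with a fixed objective, the same Cauchy--Schwarz/boundedness bound for the first term, and the same two ingredients for the second term, namely the Lipschitz bound on $\beta^{(i,j)}$ from \Cref{Lemma0.1} and a multiplier-norm bound of order $C_\theta/\zeta$ coming from \Cref{ass:svals_away_zero}. The only divergence is in how the second term is packaged: you invoke convexity of the LP optimal value in the right-hand side and use dual optimal multipliers as subgradients, whereas the paper writes the KKT stationarity condition \eqref{eq:KKT}, asserts that the two problems share the same active set and multiplier vector $\mathbf m$, and compares the two basic linear systems \eqref{eq:LP_linsys} to obtain $\mathbf B\le\|\mathbf m\|\,\|\Psi(\mu)-\Psi(\widetilde\mu)\|$. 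Your duality route is slightly more robust, since it does not require the two LPs to have literally the same active constraints; on the other hand, it inherits the same delicate point as the paper's argument, namely that \Cref{ass:svals_away_zero} is formulated for the basis $\Theta^{(j)}(\mu)$ of the LP with objective $\theta(\mu)$, while the problem whose dual multipliers you must bound at $b=\mu$ has the objective $\theta(\widetilde\mu)$ over $\calY^{(j)}_{\rm LB}(\mu)$ (and one should also note that the sign-feasible dual basic solution need not correspond to the particular basis singled out in \eqref{eqn:theta:LP}); since the paper glosses over the same issue, this does not count against your proof relative to the paper's.
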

\begin{proof}$\:$
By the definition of $\eta^{(j)}_\ast(\mu)$, we have 
	\begin{equation*}
		\begin{split}
			| \eta^{(j)}_\ast(\mu) - & \eta^{(j)}_\ast(\widetilde{\mu}) |
			\;	=	\;\;
			\left| \,    \min_{y \in \calY_{{\rm LB}}^{(j)}(\mu)}  \theta(\mu)^\T y
			-
			 \min_{y \in \calY_{{\rm LB}}^{(j)} \left( \widetilde{\mu} \right)}  \theta(\widetilde{\mu})^\T y		\; \right|
			\\
			=& \;\;
			\left| \,   \min_{y \in \calY_{{\rm LB}}^{(j)}(\mu)}  \theta(\mu)^\T y
			\;\; -
			\min_{y \in \calY_{{\rm LB}}^{(j)}(\mu)}  \theta(\widetilde{\mu})^\T y
			\;\; +  
			\min_{y \in \calY_{{\rm LB}}^{(j)}(\mu)}  \theta(\widetilde{\mu})^\T y
			\;\; -
			\min_{y \in \calY_{{\rm LB}}^{(j)} \left( \widetilde{\mu} \right)}  \theta(\widetilde{\mu})^\T y	\; \right|	\\[.4em]
			\le&\;\; \underbrace{ \left| \,   \min_{y \in \calY_{{\rm LB}}^{(j)}(\mu)}  \theta(\mu)^\T y
						\;\; -
			\min_{y \in \calY_{{\rm LB}}^{(j)}(\mu)}  \theta(\widetilde{\mu})^\T y \, \right|}_{= \vcentcolon \;\mathbf{A}}
					\;\;	+	\;\;
			\underbrace{\left|\, \min_{y \in \calY_{{\rm LB}}^{(j)}(\mu)}  \theta(\widetilde{\mu})^\T y
						\;\; -
			\min_{y \in \calY_{{\rm LB}}^{(j)} \left( \widetilde{\mu} \right)}  \theta(\widetilde{\mu})^\T y	\; \right|}_{=\vcentcolon\;\mathbf{B}},	
		\end{split}
	\end{equation*}
	so we deal with the terms $\mathbf{A}$ and $\mathbf{B}$. The first of these two terms is related to the results 
	of two linear programming problems over the same feasible set but with two different objective functions, while 
	the second term is the difference between two linear programming problems with the same objective function but 
	defined over two different feasible regions.
	
	Concerning the first term, we have
	\begin{align}
		\mathbf{A}\;=	&	\;\;\;
		\left| \,   \min_{y \in \calY_{{\rm LB}}^{(j)}(\mu)}  \theta(\mu)^\T y
						\;\; -
			\min_{y \in \calY_{{\rm LB}}^{(j)}(\mu)}  \theta(\widetilde{\mu})^\T y \, 
		\right|\label{eq:60}	\\[.6em]
		\le	&	\; \; \; \; 
		\theta(\widetilde{\mu})^\T   y^{(j)}(\mu)  	-    	\theta(\mu)^\T   y^{(j)}(\mu)   \;	\label{eq:58}	\\[.6em]
		\le	&	\; \; \; \;	
		\| \theta(\widetilde{\mu}) - \theta(\mu) \| \| y^{(j)}(\mu) \|	
		\;\,	\le	\;
		\gamma_{1}  \|\widetilde{\mu} - \mu\|	\quad\;	\text{for all }\quad\tilde{\mu},\mu\in\calD		\;	\label{eq:59}	
	\end{align}
	for some constant $\gamma_1 > 0$ independent of $j$,
	where $y^{(j)}(\mu) := \arg\min_{y \in {\calY}^{(j)}_{{\rm LB}}(\mu)} \theta(\mu)^\T  y$, and, in the
	first inequality, we assume, without loss of generality,
 	\[
		\min_{y \in \calY^{(j)}_{\rm LB}(\mu)} \theta(\widetilde{\mu})^\T y \: \geq \: 
					\min_{y \in {\calY}^{(j)}_{\rm LB}(\mu)} \theta(\mu)^\T  y \; ;
	\]				
	 indeed, if the opposite inequality holds, 
	 we can pass from \eqref{eq:60} to \eqref{eq:58} by
	 replacing $y^{(j)}(\mu)$ with  a minimizer of $\theta(\widetilde{\mu})^\T  y$ 
	 over all $y \in {\calY}^{(j)}_{{\rm LB}}(\mu)$ in \eqref{eq:58}
	 and negating the right-hand side of \eqref{eq:58}. 
	 We also remark that the first inequality in (\ref{eq:59}) follows from the Cauchy–Schwarz inequality, 
	 while the second inequality in (\ref{eq:59}) is due to the analyticity of $\theta_i(\mu)$ 
	 for $i = 1, \dots, \kappa$, as well as the boundedness of $\| y^{(j)}(\mu) \|$, since $y^{(j)}(\mu)$ belongs to
	 the compact set ${\calY}^{(j)}_{{\rm LB}}(\mu)$. 
	
	Now let us consider the second term ${\mathbf B}$, which is the difference of the
	minimization problems
	\[
		\min_{y \in \calY^{(j)}_{{\rm LB}}(\mu)}\theta(\widetilde{\mu})^\T y
			\quad\quad\quad	\text{and}		\quad\quad
		\min_{y \in \calY^{(j)}_{{\rm LB}} \left( \widetilde{\mu} \right)}\theta(\widetilde{\mu})^\T y
	\]
	in absolute value. For both of these minimization problems,
	the gradient of the objective $\theta(\widetilde{\mu})$ and the gradients of the constraints 
	$\theta(\mu_1), \dots, \theta(\mu_j)$ (due to the non-box constraints), as well as
	${\mathbf e}_1, \dots , {\mathbf e}_\kappa$ (due to the box constraints)
	with respect to the optimization variable $y$ are the same. From the first-order optimality
	conditions (see, e.g., \cite[Thm.~12.1]{NocW00}), for both minimization problems, there are the same 
	indices $\ell_1, \dots, \ell_m \in \{1, \dots , j \}$, $l_1, \dots , l_k \in \{ 1, \dots , \kappa \}$,
	and the same 
	Lagrange multipliers
	$\lambda_{1}, \dots , \lambda_{m}$ all positive, $\varphi_1, \dots \varphi_{k}$ all nonzero
	such that $m \in [0, j]$, $k \in [0,\kappa]$, $m + k \geq 1$ and
	\begin{equation}\label{eq:KKT}
		\theta(\widetilde{\mu})	\;	=	\;
	\sum_{i=1}^m \lambda_i \theta(\mu_{\ell_i})	+	\sum_{i=1}^k \varphi_i {\mathbf e}_{l_i}	\:	.
	\end{equation} 
	By the complementary conditions, $\ell_i \, , \; i = 1, \dots , m \: $ and $ \: l_i \, , \; i = 1, \dots, k$ above 
	correspond to the indices of active non-box and active box constraints, respectively. In particular,
	the minimization problems have the same set of active constraints. The minimizers
	$\; \underline{y}^{(j)}(\mu) \;  :=  \;
		\arg\min_{\: y \in \calY^{(j)}_{{\rm LB}} \left( \mu \right)}\;\theta(\widetilde{\mu})^\T y \; $
	and
	$\; \underline{y}^{(j)}(\widetilde{\mu})	\; = \; 
	y^{(j)}(\widetilde{\mu})	\; := \; 
		\arg\min_{\: y \in \calY^{(j)}_{{\rm LB}} \left( \widetilde{\mu} \right)}\;\theta(\widetilde{\mu})^\T y \;$
	of these minimization problems satisfy
	\begin{equation}\label{eq:LP_linsys}
		\Phi \,  \underline{y}^{(j)}(\mu) \: = \:  \Psi(\mu)
		\quad\quad	\text{and}		\quad\quad
		\Phi \, \underline{y}^{(j)}(\widetilde{\mu}) \: = \: \Psi(\widetilde{\mu})	\:	,
	\end{equation}
	where
	\begin{equation}\label{eq:LP_linsys_coeffs}
	\begin{split}
		&	\Phi
			\;	:=	\;
		\left[
			\begin{array}{cccccc}
				\theta(\mu_{\ell_1})	&	\dots		&	\theta(\mu_{\ell_m})	&
				{\mathbf e}_{l_1}	&	\dots		&	{\mathbf e}_{l_k}
			\end{array}
		\right]^\T	\;	,	\\[.3em]
		&	\Psi(\widehat{\mu})		\;	:=	\;
		\left[
			\begin{array}{cccccc}
	\lambda^{(\ell_1)}_1  + \beta^{(\ell_1, j)}(\widehat{\mu})	&	\dots		&	\lambda^{(\ell_m)}_1  + \beta^{(\ell_m, j)}(\widehat{\mu})	
	& s_{l_1}	&	\dots		&	s_{l_k}
			\end{array}
		\right]^\T	
	\end{split}
	\end{equation}
	with $s_{l_i} = -\lambda_{\max}(A_{l_i})$ if $\varphi_{i} < 0$ and
	$s_{l_i} = \lambda_{\min}(A_{l_i})$ if $\varphi_{i} > 0$ for $i = 1, \dots , k$.
	The equality in (\ref{eq:KKT}) can be expressed as
	\begin{equation}\label{eq:multiplier_coeffs}
		\theta(\widetilde{\mu})	\;	=	\;	\Phi^T \mathbf{m}	\:	,	\quad	\text{where}	\;\;
		\mathbf{m}
			:=
		\left[
			\begin{array}{cccccc}
				\lambda_1		&	\dots		&	\lambda_m	&
				\varphi_1	&	\dots		&	\varphi_k
			\end{array}
		\right]^\T	\:	.
	\end{equation}
	As a result, $\| {\mathbf m} \| \leq \| \theta(\widetilde{\mu}) \| / \sigma_{\min}(\Phi)$, where
	$\| \theta(\widetilde{\mu}) \|$ is bounded as it is the norm of a continuous function and $\widetilde{\mu}$
	belongs to the compact domain ${\mathcal D}$, while 
	$\sigma_{\min}(\Phi) \geq \sigma_{\min}(\Theta^{(j)}(\mu)) \geq \zeta > 0$ by \Cref{ass:svals_away_zero}, 
	that is $1/ \sigma_{\min}(\Phi)$ is bounded above by $1/\zeta$, a constant independent of $j$. 
	Now it follows that
	\begin{align}\label{eq:ult_bound_onB}
	\begin{aligned}
		{\mathbf B}
			\;\;	&	=	\;\;
		\left|\, \min_{y \in \calY_{{\rm LB}}^{(j)}(\mu)}  \theta(\widetilde{\mu})^\T y
			-
			\min_{y \in \calY_{{\rm LB}}^{(j)} \left( \widetilde{\mu} \right)}  \theta(\widetilde{\mu})^\T y	\; \right|	
				\;\;		=	\;\;
		\left|
				\theta(\widetilde{\mu})^\T	 \left( 	\underline{y}^{(j)}(\mu)	-	
										 \underline{y}^{(j)}(\widetilde{\mu})	\right)		
		\right|				\\[.2em]
				\;\;	&	=		\;\;
		\left|
			 \mathbf{m}^T  \Phi \left( 	\underline{y}^{(j)}(\mu)	-	
										 \underline{y}^{(j)}(\widetilde{\mu})	\right)
		\right|		
				\;\;	=	\;\;
		\left|
			\mathbf{m}^T  \left(  \Psi(\mu) - \Psi(\widetilde{\mu})  \right)	
		\right|
			\;\;\\
           & \leq		\;\;
		\left\|
			{\mathbf m}
		\right\|
			\cdot
		\left\|
			\Psi(\mu) - \Psi(\widetilde{\mu}) 
		\right\|
			\;\;	\leq	\;\;	\gamma_2 \| \mu - \widetilde{\mu} \|  \: ,
	\end{aligned}
	\end{align}
	for a constant $\gamma_2$ independent of $j$, since $\| \mathbf{m} \|$ is bounded by a constant
	independent of $j$, whereas 
		\[ \Psi(\mu) - \Psi(\widetilde{\mu}) 	=	
			\begin{bmatrix}
					 \beta^{(\ell_1, j)}(\mu) -  \beta^{(\ell_1, j)}(\widetilde{\mu}),	&	
							\dots	\:	,		&	
					\beta^{(\ell_m, j)}(\mu) -  \beta^{(\ell_m, j)}(\widetilde{\mu}),	
										& 	0,	&	\dots		\:	,		&	0
			\end{bmatrix}\, ,\]
                so
				$\| \Psi(\mu) - \Psi(\widetilde{\mu}) \| \leq \widetilde{\gamma} \| \mu - \widetilde{\mu} \|$
				for a constant $\widetilde{\gamma}$ independent of $j$ due to \Cref{Lemma0.1}. 
	Note that in (\ref{eq:ult_bound_onB}), the third equality follows from (\ref{eq:multiplier_coeffs}),
	the fourth equality from (\ref{eq:LP_linsys}), the first inequality in the last line from the
	Cauchy-Schwarz inequality.
	Thus, we conclude that
	\begin{equation*}
		\left| \eta^{(j)}_\ast(\widetilde{\mu}) - \eta^{(j)}_\ast(\mu) \right|
			\;\;	\le	\;\;
		{\mathbf A} + {\mathbf B}	
			\;\;	\leq	\;\;
		(\gamma_1 + \gamma_2) \left\| \widetilde{\mu} - \mu \right\|
	\end{equation*}
	for all $\mu, \widetilde{\mu} \in \calD$, where the constant $\gamma_1 + \gamma_2$ is independent of $j$.
\end{proof}

\begin{lemma}\label{lemma5.6}
Suppose that Assumptions \ref{ass:eig_sep} and \ref{ass:svals_away_zero} hold, and $\ell > \sr$. In particular,
suppose that \Cref{alg:sf} is initialized with the points $\mu_{1,1}, \dots , \mu_{1,\eta}$ 
(see \Cref{rem:initialize})  on a uniform grid for ${\mathcal D}$ with two adjacent grid-points 
at a distance not greater than $\: h := \delta / (\sqrt{p} \gamma) \:$ from each other, where
$\delta := \min_{\mu \in {\mathcal D}} \: \lambda_{\sr+1}(\mu) - \lambda_1(\mu) \; > 0$ and 
$\gamma := \gamma_{\lambda} + \gamma_{\eta}$ with $\gamma_{\lambda}$ and $\gamma_\eta$
denoting the Lipschitz constants in \Cref{lemma:lip:con:eig} and \Cref{Lemma1}, respectively,
both independent of $j$. Then, letting
	\begin{equation}\label{a2}
		a^{(j)}_2(\mu)		\;	:=	\;	
		\left| \lambda_1^{\mathcal V_j}(\mu) - \eta^{(j)}_{\ast}(\mu) \right|  
		+   \sqrt{	\left| \lambda_1^{\mathcal V_j}(\mu) - \eta^{(j)}_{\ast}(\mu) \right|^2   +   4 \rho^{(j)}(\mu)^2},
	\end{equation}
	we have
	\[
		\left| \lambda_1^{\mathcal V_j}(\mu) - \eta^{(j)}_{\ast}(\mu) \right|	\; \ge \;\: \delta/2 \; > \; 0
		\quad\;\;	\text{and}		\quad\;\;\,
		a^{(j)}_2(\mu) \; \ge \;\: \delta \; > \; 0	\quad\;\;		\text{for all } \mu \in \calD	\:	
	\]
	for all $j$.
\end{lemma}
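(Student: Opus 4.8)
The plan is to establish the two inequalities separately, with the first one being the engine that drives the second. First I would focus on showing $|\lambda_1^{\mathcal V_j}(\mu) - \eta^{(j)}_{\ast}(\mu)| \geq \delta/2$ for all $\mu \in \calD$. The key observation is that the lower bound $\eta^{(j)}_\ast(\mu)$ interpolates a quantity that is separated from $\lambda_1^{\mathcal V_j}(\mu)$ at the initialization points: by part (vi) of \Cref{lem:props_eta} (specifically \eqref{eq:lbound_eta}), at each initialization point $\mu_{1,i}$ we have $\eta^{(j)}_\ast(\mu_{1,i}) \geq \lambda^{(1,i)}_{\sr+1} = \lambda_{\sr+1}(\mu_{1,i})$, while $\lambda_1^{\mathcal V_j}(\mu_{1,i}) = \lambda_1(\mu_{1,i})$ since $\calV_j$ contains the relevant eigenvector. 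Hence at $\mu = \mu_{1,i}$,
\[
	\eta^{(j)}_\ast(\mu_{1,i}) - \lambda_1^{\mathcal V_j}(\mu_{1,i})
		\;\geq\; \lambda_{\sr+1}(\mu_{1,i}) - \lambda_1(\mu_{1,i})
		\;\geq\; \delta \;>\; 0.
\]
Now I would propagate this to all $\mu \in \calD$ using Lipschitz continuity. Given an arbitrary $\mu \in \calD$, pick the nearest grid-point $\mu_{1,i}$, so $\|\mu - \mu_{1,i}\| \leq \sqrt{p}\, h / 2 \cdot$ (or more simply $\leq \sqrt{p}\,(h/2)$, bounding each coordinate by $h/2$; the exact constant just needs to make the final estimate work, so I would take $h$ so that $\|\mu-\mu_{1,i}\| \le \delta/(2\gamma)$). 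By \Cref{lemma:lip:con:eig} (with constant $\gamma_\lambda$) and \Cref{Lemma1} (with constant $\gamma_\eta$), both independent of $j$,
\[
	\left| \left(\eta^{(j)}_\ast(\mu) - \lambda_1^{\mathcal V_j}(\mu)\right) - \left(\eta^{(j)}_\ast(\mu_{1,i}) - \lambda_1^{\mathcal V_j}(\mu_{1,i})\right) \right|
		\;\leq\; (\gamma_\lambda + \gamma_\eta)\,\|\mu - \mu_{1,i}\|
		\;\leq\; \gamma \cdot \frac{\delta}{2\gamma} \;=\; \frac{\delta}{2}.
\]
Combining the two displays gives $\eta^{(j)}_\ast(\mu) - \lambda_1^{\mathcal V_j}(\mu) \geq \delta - \delta/2 = \delta/2$, which in particular yields $|\lambda_1^{\mathcal V_j}(\mu) - \eta^{(j)}_\ast(\mu)| \geq \delta/2$. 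The main obstacle here is purely bookkeeping: making sure the grid spacing $h$ stated in the hypothesis, together with the $\ell^2$-versus-$\ell^\infty$ distortion factor $\sqrt p$, really delivers $\|\mu - \mu_{1,i}\| \le \delta/(2\gamma)$; this is the kind of routine constant-chasing one must do carefully but which presents no conceptual difficulty, and I would need the box constraints in the linear program for $\eta^{(j)}_\ast$ to also include the constraints generated by all initialization points (per \Cref{rem:initialize}) so that \eqref{eq:lbound_eta} applies with $\calV_j \supseteq \calV_1$.

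For the second inequality, the bound on $a^{(j)}_2(\mu)$ is then immediate. Since $a^{(j)}_2(\mu)$ is defined as a sum of a nonnegative term $|\lambda_1^{\mathcal V_j}(\mu) - \eta^{(j)}_\ast(\mu)|$ and a square root that is at least $|\lambda_1^{\mathcal V_j}(\mu) - \eta^{(j)}_\ast(\mu)|$ (because $\rho^{(j)}(\mu)^2 \geq 0$), we get
\[
	a^{(j)}_2(\mu) \;\geq\; 2\left| \lambda_1^{\mathcal V_j}(\mu) - \eta^{(j)}_\ast(\mu) \right| \;\geq\; 2 \cdot \frac{\delta}{2} \;=\; \delta \;>\; 0.
\]
All constants appearing here ($\delta$, $\gamma_\lambda$, $\gamma_\eta$, hence $\gamma$) are independent of $j$ by the cited lemmas, so the bounds hold uniformly over all iterations $j$, as required. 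The proof essentially packages together the interpolation property of the lower bound with the uniform Lipschitz estimates established in the preceding lemmas; the only genuinely new ingredient is the realization that a sufficiently fine initialization grid forces the separation $|\lambda_1^{\mathcal V_j} - \eta^{(j)}_\ast|$ to stay bounded below, which is exactly what is needed to guarantee that the denominator in the lower-bound formula \eqref{eq:LB_nearmui} does not degenerate.
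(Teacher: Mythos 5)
Your proposal is correct and follows essentially the same route as the paper's proof: the interpolation-type bound $\eta^{(j)}_\ast(\mu_{1,i}) \ge \lambda_{\sr+1}(\mu_{1,i})$ from \eqref{eq:lbound_eta} combined with $\lambda_1^{\calV_j}(\mu_{1,i}) = \lambda_1(\mu_{1,i})$ gives the gap $\ge \delta$ at the grid points, the uniform Lipschitz constants $\gamma_\lambda$, $\gamma_\eta$ propagate this to a gap $\ge \delta/2$ at every $\mu\in\calD$ (the grid spacing $h = \delta/(\sqrt{p}\,\gamma)$ indeed yields $\|\mu-\mu_{1,c}\| \le \sqrt{p}\,h/2 = \delta/(2\gamma)$), and the bound $a^{(j)}_2(\mu) \ge 2\bigl|\lambda_1^{\calV_j}(\mu)-\eta^{(j)}_\ast(\mu)\bigr| \ge \delta$ follows since $\rho^{(j)}(\mu)^2 \ge 0$. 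Your remark that the linear programs must carry the constraints from all initialization points (per \Cref{rem:initialize}) is exactly the convention the paper uses, so no gap remains.
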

\vskip -3ex
\begin{proof}$\:$
	First observe that every $\mu \in \calD$ is at a distance of at most $(\sqrt{p} h)/2 = \delta / (2\gamma)$
	to one of the grid-points $\mu_{1,1}, \dots , \mu_{1,\eta}$. This can be seen by considering the
	hypercubes centered at the grid-points with side-lengths equal to $h$, as every $\mu \in \calD$ is in one
	of these hypercubes, and the distance from the center of this hypercube to any point in
	in the hypercube cannot exceed $(\sqrt{p} h)/2$. Take any $\widehat{\mu} \in \calD$, and let $\mu_{1,c}$
	be the grid point at a distance  from $\widehat{\mu}$ of at most $(\sqrt{p} h)/2 = \delta / (2\gamma)$, 
	where $c \in {\mathbb N}, c \in [1, \eta]$.
	
	By part \ref{item6} of \Cref{lem:props_eta}, in particular from (\ref{eq:lbound_eta}), and due
	to $\lambda_1^{\calV_j}(\mu_{1,c}) = \lambda_1	(\mu_{1,c})$ (since $\calV_j$ contains
	an eigenvector of $A(\mu_{1,c})$ corresponding to its eigenvalue $\lambda_1(\mu_{1,c})$),
	we have
	\[
		\left|  \lambda_1^{\calV_j}(\mu_{1,c})-\eta^{(j)}_{\ast}(\mu_{1,c})  \right|
			\;\;	=	\;\;
		\eta^{(j)}_{\ast}(\mu_{1,c})	-   \lambda_1^{\calV_j}(\mu_{1,c})
			\;\;		\geq		\;\;
		\lambda_{\sr + 1}(\mu_{1,c})	-	\lambda_1	(\mu_{1,c})	
			\;\;		\geq		\;\;		\delta.
	\] 
	By \Cref{lemma:lip:con:eig} and \ref{Lemma1}, the functions 
	$\lambda_1^{\calV_j}(\mu)$ and  $\eta^{(j)}_{\ast}(\mu)$ are Lipschitz continuous
	with the Lipschitz contant $\gamma_{\lambda}$ and $\gamma_\eta$, respectively,
	both independent of $j$. Using these Lipschitz continuity properties, and
	recalling $\| \widehat{\mu} - \mu_{1,c} \| \leq (\sqrt{p} h)/2 = \delta / (2\gamma)$,
	we deduce
	\begin{equation*}
	\begin{split}
		\eta^{(j)}_{\ast}(\widehat{\mu})	-   \lambda_1^{\calV_j}(\widehat{\mu})
				\;\;	&	\geq		\;\;
		(\eta^{(j)}_{\ast}(\mu_{1,c}) - \gamma_\eta \| \widehat{\mu} - \mu_{1,c} \|) 	
				-   (\lambda_1^{\calV_j}(\mu_{1,c}) + \gamma_\lambda \| \widehat{\mu} - \mu_{1,c} \|) 	\\[.3em]
				&	=		\;\;
		\left\{  \eta^{(j)}_{\ast}(\mu_{1,c})	-   \lambda_1^{\calV_j}(\mu_{1,c})  \right\}
				\:	-	\:
			(\gamma_{\lambda} + \gamma_\eta) \| \widehat{\mu} - \mu_{1,c} \|		
					\;\;	\geq		\;\;
				\delta -  \gamma \left(\frac{\delta}{2\gamma} \right)
					\;\;	=	\;\;	\delta/2	\:	.
	\end{split}
	\end{equation*}
	This shows that
	$| \eta^{(j)}_{\ast}(\mu)	-   \lambda_1^{\calV_j}(\mu) | \geq \delta/2$
	for all $\mu \in \calD$, and $a^{(j)}_2(\mu) \geq \delta$ for all $\mu \in \calD$ as claimed.
\end{proof}

We finally establish the uniform Lipschitz continuity of $\lambda^{(j)}_{\rm{LB}}(\mu)$
in the next result.
\begin{theorem} \label{teo1}
	Suppose that Assumptions \ref{ass:eig_sep} and \ref{ass:svals_away_zero} hold, and $\ell > \sr$.
	Then, there exists a positive real scalar $\gamma_{\rm{LB}}$ independent of $j$ such that 
	\begin{equation}\label{eqn:LB:LC}
		\left| 	\lambda^{(j)}_{\rm{LB}}(\widetilde{\mu})	-	\lambda^{(j)}_{\rm{LB}}(\mu)	\right|
		\;	\le	\;	
		\gamma_{\rm{LB}} \| \widetilde{\mu} - \mu \|	\quad\;	\text{for all} \;\; \widetilde{\mu} , \mu \in \calD	\:	.
	\end{equation}
\end{theorem}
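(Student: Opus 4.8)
The plan is to exploit a closed-form expression for the lower bound that is valid on all of $\calD$ (not merely near the interpolation points), and then to show that each ingredient of that expression is Lipschitz continuous with a constant independent of $j$, combining them through the elementary calculus of Lipschitz functions together with the uniform separation bounds of \Cref{lemma5.6}.

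First I would record that, for every $\mu \in \calD$ and every $j$,
\[
	\lambda^{(j)}_{\rm{LB}}(\mu) \;=\; \lambda^{\calV_j}_1(\mu) \;-\; \frac{2\,\rho^{(j)}(\mu)^2}{a^{(j)}_2(\mu)} \,,
\]
with $a^{(j)}_2(\mu)$ as in \eqref{a2}. Indeed, the proof of \Cref{lemma5.6} actually establishes the \emph{signed} inequality $\eta^{(j)}_\ast(\mu) - \lambda^{\calV_j}_1(\mu) \ge \delta/2 > 0$ for all $\mu \in \calD$, so that $\min\{\lambda^{\calV_j}_1(\mu),\eta^{(j)}_\ast(\mu)\} = \lambda^{\calV_j}_1(\mu)$ and $|\lambda^{\calV_j}_1(\mu) - \eta^{(j)}_\ast(\mu)| = \eta^{(j)}_\ast(\mu) - \lambda^{\calV_j}_1(\mu)$; substituting $\eta = \eta^{(j)}_\ast(\mu)$ into the definition \eqref{eq:low_bound1} of $f^{(j)}$ and using $\lambda^{(j)}_{\rm{LB}}(\mu) = f^{(j)}(\eta^{(j)}_\ast(\mu))$ gives the display (this is exactly \eqref{eq:LB_nearmui}, now valid globally). \Cref{lemma5.6} moreover supplies the crucial uniform lower bounds $\eta^{(j)}_\ast(\mu) - \lambda^{\calV_j}_1(\mu) \ge \delta/2$ and $a^{(j)}_2(\mu) \ge \delta$, with $\delta > 0$ independent of $\mu$ and $j$.

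Next I would collect uniform-in-$j$ Lipschitz and boundedness facts for the pieces. The function $g^{(j)}(\mu) := \eta^{(j)}_\ast(\mu) - \lambda^{\calV_j}_1(\mu)$ is Lipschitz with constant $\gamma_\eta + \gamma_\lambda$ by \Cref{Lemma1} and \Cref{lemma:lip:con:eig}, and bounded: $|\lambda^{\calV_j}_1(\mu)| \le \|A(\mu)\|$ and $|\eta^{(j)}_\ast(\mu)| = |\theta(\mu)^\T y^{(j)}(\mu)| \le \max_{\mu\in\calD,\,y\in{\mathcal B}}|\theta(\mu)^\T y|$, both bounds independent of $j$. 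The function $\rho^{(j)}(\mu)^2$ is Lipschitz with constant $\gamma_\rho$ by \Cref{Lemma2}, and $0 \le \rho^{(j)}(\mu)^2 = \|(I - U_j(\mu)U_j(\mu)^\ast)A(\mu)U_j(\mu)\|^2 \le \max_{\mu\in\calD}\|A(\mu)\|^2$ since $\|U_j(\mu)\| = 1$. Hence $s^{(j)}(\mu) := g^{(j)}(\mu)^2 + 4\rho^{(j)}(\mu)^2$ is Lipschitz with a constant independent of $j$ (square of a bounded Lipschitz function plus a Lipschitz function), and $s^{(j)}(\mu) \ge g^{(j)}(\mu)^2 \ge \delta^2/4 > 0$. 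Since $t \mapsto \sqrt t$ is Lipschitz with constant $1/\delta$ on $[\delta^2/4,\infty)$, the map $\mu \mapsto \sqrt{s^{(j)}(\mu)}$ is Lipschitz uniformly in $j$, and therefore so is $a^{(j)}_2(\mu) = g^{(j)}(\mu) + \sqrt{s^{(j)}(\mu)}$; call the constant $\gamma_a$ (independent of $j$).

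Finally I would assemble the bound: $1/a^{(j)}_2(\mu)$ is Lipschitz with constant $\le \gamma_a/\delta^2$ (reciprocal of a Lipschitz function bounded below by $\delta$) and bounded by $1/\delta$; the product $\rho^{(j)}(\mu)^2/a^{(j)}_2(\mu)$ is then Lipschitz with a $j$-independent constant (product of two bounded Lipschitz functions); and $\lambda^{(j)}_{\rm{LB}}(\mu) = \lambda^{\calV_j}_1(\mu) - 2\rho^{(j)}(\mu)^2/a^{(j)}_2(\mu)$ is a sum of Lipschitz functions, giving \eqref{eqn:LB:LC} with $\gamma_{\rm{LB}}$ depending only on $\gamma_\lambda,\gamma_\rho,\gamma_\eta,\delta$ and $\max_{\mu\in\calD}\|A(\mu)\|$, hence independent of $j$. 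I expect the main obstacle to be the careful handling of the two non-globally-Lipschitz operations $t \mapsto \sqrt t$ and $t \mapsto 1/t$: each is Lipschitz only on a set bounded away from its singular point, which is precisely what \Cref{lemma5.6} provides, and one must additionally verify that every intermediate quantity is not merely Lipschitz but also \emph{uniformly} bounded on $\calD$, since the Lipschitz constant of a product or a square involves the sup-norms of the factors. The substantive analytic work, however, has already been carried out in \Cref{Lemma2}, \Cref{Lemma1} and \Cref{lemma5.6}, so this step is essentially bookkeeping.
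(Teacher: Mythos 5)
Your proposal is correct and follows essentially the same route as the paper: both write $\lambda^{(j)}_{\rm LB}(\mu)=f^{(j)}(\eta^{(j)}_\ast(\mu))$ as a leading term minus $2\rho^{(j)}(\mu)^2/a^{(j)}_2(\mu)$, invoke \Cref{lemma:lip:con:eig}, \Cref{Lemma1}, \Cref{Lemma2} for uniform Lipschitz constants, use the lower bound $a^{(j)}_2(\mu)\ge\delta$ from \Cref{lemma5.6} together with uniform boundedness of the ingredients, and finish by Lipschitz calculus for sums, products and quotients. The only cosmetic difference is that you resolve $\min\{\lambda^{\calV_j}_1(\mu),\eta^{(j)}_\ast(\mu)\}=\lambda^{\calV_j}_1(\mu)$ via the signed inequality implicit in the proof of \Cref{lemma5.6}, whereas the paper simply treats the min of two uniformly Lipschitz functions directly; both handlings are valid.
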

\begin{proof}$\:$
Recalling $\lambda^{(j)}_{\rm{LB}}(\mu) \: = \: f^{(j)}(\eta^{(j)}_{\ast}(\mu))$, we equivalently show
\[
	\left|
		f^{(j)}(\eta^{(j)}_{\ast}(\widetilde{\mu}))	-	f^{(j)}(\eta^{(j)}_{\ast}(\mu))
	\right|
			\;	\le	\;	
	\gamma_{\rm{LB}} \| \widetilde{\mu} - \mu \|	\quad\;	\text{for all} \;\; \widetilde{\mu} , \mu \in \calD
\]
for a constant $\gamma_{\rm{LB}}$ independent of $j$.
Letting
$\;
    a_1^{(j)}(\mu)		\;	\vcentcolon=	\;		
    	\min \{ \lambda^{\calV_j}_1(\mu) , \eta^{(j)}_{\ast}(\mu) \}	\; 	,	\:
$
we have
\vskip -1.5ex
\begin{equation}\label{eq:feta}
		f^{(j)}(\eta_{\ast}^{(j)}(\mu))
				\;	=	\;
		a^{(j)}_1(\mu)
				\, - \,
		\frac{2\rho^{(j)}(\mu)^2}{a^{(j)}_2(\mu)}	\,	,
\end{equation}
\vskip -2ex
\noindent
with $a^{(j)}_2(\mu)$ as defined in \eqref{a2}.
As shown in \Cref{lemma:lip:con:eig} and \Cref{Lemma1},
the functions $\lambda_1^{\calV_j}(\mu)$ and $\eta^{(j)}_{\ast}(\mu)$ are Lipschitz continuous
with Lipschitz constants $\gamma_\lambda$ and $\gamma_\eta$, both independent of $j$. 
The function $a^{(j)}_1(\mu)$, that is the minimum of the Lipschitz continuous functions 
$\lambda_1^{\calV_j}(\mu)$ and $\eta^{(j)}_{\ast}(\mu)$, is also Lipschitz continuous
with the Lipschitz constant $\gamma_1 := \max\{ \gamma_\lambda , \gamma_\eta \}$ independent of $j$. 
 
As for the Lipschitz continuity of the second term on the right-hand side of (\ref{eq:feta}), 
observe that
\begin{equation*}
\begin{split}
	\left|
		\frac{ 2 \rho^{(j)}(\widetilde{\mu})^2 }{a^{(j)}_2(\widetilde{\mu})} 
			\: - \: 
		\frac{2\rho^{(j)}(\mu)^2}{a^{(j)}_2(\mu)}
	\right|
	\;\;	 & = 	\;\;
	\left|
		\frac{2\rho^{(j)}(\widetilde{\mu})^2 a^{(j)}_2(\mu)
				\: - \:
		2\rho^{(j)}(\mu)^2 a_2^{(j)}(\widetilde{\mu})}{a_2^{(j)}(\widetilde{\mu}) a^{(j)}_2(\mu)}
	\right|	\\
	   & \le
	\;\;
	2a^{(j)}_2(\mu)
	\frac{\left| \rho^{(j)}(\widetilde{\mu})^2 \: - \: \rho^{(j)}(\mu)^2 \right|}{a_2^{(j)}(\widetilde{\mu})a^{(j)}_2(\mu)}
			\; + \;
	2\rho^{(j)}(\mu)^2
	\frac{\left| a_2^{(j)}(\widetilde{\mu}) \: - \: a^{(j)}_2(\mu) \right|}{a_2^{(j)}(\widetilde{\mu})a^{(j)}_2(\mu)}
\end{split}
\end{equation*}
for all $\mu, \widetilde{\mu} \in \calD$. 
Recalling the definition of $a^{(j)}_2(\mu)$ in (\ref{a2}),
the existence of a constant $\overline{\delta}$ independent of $j$ such that
\begin{equation}\label{eq:12}
		\overline{\delta}  \;\; \geq \;\;  a^{(j)}_2(\mu) \quad\; \text{for all}\;\; \mu \in \calD \;	
\end{equation}
can be inferred from the following observations:
\begin{enumerate}
\item[(i)] $\lambda_1^{\calV_j}(\mu) \leq \| A(\mu) \|$ and $\rho^{(j)}(\mu) \leq \| A(\mu) \|$
by its definition in (\ref{eq:lbound_int}), where the continuous function $\| A(\mu) \|$
attains a maximum over all $\mu \in \calD$; 
\item[(ii)] $| \eta^{(j)}_\ast(\mu) | \leq  \max_{\mu \in {\mathcal D}} \| \theta(\mu) \| \, \max_{y \in {\mathcal B}} \| y \|$
by the definition of $\eta^{(j)}_\ast(\mu)$ in (\ref{eq:LP}),
where the continuous functions $\| \theta(\mu) \|$ and $\| y \|$ over $\mu \in {\mathcal D}$ and 
$y \in {\mathcal B}$, respectively, attain maxima. 
\end{enumerate}
Furthermore, 
as the functions $\lambda_1^{\calV_j}(\mu)$, $\eta^{(j)}_{\ast}(\mu)$ are Lipschitz continuous functions
by constants independent of $j$, so is 
$| 
	\lambda_1^{\mathcal V_j}(\mu) - \eta^{(j)}_{\ast}(\mu) 
|^2$.
Additionally, $\rho^{(j)}(\mu)^2$ is Lipschitz continuous with a Lipschitz constant independent of $j$ 
by \Cref{Lemma2}, and \Cref{lemma5.6} implies
 \[
 	\sqrt{ \left| \lambda_1^{\mathcal V_j}(\mu) - \eta^{(j)}_{\ast}(\mu) \right|^2 + 4 \rho^{(j)}(\mu)^2}\;\ge\;\delta/2
 \]
is positive for all $\mu \in {\mathcal D}$, so is Lipschitz continuous as a function of $\mu$  
with a Lipschitz constant independent of $j$.
Thus, we deduce the existence of a constant $\gamma_{a} > 0$ independent of $j$ such that
\begin{equation}\label{eq:10}
	\left|
		a^{(j)}_2({\widetilde{\mu}}) - a^{(j)}_2(\mu)
	\right|
			\; \leq\; 
	\gamma_a \|{\widetilde{\mu}} - \mu\|
	\quad\;	\text{for all}\;\; \widetilde{\mu} , \mu \in {\mathcal D}	\:	.
\end{equation}
Finally, by using \Cref{Lemma2}, \Cref{lemma5.6}, as well as equations \eqref{eq:12}, \eqref{eq:10}, 
we have
\begin{equation}\label{eqn:LB:LC:P2}
 	\begin{split}
	\hskip -1.5ex
	\left|
		\frac{ 2 \rho^{(j)}(\widetilde{\mu})^2 }{a^{(j)}_2(\widetilde{\mu})} 
			\: - \: 
		\frac{2\rho^{(j)}(\mu)^2}{a^{(j)}_2(\mu)}
	\right|
		\; \le \;
		&
	2a^{(j)}_2(\mu)
	\frac{\left| \rho^{(j)}(\widetilde{\mu})^2 \: - \: \rho^{(j)}(\mu)^2 \right|}{a_2^{(j)}(\widetilde{\mu})a^{(j)}_2(\mu)}
			\; + \;
	2\rho^{(j)}(\mu)^2
	\frac{\left| a_2^{(j)}(\widetilde{\mu}) \: - \: a^{(j)}_2(\mu) \right|}{a_2^{(j)}(\widetilde{\mu})a^{(j)}_2(\mu)}
		\\
		\; \le \;
		&
	2\overline{\delta} \,
	\frac{\gamma_\rho\|\widetilde{\mu} \: - \: \mu\|}{\delta^2}
		\;\;  + \;\;
	2\left( \max_{\mu\in\calD} \, \| A(\mu) \| \right)^2 \frac{\gamma_a\|\widetilde{\mu} \: - \: \mu\|}{\delta^2}\;	
		\;	\le	\;
		\gamma_2 \| \widetilde{\mu} \: - \: \mu \|,
  	\end{split}
\end{equation}
 for all $\widetilde{\mu},\mu\in\calD$ for a constant $\gamma_2$ independent of $j$. 
 Combining \eqref{eqn:LB:LC:P2} with the Lipschitz continuity of $a_1^{(j)}(\mu)$ with the Lipschitz constant $\gamma_1$,
 we conclude with \eqref{eqn:LB:LC} for the constant $\gamma_{\rm{LB}} = \gamma_1 + \gamma_2$ 
 independent of $j$.
\end{proof}

\subsection{Lipschitz continuity of \texorpdfstring{$H^{(j)}(\mu)$}{NOTEXT}}\label{sec: 5.3}
Now we are ready to state the main Lipschitz continuity result.
\begin{theorem}\label{thm:main_Lips_C}
Suppose that Assumptions \ref{ass:eig_sep} and \ref{ass:svals_away_zero} hold, and $\ell > \sr$.
Then, there exists a positive real scalar $\gamma$ independent of $j$ satisfying
	\begin{equation}\label{eq:H_Lips}
		\left|
			H^{(j)}(\widetilde{\mu})-H^{(j)}(\mu)
		\right|
		\;	\leq	\;
		\gamma\|\widetilde{\mu}-\mu\|
		\quad\;	\text{for all}\;\; \mu , \widetilde{\mu} \in \calD	\:	.
	\end{equation}
\end{theorem}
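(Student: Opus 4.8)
The plan is to obtain the Lipschitz continuity of $H^{(j)}(\mu) = \lambda^{\calV_j}_{\min}(\mu) - \lambda^{(j)}_{\rm{LB}}(\mu)$ directly from the triangle inequality together with the two Lipschitz bounds already established in the appendix. First I would write
\[
	\left| H^{(j)}(\widetilde{\mu}) - H^{(j)}(\mu) \right|
		\;\;	\leq		\;\;
	\left| \lambda^{\calV_j}_{\min}(\widetilde{\mu}) - \lambda^{\calV_j}_{\min}(\mu) \right|
		\;	+	\;
	\left| \lambda^{(j)}_{\rm{LB}}(\widetilde{\mu}) - \lambda^{(j)}_{\rm{LB}}(\mu) \right|
\]
for all $\mu, \widetilde{\mu} \in \calD$. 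The first term on the right is bounded by $\gamma_\lambda \| \widetilde{\mu} - \mu \|$ by \Cref{lemma:lip:con:eig} applied with $k = 1$ (taking $\lambda^{\calV_j}_{\min}(\mu) = \lambda^{\calV_j}_1(\mu)$), where $\gamma_\lambda$ is independent of $\calV_j$ and hence of $j$. The second term is bounded by $\gamma_{\rm{LB}} \| \widetilde{\mu} - \mu \|$ by \Cref{teo1}, whose hypotheses (\Cref{ass:eig_sep} and \Cref{ass:svals_away_zero}, and $\ell > \sr$) are exactly those assumed here, and whose constant $\gamma_{\rm{LB}}$ is again independent of $j$. Setting $\gamma := \gamma_\lambda + \gamma_{\rm{LB}}$, which is independent of $j$, gives \eqref{eq:H_Lips} immediately.

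Since essentially all the work has already been done in \Cref{lemma:lip:con:eig} through \Cref{teo1}, there is no real obstacle left at this stage; the only point requiring a word of care is that both constants in the sum are genuinely $j$-independent, so that the resulting $\gamma$ is too — but this is precisely how \Cref{lemma:lip:con:eig} and \Cref{teo1} were stated. One might also remark that \Cref{ass:eig_sep} being in force means that \Cref{alg:sf} is initialized on a sufficiently fine uniform grid, which is what makes \Cref{lemma5.6} (and through it \Cref{teo1}) applicable; no additional grid refinement beyond what \Cref{teo1} already demands is needed for the present statement. Thus the proof is a short assembly step, and the substance of the uniform Lipschitz continuity of $H^{(j)}$ lies entirely in the preceding subsections, particularly in the uniform Lipschitz bound \eqref{eqn:LB:LC} for the lower bound $\lambda^{(j)}_{\rm{LB}}(\mu)$.
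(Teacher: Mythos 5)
Your proof is correct and is essentially identical to the paper's own argument: both simply combine the triangle inequality with the uniform Lipschitz bounds for $\lambda^{\calV_j}_{\min}(\mu)$ from \Cref{lemma:lip:con:eig} and for $\lambda^{(j)}_{\rm{LB}}(\mu)$ from \Cref{teo1}, noting that both constants are independent of $j$.
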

\vskip -3.5ex
\begin{proof}$\:$
	Recalling $H^{(j)}(\mu) = \lambda^{\calV_j}_{\min}(\mu) - \lambda^{(j)}_{\LB}(\mu)$, and 
	using \Cref{lemma:lip:con:eig} and \Cref{teo1} that assert that
	the functions $\lambda^{\calV_j}_{\min}(\mu)$ and $\lambda^{(j)}_{\rm{LB}}(\mu)$ 
	are Lipschitz continuous with Lipschitz constants independent of $j$, 
	it is immediate that $H^{(j)}(\mu)$ is also Lipschitz continuous
	with a Lipschitz constant $\gamma$ independent of $j$.
\end{proof}


\section*{Acknowledgments}
The authors thank the anonymous referees, whose comments helped to improve the quality of the manuscript. Funded by Deutsche Forschungsgemeinschaft (DFG, German Research Foundation) under Germany’s Excellence Strategy - EXC 2075 – 390740016. MM acknowledges funding by the BMBF (grant no.~05M22VSA) 
and the support from the Stuttgart Center for Simulation Science (SimTech). 
EM is grateful to Gran Sasso Science Institute for hosting him, which
facilitated his contributions and involvement in this work. 
NG acknowledges that his research was supported by funds from the Italian 
MUR (Ministero dell'Universit\`a e della Ricerca) within the
PRIN 2022 Project ``Advanced numerical methods for time-dependent parametric partial differential equations with applications'' and the Pro3 joint project entitled
``Calcolo scientifico per le scienze naturali, sociali e applicazioni: sviluppo metodologico e tecnologico''.
He is also affiliated with the INdAM-GNCS (Gruppo Nazionale di Calcolo Scientifico).

\bibliographystyle{plain}
\bibliography{journalabbr,approx_smallest_eig}

\end{document}